
\documentclass[12pt,twoside,a4paper]{article}
\usepackage{amsmath,amssymb,amsthm}
\usepackage{hyperref}
\usepackage{graphicx,psfrag}
\usepackage[utf8]{inputenc}

\newtheorem{thm}{Theorem}[section]
\newtheorem{lem}[thm]{Lemma}
\newtheorem{pro}[thm]{Proposition}
\newtheorem{cor}[thm]{Corollary}

\newtheorem{definition}[thm]{Definition}

\theoremstyle{definition}

\newtheorem{exas}[thm]{Examples}

\theoremstyle{remark}
\newtheorem{rem}[thm]{Remark}

\newcommand{\R}{\mathbb{R}}
\newcommand{\Z}{\mathbb{Z}}
\newcommand{\N}{\mathbb{N}}

\newcommand{\cA}{\mathcal{A}}
\newcommand{\cB}{\mathcal{B}}
\newcommand{\cC}{\mathcal{C}}

\newcommand{\cI}{\mathcal{I}}
\newcommand{\cH}{\mathcal{H}}

\newcommand{\cM}{\mathcal{M}}

\newcommand{\cP}{\mathcal{P}}

\newcommand{\cT}{\mathcal{T}}

\newcommand{\al}{\alpha}
\newcommand{\be}{\beta}
\newcommand{\ga}{\gamma}
\newcommand{\Ga}{\Gamma}
\newcommand{\de}{\delta}
\newcommand{\De}{\Delta}
\newcommand{\ep}{\varepsilon}
\newcommand{\om}{\omega}

\newcommand{\si}{\sigma}

\newcommand{\la}{\lambda}

\renewcommand{\phi}{\varphi}

\newcommand{\dist}{\operatorname{dist}}

\newcommand{\CAT}{\operatorname{CAT}}
\newcommand{\hyp}{\operatorname{H}}

\newcommand{\ds}{\operatorname{dS}}
\newcommand{\ads}{\operatorname{AdS}}
\newcommand{\id}{\operatorname{id}}

\newcommand{\pr}{\operatorname{pr}}

\newcommand{\crr}{\operatorname{cr}}
\newcommand{\cd}{\operatorname{cd}}

\newcommand{\intr}{\operatorname{int}}

\newcommand{\reg}{\operatorname{reg}}
\renewcommand{\d}{\partial}
\newcommand{\di}{\d_{\infty}}
\newcommand{\ay}{\operatorname{aY}}
\newcommand{\rp}{\R\!\operatorname{P}}
\newcommand{\SL}{\operatorname{SL}}

\newcommand{\PSL}{\operatorname{PSL}}
\newcommand{\tr}{\operatorname{Tr}}
\newcommand{\sign}{\operatorname{sign}}

\newcommand{\co}{\operatorname{co}}

\renewcommand{\d}{\partial}
\renewcommand{\di}{\d_{\infty}}
\newcommand{\set}[2]{\{#1:\,\text{#2}\}}
\newcommand{\sm}{\setminus}
\newcommand{\sub}{\subset}

\newcommand{\ov}{\overline}
\newcommand{\wt}{\widetilde}
\newcommand{\wh}{\widehat}

\begin{document}

\title{M\"obius structures and timed causal spaces on the circle}

\author{Sergei Buyalo\footnote{This work is supported by RFBR Grant
17-01-00128a}}

\date{}
\maketitle

\begin{abstract} We discuss a conjectural duality between
hyperbolic spaces on one hand and spacetimes on the other hand,
living on the opposite sides of the common absolute. This
duality goes via M\"obius structures on the absolute,
and it is easily recognized in the classical case of symmetric
rank one spaces. In a general case, no trace of such duality
is known. As a first step in this direction, we show how M\"obius 
structures on the circle from a large class including those which 
stem from hyperbolic spaces give rise to 2-dimensional spacetimes, 
which are axiomatic versions of de Sitter 2-space, and vice versa.
The paper has two Appendices, one of which is written by V.~Schroeder.
\end{abstract}

\noindent{\small{\bf Keywords:} M\"obius structures, cross-ratio, harmonic 4-tuples,
hyperbolic spaces, spacetimes, de Sitter space}

\medskip

\noindent{\small{\bf Mathematics Subject Classification:} 51B10, 53C50}

\section{Introduction}

It is classical that the quadratic form 
$$g(v)=x^2+y^2-z^2$$
on
$\R^3$, $v=(x,y,z)\in\R^3$,
induces on any connected component of the set 
$g(v)=-1$
a Riemannian metric of the hyperbolic plane
$\hyp^2$,
while on the set 
$g(v)=1$
a Lorentz metric of de Sitter 2-space
$\ds^2$. 
The (set of lines in the) cone
$g(v)=0$
serves as the common absolute 
$S^1$
of the both
$\hyp^2$
and
$\ds^2$.
A similar picture takes place in any dimension and even for all rank one 
symmetric spaces of noncompact type.

In other words, we observe a life on the other side of the absolute
$S^1$
of
$\hyp^2$
that is de Sitter space
$\ds^2$. 
For mathematical aspects of the duality between hyperbolic spaces
$\hyp^{n+1}$
and de Sitter spacetimes
$\ds^{n+1}$
see e.g. \cite{Ge}, \cite{Yu}. Interplay between geometry of hyperbolic surfaces and 
Lorentz (2+1)-spaces is exploited in the famous paper \cite{Mes}, see
also \cite{A-S}. Duality for quadratic forms of arbitrary signature 
is discussed in \cite{Ro}. For physical aspects of de Sitter spaces
see e.g. \cite{SSV} and references therein.

In sect.~\ref{sect:other_side}, we describe this duality
in intrinsic terms. The basic feature is the canonical M\"obius structure
$M_0$
on the absolute
$S^1$,
which governs its both sides
$\hyp^2$
and
$\ds^2$.
In particular, the isometry groups of
$\hyp^2$
and 
$\ds^2$
coincide with the group of M\"obius automorphisms of
$M_0$. 
We show how to recover the hyperbolic plane
$\hyp^2$
and de Sitter 2-space
$\ds^2$
purely out of
$M_0$.

Moreover, we explain a mechanism of the passage from
$\hyp^2$
to
$\ds^2$
and back. Shortly,
$\hyp^2$
is the homogeneous space of the 
$M_0$-automorphism 
group
$\PSL_2(\R)$
over a compact elliptic subgroup isomorphic to
$S^1$,
while
$\ds^2$
is the homogeneous space of
$\PSL_2(\R)$
over a (closed) hyperbolic subgroup isomorphic to
$\R$.

This rises a bold question: Is there any life (a spacetime) on the other side
of the absolute, i.e., the boundary at infinity, of any Gromov hyperbolic space
with the same symmetry group? The main result of the paper is the answer 
``yes'' for a large class of hyperbolic spaces with the absolute
$S^1$, 
see Theorem~\ref{thm:main}.

A M\"obius structure on a set 
$X$
is a class of semi-metrics having one and the same cross-ratio
on any given ordered 4-tuple of distinct points in
$X$,
see sect.~\ref{sect:moeb}. Every hyperbolic space
$Y$
induces on its boundary at infinity
$X=\di Y$
a M\"obius structure which encodes most essential properties of
$Y$
and in a number of cases allows to recover
$Y$
completely, e.g., in the case
$Y$
is a rank one symmetric space of noncompact type, see
\cite{BS2}, \cite{BS3}. In sect.~\ref{subsect:boundary_continuous},
we explain this for the class of {\em boundary continuous} hyperbolic
spaces. In Appendix~1, sect.~\ref{sect:appendix_1},  we show that every proper
Gromov hyperbolic
$\CAT(0)$
space is boundary continuous.

We axiomatically describe a class
$\cM$
of {\em monotone} M\"obius structures on the circle
$S^1$,
see sect.~\ref{sect:monotone}. The class 
$\cM$
includes every M\"obius structure
$M$
which is induced on
$S^1$
by a hyperbolic 
$\CAT(0)$ 
surface 
$Y$
without singular points, see Theorem~\ref{thm:without_singular}.
In particular, the isometry group of
$Y$
is included in the group of M\"obius automorphisms of
$M$.
Furthermore, the canonical
M\"obius structure
$M_0$
is the most symmetric representative from
$\cM$.

On the other hand, the set 
$\ay$
of unordered pairs of distinct points on the circle
$X=S^1$
has a natural causal structure, which is independent of anything else, see
sect.~\ref{sect:other_side}. Points of
$\ay$
are called {\em events}. There is a large class 
$\cT$
of 2-dimensional spacetimes compatible with that causal structure,
and we characterize it axiomatically in sect.~\ref{sect:timed_causal}. 
Any spacetime
$T\in\cT$
is a triple
$T=(\ay,\cH,t)$,
where 
$\cH$
is a class of timelike curves in
$\ay$
called {\em timelike lines}, which are actually timelike geodesics,
and
$t$
is the time between events in the causal relation. The spacetime
$T\in\cT$
is called {\em timed causal space}.
We prove

\begin{thm}\label{thm:main} There are natural mutually inverse maps 
$\wh T:\cM\to\cT$
and 
$\wh M:\cT\to\cM$
such that the groups of automorphisms of any 
$M\in\cM$
and of the respective
$T=\wh T(M)\in\cT$
are canonically isomorphic.
\end{thm}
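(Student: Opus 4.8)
The plan is to construct both maps explicitly, using the cross-ratio as the bridge, and then to verify that they are mutually inverse and equivariant. Throughout, the underlying set of events is the \emph{fixed} space $\ay$ together with its intrinsic causal structure, so the entire content of a member $T=(\ay,\cH,t)\in\cT$ is carried by the time function $t$ and the family $\cH$ of timelike lines; correspondingly, the whole content of $M\in\cM$ is carried by its cross-ratio $\crr$. The core of the argument is therefore a dictionary turning $\crr$ into $(\cH,t)$ and back.

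First I would define $\wh T$. Given $M\in\cM$ and two causally related events $e=\{a,b\}$, $e'=\{c,d\}$ (the causal relation being the purely combinatorial condition of whether the pairs link on $S^1$), I set $t(e,e')$ to be an explicit elementary function, of the form $\tfrac12|\log\crr(a,b,c,d)|$ with the four points ordered according to the causal type, of the cross-ratio of the quadruple. The timelike lines $\cH$ are the natural one-parameter families of events dual to the geodesic pencils of $\hyp^2$: those events $\{\xi(s),\eta(s)\}$ whose members sweep out such a pencil. I would then check the defining axioms of $\cT$ one at a time. Compatibility with the causal structure is automatic, since it is combinatorial and $\crr$ is finite and nonzero exactly on distinct quadruples. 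The substantive checks are that $t$ obeys the reverse triangle inequality along causal chains and, above all, that the curves in $\cH$ are genuine timelike geodesics, i.e. that $t$ is additive along them and maximal between their endpoints. This is precisely where the monotonicity hypothesis on $M$ is indispensable, and I expect it to be the main obstacle: monotonicity is exactly the quantitative control on how $\crr$ varies as a point moves along the circle that forces the additivity and maximality of $t$.

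Next I would define $\wh M$ by inverting this dictionary. Given $T\in\cT$, four distinct points of $S^1$ determine two events, and the recorded value of $t$ between them, fed through the inverse of the formula above, returns a candidate cross-ratio $\crr$. I would verify that $\crr$ satisfies the cocycle and symmetry identities characterizing a M\"obius structure — these translate into relations for $t$ that follow from the $\cT$-axioms — and that the resulting structure lies in $\cM$, i.e. is monotone; the latter is the dual of the geodesic property of $\cH$ used above. By construction both composites reduce to the statement that the elementary correspondence $\crr\leftrightarrow t$ is a bijection, so $\wh M\circ\wh T=\id_{\cM}$ and $\wh T\circ\wh M=\id_{\cT}$.

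Finally, for the automorphism groups the key observation is that $S^1$ is intrinsically recoverable from the causal structure of $\ay$: two events are lightlike related exactly when they share an endpoint, so the null lines (maximal lightlike families) are in canonical bijection with the points of $S^1$, each event being the unique intersection of the two null lines through it. A M\"obius automorphism of $M$ is a self-homeomorphism of $S^1$ preserving $\crr$; it induces a bijection of $\ay$ that preserves the combinatorial causal structure and, preserving $\crr$, preserves $t$ and $\cH$, hence is an automorphism of $T=\wh T(M)$. Conversely, an automorphism of $T$ permutes the null lines and thus acts on $S^1$; since it preserves $t$ it preserves the recovered $\crr$, so it is a M\"obius automorphism of $M$, and the intersection description shows the two assignments are mutually inverse. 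This yields the canonical isomorphism of automorphism groups and completes the proof.
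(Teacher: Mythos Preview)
Your high-level architecture matches the paper's, but the central formula is wrong, and this propagates into both directions of the construction.

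You propose to set $t(e,e')=\tfrac12|\log\crr(a,b,c,d)|$ where $e=\{a,b\}$, $e'=\{c,d\}$. This is not how the paper defines the time, and in fact no cross-ratio of those four points gives the correct answer. In the canonical case, take $e=(-r,r)$, $e'=(-r',r')$ on $\wh\R$; the correct time is $|\ln(r'/r)|$, whereas the three cross-ratios of $(-r,r,-r',r')$ are $\frac{(r'-r)^2}{(r+r')^2}$, $\frac{(r+r')^2}{4rr'}$, and its reciprocal, none of whose logarithms yield $|\ln(r'/r)|$. The paper's definition (equation~(\ref{eq:time})) instead first finds the unique common perpendicular $d=(x,y)$ with $e,e'\in h_d$ and then uses a cross-ratio involving $x,y$ and one point from each of $e,e'$. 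Monotonicity is needed precisely to make this common perpendicular exist and be unique (Corollary~\ref{cor:common_perpendicular_prescribed}, Lemma~\ref{lem:unique_common_perpendicular}), not to establish a reverse triangle inequality---the axioms of $\cT$ contain no such inequality; that is treated separately in Section~\ref{sect:time_inequality}. What you are missing from the axiom list are the symmetry conditions (t5) and (t6), and these are exactly what drives the reverse construction.

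Consequently your construction of $\wh M$ is too naive. The paper does not read off $\crr$ from a single value of $t$; rather, for a cyclic $4$-tuple $xyuz$ it projects $z,u$ onto the timelike line $h_{(x,y)}$ via the reflection $\rho_{(x,y)}$, records the time $t_{xy}$ between those projections, does the same for the adjacent pair, and assembles $M(q)=(-t_{xy},t_{yu},t_{xy}-t_{yu})$. Showing this is well-defined requires Axiom~(t5); showing it is an actual M\"obius structure (not merely a sub-M\"obius structure) is the real work, handled by verifying the cocycle conditions of Theorem~\ref{thm:submoeb_moeb} using (t4a) and (t5). Your inversion of a single formula bypasses all of this and would not produce a consistent cross-ratio. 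Your treatment of the automorphism groups via light lines is correct and matches the paper.
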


It follows from constructions of sect.~\ref{sect:other_side} that 
the canonical M\"obius structure
$M_0$
on
$S^1$
determines de Sitter space
$\ds^2$,
that is,
$\wh T(M_0)=\ds^2$
and 
$\wh M(\ds^2)=M_0$.
In other words, Theorem~\ref{thm:main} says that a monotone M\"obius structure on 
$S^1$
on one hand, and the respective timed causal space with the absolute
$S^1$ 
on the other hand, are different sides of one and the same phenomenon
also in a general case.

The fundamental feature of spacetimes is the {\em time inequality.}
In section~\ref{sect:time_inequality}, we discuss a hierarchy
of time conditions, in particular, we introduce the {\em weak time inequality,}
and show that every timed causal space
$T\in\cT$
satisfies the weak time inequality, see Theorem~\ref{thm:wti}. 

In sect.~\ref{subsect:monotone_vp}, we introduce Increment Axiom~(I)
which implies the time inequality, and show that the subset
$\cI\sub\cM$
of M\"obius structures satisfying (I) contains the canonical structure
$M_0$, $M_0\in\cI$,
(Proposition~\ref{pro:canonical_vp}) with a neighborhood of
$M_0$
in the fine topology (Proposition~\ref{pro:pertubed_canonical_vp}).

In sect.~\ref{subsect:convex_moeb}, we introduce Convexity Axiom~(C)
for monotone M\"obius structures
$M\in\cM$,
which implies convexity of a functional 
$F_{ab}$
playing an important role in the hierarchy of time conditions and 
show that the subset
$\cC\sub\cM$
of convex M\"obius structures contains
$M_0$
(Proposition~\ref{pro:canonical_convex}).

The spacetimes of the class
$\cT$
are related to de Sitter 2-space
$\ds^2$
in a sense at least as hyperbolic
$\CAT(0)$
surfaces without singular points with the absolute
$S^1$ 
are related to the hyperbolic plane
$\hyp^2$.
If one would extend results of this paper to more general hyperbolic
spaces even with 1-dimension boundary at infinity, then this potentially
could produce new interesting classes of spacetimes e.g. having
a branching time (timelike lines).

\bigskip
{\em Acknowledgments.} The author is very much thankful to 
Prof.Dr. Viktor Schroeder for attention to this paper and valuable remarks. 
Especially, for pointing me out that Axiom~(t6) from Sect.~\ref{subsect:time}
follows from the other axioms of timed causal spaces. This is explained
in details in Appendix~2 written by Viktor Schroeder.

\tableofcontents

\section{On the other side of the absolute}
\label{sect:other_side}

This section serves as a motivation, contains no new result, and its constructions
are widely known. Here, we show how the both sides
$\hyp^2$
and
$\ds^2$
of the common absolute
$S^1$
can be recovered out of the canonical M\"obius structure 
$M_0$
on
$S^1$.
It is common to define 
$\hyp^2$
and
$\ds^2$
in the quotient of
$\R^3$
by the antipodal map 
$x\mapsto -x$.
This does not affect
$\hyp^2$,
while
$\ds^2$
becomes a nontrivial line bundle over
$S^1$,
that is, the open M\"obius band.

\subsection{Recovering the hyperbolic plane $\hyp^2$}
\label{subsect:recovering_hyp}

The canonical M\"obius structure 
$M_0$
on the circle
$S^1$
is determined by the condition that any its representative with
infinitely remote point is a standard metric (up to a positive factor) on
$\wh\R=\R\cup\{\infty\}$
extended in the sense that the distance between any 
$x\in\R$
and
$\infty$
is infinite. 

To recover
$\hyp^2$
from
$M_0$,
we consider the space 
$Y$
of all M\"obius involutions
$s:S^1\to S^1$
with respect to
$M_0$
without fixed points. The space 
$Y$
serves as the underlying space for
$\hyp^2$,
and what remains to do is to introduce a respective metric on
$Y$.

A line in
$Y$
is determined by a pair 
$x$, $x'\in S^1$
of distinct points and consists of all involutions
$s\in Y$
which permute
$x$, $x'$, $sx=x'$.
Given two distinct points
$s$, $s'\in Y$,
the compositions
$s's$, $ss':S^1\to S^1$
have one and the same fixed point set consisting of
two distinct points
$x$, $x'\in S^1$.
Thus there is a uniquely determined line in
$Y$
through
$s$, $s'$.

We say that an ordered 4-tuple 
$q=(x,x',y,z)\in(S^1)^4$
of pairwise distinct points is {\em harmonic} if 
\begin{equation}\label{eq:harmonic}
|xy|\cdot|x'z|=|xz|\cdot|x'y| 
\end{equation}
for some and hence any metric on
$S^1$
from
$M_0$.
For the canonical M\"obius structure
$M_0$,
harmonicity of
$(x,x',y,z)$
is equivalent to that the geodesic lines
$xx'$, $yz\sub\hyp^2$
are mutually orthogonal.

A {\em sphere}
$S$
between
$x$, $x'\in S^1$
is of a pair
$(y,z)\sub S^1$
such that the 4-tuple
$(x,x',y,z)$
is harmonic. We take spheres
$S$, $S'\sub S^1$
between
$x$, $x'$
such that 
$S$
is invariant under 
$s$, $s(S)=S$,
and
$S'$
is invariant under
$s'$, $s'(S')=S'$.
The spheres 
$S$, $S'$
with these properties exist and are uniquely determined. Now, we
take
$y\in S$, $y'\in S'$
and put
\begin{equation}\label{eq:distance_log}
|ss'|=|\ln\langle x,y,y',x'\rangle|,
\end{equation}
where
$\langle x,y,y',x'\rangle=\frac{|xy'|\cdot|yx'|}{|xy|\cdot|y'x'|}$
is the cross-ratio of the 4-tuple
$(x,y,y',x')$.
This is well defined and independent of the choice
$y\in S$, $y'\in S'$.
It is easy to show that
$|ss'|$
is the distance in the geometry of
$\hyp^2$,
see sect.~\ref{subsect:auto}.

\begin{rem}\label{rem:rank_one_extension} This construction is easily
extended to any rank one symmetric space of noncompact type,
see \cite{BS2}.
\end{rem}

\subsection{Recovering de Sitter space $\ds^2$}
\label{subsect:recoveting_desitter}

Let
$\ay$
be the space of unordered pairs 
$(x,y)\sim(y,x)$
of distinct points on
$S^1$
with the induced from
$S^1$
topology, that is,
$\ay=S^1\times S^1\sm\De/\sim$,
where 
$\De=\set{(x,x)}{$x\in S^1$}$
is the diagonal. Then 
$\ay$
is a nontrivial 
$\R$-bundle
over
$\rp^1\approx S^1$,
i.e., 
$\ay$
is the open M\"obius band. In this case, 
$S^1$
is the boundary of
$\ay$
at infinity,
$\di\ay=S^1$.
Points of
$\ay$
are called {\em events.}

We say that two events
$e$, $e'\in\ay$
are in the {\em causal relation} if and only if
$e$, $e'$
do not separate each other as pairs of points in
$S^1$.
This defines the {\em canonical causality structure} on
$\ay$.

A {\em light line} in
$\ay$
is determined by any
$x\in S^1$
and consists of all events
$a=(x,x')\in\ay$, $x'\in S^1\sm x$.
For this light line
$p_x$,
$x$
is the unique point at infinity. Two distinct light lines
$p_x$, $p_y$
have a unique common event 
$(x,y)\in\ay$,
and any two events on a light line are in the causal relation.

The canonical causality structure as well as light lines are inherent in
$\ay$,
and they do not depend of anything else.

\begin{rem}\label{rem:high_dim} In higher dimensional case, a
causality structure can be defined similarly, but then it
depends on the M\"obius structure because events are codimension one 
spheres in
$S^n$.
\end{rem}

A {\em timelike line} in
$\ay$
is determined by any event
$e\in\ay$
and consists of all
$a\in\ay$
such that the 4-tuple
$(e,a)$
is harmonic. For the timelike line
$h_e\sub\ay$
determined by
$e=(x,x')$,
the points
$x$, $x'\in S^1$
are the ends of
$h_e$
at infinity. It follows from definitions that 
$a\in h_e$
if and only if
$e\in h_a$.

Any two events on a timelike line are in the causal relation.
Conversely, for any two events
$a$, $a'\in\ay$
which are in the causal relation and not on a light line there is 
a unique timelike line (the common perpendicular)
$h_e$
with
$a$, $a'\in h_e$
(this amounts to existence and uniqueness of a common perpendicular
to divergent geodesics in
$\hyp^2$.
For a (de Sitter) proof see Corollary~\ref{cor:common_perpendicular_prescribed}
and Lemma~\ref{lem:unique_common_perpendicular}).
Let 
$a=(y,z)$, $a'=(y',z')$, $e=(x,x')$
in this case. Then the {\em time}
$t=t(a,a')$
between the events
$a$, $a'$
is defined by formula (\ref{eq:distance_log}) 
$$t=|\ln\langle x,y,y',x'\rangle|$$
(note that
$a$, $a'$
are sphere between
$x$, $x'$).

It follows that two timelike lines
$h_e$, $h_{e'}$
intersect each other if and only if the events
$e$, $e'\in\ay$
are in the causal relation and not on a light line. In this case, the intersection
$h_e\cap h_{e'}$
is a unique event.

An {\em elliptic line} in
$\ay$
is determined by any M\"obius involution without fixed points
$s\in Y$
and consists of all events
$a\in\ay$
such that
$sa=a$.
No two distinct events on an elliptic line are in the causal
relation.

\begin{rem}\label{rem:existence_elliptic} The last definition make sense
only for the canonical M\"obius structure
$M_0$
because in a general case a M\"obius structure may not admit any M\"obius
involution without fixed points.
\end{rem}

\subsection{Automorphisms of $M_0$}
\label{subsect:auto}

To introduce a metric structure on
$\ay$
we consider the Lie algebra
$\mathfrak g$
of the Lie group
$G=\SL_2(\R)$.
Given
$\al$, $\be\in\mathfrak g$
we have the Killing form
\begin{equation}\label{eq:killing_form}
\langle\al,\be\rangle=\frac{1}{2}\tr(\al\be) 
\end{equation}
as a scalar product. Note that the matrices
$\si_1,\si_2,\si_3\in\mathfrak g$,
$$\si_1= 
\begin{bmatrix}
 1&0\\
 0&-1
\end{bmatrix},\quad
\si_2= 
\begin{bmatrix}
 0&1\\
 1&0
\end{bmatrix},\quad
\si_3= 
\begin{bmatrix}
 0&1\\
 -1&0
\end{bmatrix},
$$
are mutually orthogonal and
$\|\si_1\|^2=\langle\si_1,\si_1\rangle=1=\|\si_2\|^2$,
$\|\si_3\|^2=-1$.

The group
$G$
acts on
$\wh\R=\R\cup\{\infty\}$
by linear-fractional transformations
\[x\mapsto\frac{ax+b}{cx+d}\quad\text{for}\quad\begin{bmatrix}
   a&b\\
   c&d
  \end{bmatrix}
\in G\]
which are M\"obius with respect to the canonical M\"obius structure
$M_0$.
The action is not effective with the kernel
$\Z_2=\{\pm\id\}$, $G/\Z_2=\PSL_2(\R)$.
The group
$\PSL_2(\R)$
with the left invariant Lorentz metric (\ref{eq:killing_form})
is anti de Sitter 3-space
$\ads^3$.

We denote by
$K_i=\set{\exp(t\si_i)}{$t\in\R$}$, $i=1,2,3$,
a 1-parametric subgroup in
$G$,
and by
$\wh K_i$
its image in
$\PSL_2(\R)$.
Note that
$\wh K_i=K_i$
for
$i=1,2$
and
that
$\wh K_3=K_3/\Z_2$.

The space
$Y$
of M\"obius involutions
$s:S^1\to S^1$
without fixed points can be identified with the homogeneous space
$G/K_3=\PSL_2(\R)/\wh K_3$
because the group
$$K_3=\set{g_3(t)=\exp(t\si_3)=\begin{bmatrix}
                \cos t&\sin t\\
               -\sin t&\cos t
               \end{bmatrix}}{$t\in\R$}$$
stabilizes
$s=g_3(\frac{\pi}{2})=\begin{bmatrix}
                           0&1\\
                          -1&0 
                          \end{bmatrix}$
which acts on
$\wh\R$
as the M\"obius involution
$s(x)=-\frac{1}{x}$
without fixed points. The space
$G/K_3$
carries a left-invariant Riemannian metric 
$h_3$
originated from the subspace
$L_3\sub\mathfrak g$
spanned by
$\si_1$, $\si_2$,
and
$(G/K_3,h_3)$
is isometric to
$\hyp^2$.
To see that we compute the respective Riemannian distance
between two involutions
$s_1$, $s_2\in Y$.
By conjugation we can assume that
$s_1=s$, $s_2=s'$,
where
$s'=g_1(t)\cdot s\cdot g_1^{-1}(t)$
for some
$t\in\R$,
$$g_1(t)=\exp(t\si_1)=\begin{bmatrix}
                       e^t&0\\
                         0&e^{-t}.
                      \end{bmatrix}
$$
Then
$s'=\begin{bmatrix}
     0&e^{2t}\\
    -e^{-2t}&0
    \end{bmatrix}$
and
$s'(x)=-\frac{e^{4t}}{x}$.
The curve
$t\mapsto g_1(t)$
is a unit speed geodesic in
$G$.
While projected to
$\PSL_2(\R)$
the speed is doubled because of linear-fractional action of
$\PSL_2(\R)$, 
so we have
$|ss'|=2t$.
In the upper half-plane model of
$\hyp^2$
the involution
$s$
fixes
$i=(0,1)$
with Euclidean distance
$|0i|_e=1$
and
$s'$
fixes
$ie^{2t}=(0,e^{2t})$
with Euclidean distance
$|0ie^{2t}|_e=e^{2t}$,
hence
$|ss'|=2t$
equals the 
$\hyp^2$-distance
$|(0,1)(0,e^{2t})|=\ln\frac{|0ie^{2t}|_e}{|0i|_e}=2t$.

The action of
$\PSL_2(\R)$
on
$\wh\R\approx S^1$
induces the {\em standard} action of
$\PSL_2(\R)$
on
$\ay$.
Note that 
$a=\{-1,1\}\in\ay$
is a fixed point for 
$K_2$
because
$$\exp(t\si_2)=\begin{bmatrix}
                \cosh t&\sinh t\\
                \sinh t&\cosh t
               \end{bmatrix},$$
and
$$\exp(t\si_2)a=\left\{\frac{\cosh t(-1)+\sinh t}{\sinh t(-1)+\cosh t},
  \frac{\cosh t+\sinh t}{\sinh t+\cosh t}\right\}=\{-1,1\}=a.$$
It follows that 
$\ay$
can be identified with the homogeneous space
$\PSL_2(\R)/K_2$, 
or similarly with
$\PSL_2(\R)/K_1$.
The space
$\ay=\PSL_2(\R)/K_2$
carries a left-invariant Lorentz metric 
$h_2$
originated from the subspace
$L_2\sub\mathfrak g$
spanned by
$\si_1$, $\si_3$,
and 
$\ds^2=(\ay,h_2)$.

In the rest of the paper, we explain how a M\"obius structure
$M$
from a large class of structures on the circle gives rise to a spacetime,
and vice versa, without any assumption on symmetries of
$M$.

\section{Timed causal spaces on the circle}
\label{sect:timed_causal}

In this section we list axioms for timed causal spaces
on the circle.

\subsection{The canonical causality structure}
\label{subsect:causality_structure}

Recall that on the space
$\ay$
of unordered pairs of distinct point in
$X=S^1$,
which is homeomorphic to the open M\"obius band, we have the canonical causal 
structure. That is, events
$e$, $e'\in\ay$
are in the causal relation if and only if they do not separate
each other as pairs of points in
$X$.
In the opposite case, we also say that events
$e$, $e'\in\ay$
separate each other. In the case
$e$, $e'\in\ay$
are in the causal relation and not on a light line, we say 
that the events
$e$, $e'$
are in the {\em strong} causal relation. 

The canonical causal structure and light lines inherent to
$\ay$,
see sect.~\ref{subsect:recoveting_desitter}. 

For a fixed event
$e\in\ay$
the set 
$C_e$
of all
$e'\in\ay$
in the causal relation with
$e$
is called the {\em causal cone}. The pair
$e\sub X$
decomposes
$X$
into two closed arcs, which we denote by
$e^+$, $e^-$,
with
$e^+\cap e^-=e$.
Every
$a\in\ay$
with
$a\sub e^\pm$
is in the causal relation with 
$e$.
We let
$$C_e^\pm=\set{a\in\ay}{$a\sub e^\pm$}.$$
Therefore, a choice of
$e^+$, $e^-$
induces the decomposition
$C_e=C_e^+\cup C_e^-$
of the causal cone
$C_e$
into the {\em future} cone
$C_e^+$
and the {\em past} cone
$C_e^-$
with
$C_e^+\cap C_e^-=e$,
and moreover introduces a partial order on
$\ay$
in the following way. Every
$a\in C_e^\pm$
decomposes
$X$
into two closed arcs, and if
$a\neq e$,
we canonically define
$a_e^\pm$
as one of them that does not contain
$e$,
otherwise
$a_e^\pm=e^\pm$.
Now we define:
$a\le_e a'$
if and only if one of the following holds

\begin{itemize}
 \item $a\in C_e^-$, $a'\in C_e^+$
 \item $a'\sub a_e^+$ 
if
$a,a'\in C_e^+$
and
$a\sub (a')_e^-$
if
$a,a'\in C_e^-$.
\end{itemize}
As usual, we say that
$a<_e a'$,
if
$a\le_e a'$
and
$a\neq a'$.

Note that there is no global partial order on
$\ay$
compatible with the canonical causal structure, and the order above
only appears if an event
$e\in\ay$,
future
$e^+$
and past 
$e^-$
arcs are chosen.

\subsection{Timelike lines and a causal space}
\label{subsect:hyplines}

The notion of a {\em timelike line} is not inherent in
$\ay$,
and we define this notion axiomatically. 

\medskip\noindent
{\bf Axioms for timelike lines}
\begin{itemize}
 \item[(h1)] every event 
$e\in\ay$
uniquely determines a timelike line
$h_e\sub\ay$,
and every timelike line in
$\ay$
is of form
$h_e$
for some
$e\in\ay$;
 \item[(h2)] any event
$a\in h_e$
separates
$e$;
 \item[(h3)] any two events on a timelike line are in the causal
relation;
 \item[(h4)] for any point
$x\in X\sm e$
there is a unique event 
$x_e=(x,y)\in h_e$;
 \item[(h5)] if an event 
$a\in\ay$
is on a timelike line
$h_e$, 
then
$e\in h_a$;
 \item[(h6)] for any two distinct events
$a$, $a'\in\ay$
there is at most one timelike line
$h_e$
with
$a$, $a'\in h_e$.
\end{itemize}

The space
$\ay$
with a fixed collection 
$\cH$
of subsets satisfying the axioms of
timelike lines is called a {\em causal space}. We use 
notation
$(\ay,\cH)$
for a causal space. In view of Axiom~(h1), we say that an event 
$e\in\ay$
and the timelike line
$h_e\sub\ay$
are {\em dual} to each other. 

It follows from (h4) that for every event
$e=(z,u)\in\ay$
we have a well defined map 
$\rho_e:X\to X$
given by
$\rho_e(z)=z$, $\rho_e(u)=u$,
and 
$(x,\rho_e(x))=x_e$
for every
$x\in X\sm e$.
The map 
$\rho_e$
is called the {\em reflection with respect to}
$e$.

\begin{lem}\label{lem:reflection} For every
$e=(z,u)\in\ay$,
the map 
$\rho_e:X\to X$
is an involutive homeomorphism.
\end{lem}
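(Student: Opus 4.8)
The plan is to prove that $\rho_e$ is an involutive homeomorphism by unpacking the definition via Axiom~(h4) and Axiom~(h5), together with the uniqueness in Axiom~(h6). Recall that for $e=(z,u)$ we set $\rho_e(z)=z$, $\rho_e(u)=u$, and for $x\in X\sm e$ we define $\rho_e(x)$ by the condition $(x,\rho_e(x))=x_e$, where $x_e$ is the unique event of $h_e$ whose first coordinate is $x$, guaranteed by (h4). The first thing I would check is that $\rho_e$ is \emph{well defined} and \emph{fixes no other point}: by (h2) every event $a\in h_e$ separates $e$, so if $x$ lies in one of the open arcs cut out by $e$, its partner $\rho_e(x)$ must lie in the \emph{other} open arc in order for $(x,\rho_e(x))$ to separate $\{z,u\}$; in particular $\rho_e(x)\ne x$, and $\rho_e$ interchanges the two open arcs determined by $e$.

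The key algebraic step is \emph{involutivity}. Given $x\in X\sm e$, let $y=\rho_e(x)$, so the event $a=(x,y)=x_e$ lies on $h_e$. I want to show $\rho_e(y)=x$, i.e. $(y,\rho_e(y))=y_e$ equals $a$ as well. Since $a=(x,y)=(y,x)$ as an unordered pair and $a\in h_e$, the event $a$ is the event of $h_e$ whose first coordinate is $y$; by the uniqueness clause of (h4) applied at the point $y$, this event is exactly $y_e$, hence $\rho_e(y)=x$. Thus $\rho_e\circ\rho_e=\id$ on $X\sm e$, and it is the identity on $e$ by definition, giving $\rho_e^2=\id$ globally. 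This already yields that $\rho_e$ is a bijection with $\rho_e=\rho_e^{-1}$.

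It remains to establish \emph{continuity}, which I expect to be the main obstacle since the axioms are combinatorial and do not mention topology directly. The plan is to argue continuity from the structure of $h_e$ as a curve and the topology of $\ay$ as the open M\"obius band with $\di\ay=S^1=X$. The timelike line $h_e$ is a subset of $\ay$ parametrized, via (h4), by its first coordinate $x\in X$ ranging over the closure; the two ends of $h_e$ at infinity are precisely the points $z,u\in X=\di\ay$ dual to $e$, as noted after the statement of (h1)--(h6). Writing $\pi\colon S^1\times S^1\sm\De\to\ay$ for the canonical projection, I would show that the assignment $x\mapsto x_e$ gives a continuous (indeed homeomorphic onto its image) parametrization of $h_e\cup\{z,u\}$, and that $\rho_e$ is recovered as the composition of this parametrization with the map extracting the second coordinate. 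Concretely, if $x_n\to x_0$ in $X\sm e$, then the events $x_n{}_e=(x_n,\rho_e(x_n))$ lie on $h_e$ and, by properness of the curve $h_e$ together with its prescribed endpoints, any limit event must again lie on $\ov{h_e}$ and have first coordinate $x_0$; by the uniqueness in (h4) this limit is $x_0{}_e=(x_0,\rho_e(x_0))$, forcing $\rho_e(x_n)\to\rho_e(x_0)$. The delicate point is handling the boundary values $x_0\in\{z,u\}$ and ruling out escape of $\rho_e(x_n)$ toward the wrong endpoint; here I would use (h2), which pins the two open arcs exchanged by $\rho_e$, so that as $x\to z$ along one arc its partner $\rho_e(x)$ must approach $z$ as well (both coordinates of the event tend to the same end of $h_e$), yielding continuity at the fixed points $z,u$. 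Since $\rho_e$ is a continuous involution, it is its own continuous inverse, and therefore an involutive homeomorphism of $X$.
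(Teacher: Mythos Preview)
Your involutivity argument is correct and essentially identical to the paper's: both observe that if $y=\rho_e(x)$ then $(y,x)=(x,y)\in h_e$, so the uniqueness in (h4) applied at $y$ gives $\rho_e(y)=x$.

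The continuity argument, however, has a genuine gap. You appeal to ``properness of the curve $h_e$ together with its prescribed endpoints'' and to limits of events staying in $\ov{h_e}$, but none of this has been established at this point: the fact that $h_e$ is homeomorphic to $\R$ with ends $z,u$ is Proposition~\ref{pro:timelike_lines}(a), which is proved \emph{after} the present lemma and in fact uses $\rho_e$. In particular you have no a~priori reason why $h_e$ is closed in $\ay$, so from $x_n\to x_0$ and $(x_n,\rho_e(x_n))\in h_e$ you cannot conclude that a subsequential limit of $\rho_e(x_n)$ pairs with $x_0$ to give an event of $h_e$. Your treatment of the boundary case (``as $x\to z$ along one arc its partner $\rho_e(x)$ must approach $z$ as well'') is likewise unjustified without further input.

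The missing idea is Axiom~(h3), which you never invoke. The paper's route is purely order-theoretic: fix an orientation of $X$, giving linear orders on the arcs $e^+$, $e^-$; then (h2) forces $\rho_e$ to interchange $\intr e^+$ and $\intr e^-$, and (h3) forces $\rho_e|_{e^+}:e^+\to e^-$ to \emph{reverse} these orders (if $x_1<x_2$ in $e^+$ but $\rho_e(x_1)$ and $\rho_e(x_2)$ were in the same order in $e^-$, the events $(x_1,\rho_e(x_1))$ and $(x_2,\rho_e(x_2))$ would separate each other, contradicting (h3)). An order-reversing bijection between closed intervals is automatically a homeomorphism, which finishes the proof cleanly without any appeal to the topology of $\ay$ or to later results. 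Note also that (h5) and (h6), which you list in your plan, play no role; cf.\ Remark~\ref{rem:axiom_h6}.
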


\begin{proof} By definition,
$\rho_e^2(z)=z$, $\rho_e^2(u)=u$.
Let 
$y=\rho_e(x)$
for 
$x\in X\sm e$.
Then events
$(x,y),(\rho_e(y),y)\in h_e$,
hence
$\rho_e(y)=x$
by (h4). Thus
$\rho_e^{-1}=\rho_e$,
and
$\rho_e$
is a bijection.

The event
$e$
decomposes
$X$
into two closed arcs
$e^+$, $e^-$
with
$X=e^+\cup e^-$, $e^+\cap e^-=e$.
An orientation of
$X$
determines linear orders on
$e^+$, $e^-$.
It follows from (h2) and (h3) that 
$\rho_e:e^+\to e^-$
reverses the orders. Hence,
$\rho_e$
is continuous and, therefore, a homeomorphism.
\end{proof}

\begin{pro}\label{pro:timelike_lines} Let
$(\ay,\cH)$
be a causal space. Then
\begin{itemize}
 \item[(a)] for any 
$e\in\ay$
the line 
$h_e$
is homeomorphic (in the induced from
$\ay$
topology) to
$\R$,
and the boundary of the closure
$\ov h_e\sub\ay\cap\di\ay$
is
$e$, $\d\ov h_e=e$;
 \item[(b)] for any two distinct events
$a$, $a'\in\ay$
there is a timelike line
$h$
including
$a$, $a'$
if and only if
$a$
and
$a'$
are in the strong causal relation. In this case,
$h$
is unique with this property;
 \item[(c)] any two distinct timelike lines
$h_e$, $h_{e'}\in\cH$
have a common event
$a$ 
if and only if
$e$, $e'$
are in the strong causal relation. 
In this case,
$a$
is unique;
 \item[(d)] a light line
$p_x\sub\ay$
intersects a timelike line
$h_e\sub\ay$
if and only if
$x\not\in e$.
In this case, the common event
$a\in p_x\cap h_e$
is unique.
\end{itemize}
\end{pro}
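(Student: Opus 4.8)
The unifying tool throughout is the reflection $\rho_e$ from Lemma~\ref{lem:reflection}. Recall that $e=(z,u)$ cuts $X=S^1$ into closed arcs $e^+,e^-$, that $\rho_e$ is an orientation reversing involutive homeomorphism fixing $z,u$ and interchanging $e^+,e^-$, and that by (h4) the timelike line is $h_e=\set{\{x,\rho_e(x)\}}{$x\in X\sm e$}$. Since $\rho_e$ swaps the two open arcs, each event of $h_e$ has exactly one representative in $\intr e^+$, so $x\mapsto\{x,\rho_e(x)\}$ is a continuous bijection of $\intr e^+\approx\R$ onto $h_e$ with continuous inverse (projection to the $e^+$-point); this gives the homeomorphism $h_e\approx\R$ of part (a). Letting $x\to z$ or $x\to u$ gives $\rho_e(x)\to z$ resp. $u$, so the event degenerates to the boundary point $z$ resp. $u$ of $\di\ay$; hence $\d\ov h_e=\{z,u\}=e$, completing (a).

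Next I record a duality that ties (b) to (c). For distinct events $a,a'$, reading (h5) in both directions gives
\[
a,a'\in h_e \iff e\in h_a\cap h_{a'},
\]
and $h_a\neq h_{a'}$ because $\d\ov h_a=a\neq a'=\d\ov h_{a'}$ by part (a). Thus ``$a,a'$ lie on a common timelike line'' is literally ``$h_a,h_{a'}$ have a common event'', so (b) is the special case $(e,e')=(a,a')$ of (c) and it suffices to treat (c). I would also isolate the elementary remark that two events sharing a point of $X$ can never lie on one timelike line: if $\{x,y\},\{x,y'\}\in h_e$ with $x\notin e$ this contradicts uniqueness in (h4), while if $x\in e$ it contradicts (h2), since an event meeting $e$ cannot separate $e$. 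The easy implications now follow. For the forward direction of (c), a common event $a\in h_e\cap h_{e'}$ forces $e,e'\in h_a$ by (h5); they are causally related by (h3) and share no point by the remark, hence are in the strong causal relation. Part (d) is settled the same way: if $x\in e$ then by (h2) no event of $h_e$ contains $x$, so $p_x\cap h_e=\es$; if $x\notin e$ then (h4) yields the unique event $x_e\in h_e$ containing $x$, which is exactly the unique point of $p_x\cap h_e$. Uniqueness in (b) is (h6) verbatim; in (c), if $h_e\neq h_{e'}$ shared two distinct events they would lie on two distinct timelike lines, again contradicting (h6).

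The real content, and the step I expect to be the main obstacle, is the converse of (c): if $e,e'$ are in the strong causal relation then $h_e\cap h_{e'}\neq\es$. The plan is to convert this into a coincidence problem on the circle. An event $\{w,w'\}$ lies on $h_e$ exactly when $\rho_e$ swaps $w,w'$, so $a\in h_e\cap h_{e'}$ is equivalent to a point $w\in X$ with $\rho_e(w)=\rho_{e'}(w)$, equivalently a fixed point of the orientation preserving homeomorphism $\rho_e\circ\rho_{e'}$; since $e\cap e'=\es$ such a $w$ automatically avoids $e\cup e'$, so $\rho_e(w)\neq w$ and $\bar e=\{w,\rho_e(w)\}$ is a genuine event. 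I would produce $w$ by an intermediate value argument: choose monotone lifts $\wt\rho_e,\wt\rho_{e'}:\R\to\R$ of the two (orientation reversing) involutions and set $G=\wt\rho_e-\wt\rho_{e'}$, which is continuous and $1$-periodic, a coincidence being precisely an integer value of $G$. Strong causality means, writing $e=\{z,u\}$, $e'=\{z',u'\}$, that $z',u'$ lie in one open arc of $X\sm e$, so after orienting $X$ one has the cyclic order $z<z'<u'<u$. Evaluating $G$ at the $\rho_{e'}$-fixed points $z',u'$ and using $\rho_{e'}(z')=z'$, $\rho_{e'}(u')=u'$ together with $\rho_e(z'),\rho_e(u')\in\intr e^-$ and this order, one finds that $G$ takes values of opposite sign on $[z',u']$; the intermediate value theorem then gives $w^*\in(z',u')$ with $\rho_e(w^*)=\rho_{e'}(w^*)$, so $\bar e=\{w^*,\rho_e(w^*)\}\in h_e\cap h_{e'}$. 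Via the duality above this is the common perpendicular $h_{\bar e}$ asserted in (b). It is worth stressing where the hypothesis enters: the graphs of two orientation reversing involutions carry the same homology class on the torus and so need not meet, and the coincidence genuinely relies on the \emph{same-arc} position $z<z'<u'<u$ furnished by strong causality; for separating events the sign pattern reverses and no intersection is forced, matching the fact that crossing geodesics admit no common perpendicular.
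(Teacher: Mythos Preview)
Your proof is correct and follows the same overall strategy as the paper: parts (a) and (d) are handled identically, and for (b)/(c) both arguments hinge on producing a fixed point of the composition of the two reflections. The one difference worth noting is how that fixed point is found. The paper argues in one line that if $a^+$ is the closed arc determined by $a$ that does not contain $a'$, then $\rho_a\circ\rho_{a'}(a^+)\sub\intr a^+$ (since $\rho_{a'}$ sends $a^+$ into the short arc of $a'$, which lies in $\intr a^-$, and $\rho_a$ returns that into $\intr a^+$); a continuous self-map of a closed arc into its interior has a fixed point by the intermediate value theorem. Your lift-and-sign computation on $[z',u']$ reaches the same conclusion but requires some care with the choice of lifts (a decreasing lift of $\rho_{e'}$ can fix only one of the two lifted endpoints, so the phrase ``$\rho_{e'}(z')=z'$, $\rho_{e'}(u')=u'$'' needs interpretation), whereas the paper's formulation avoids this bookkeeping entirely. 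The paper also runs the duality in the opposite direction, proving (b) first and deducing (c) from it, but that is purely organizational.
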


\begin{proof} (a) Let
$\rho_e:X\to X$
be the reflection with respect to
$e$, $e^+\sub X$
one of the two closed arcs, in which
$e$
decomposes
$X$.
Then the map 
$\intr e^+\to h_e$, $x\mapsto(x,\rho_e(x))$
is an order preserving bijection. Extended to
$e^+$,
it gives an order preserving bijection to
$\ov h_e=h_e\cup e$. 
Thus
$h_e$
is homeomorphic to
$\intr e^+\approx\R$,
and
$\d\ov h_e=e$.

(b) Any distinct
$a$, $a'\in\ay$
on a timelike line
$h$
are in the causal relation by (h3), and by (h4)
they are not on a light line. Hence,
$a$, $a'$
are in the strong causal relation. Conversely, assume that events
$a$, $a'\in\ay$
are in the strong causal relation. Let
$\rho=\rho_a\circ\rho_{a'}$
be the composition of respective reflections,
$a^+\sub X$
the closed arc determined by
$a$
that does not contain
$a'$.
Then
$\rho(a^+)\sub\intr a^+$,
and thus there is a fixed point
$x\in\intr a^+$
of
$\rho$, $\rho(x)=x$.
It follows that the both reflections
$\rho_a$, $\rho_{a'}$
preserve the event
$e=(x,y)$,
where
$y=\rho_{a'}(x)$.
Hence,
$e\in h_a\cap h_{a'}$,
and by (h5),
$a,a'\in h_e$.
By (h6),
$h_e$
is unique with this property.

(c) By duality (h5), this is a reformulation of (b). 

(d) This immediately follows from (h2) and (h4).
\end{proof}

\begin{rem}\label{rem:axiom_h6} Axiom~(h6) is not used in
Lemma~\ref{lem:reflection}, and it is only used in 
Proposition~\ref{pro:timelike_lines} to prove the uniqueness in (b) and (c). 
Thus all the conclusions of Proposition~\ref{pro:timelike_lines} 
except for the uniqueness in (b) and (c) hold true without Axiom~(h6). 
We use this remark in sect.~\ref{subsect:timelike_lines}.
\end{rem}

\subsection{Timed causal space}
\label{subsect:time}

The notion of a {\em time} is also defined axiomatically. 

\medskip\noindent
{\bf The time axioms}
\begin{itemize}
 \item[(t1)]
A time 
$t(e,e')\ge 0$
between two events
$e$, $e'\in\ay$
is determined if and only if
$e$, $e'$
are in the causal relation;
 \item[(t2)] $t(e,e')=0$
if and only if
$e$, $e'$
are events on a light line;
 \item[(t3)] $t(e,e')=t(e',e)$
whenever it is defined;
 \item[(t4)] timelike lines are 
$t$-geodesics: 

(a) if
$e,e',e''\in h_a$
are events on a timelike line such that 
$e\le e'\le e''$,
then
$t(e,e')+t(e',e'')=t(e,e'')$;

(b) for every
$e\in h_a$
and every
$s>0$
there are
$e_\pm\in h_a\cap C_e^\pm$
with
$t(e,e_\pm)=s$;
 \item[(t5)] for any events
$e=(x,y)$, $d=(z,u)$
in the strong causal relation, we have
$t(z_e,u_e)=t(x_d,y_d)$;
 \item[(t6)] for any
$e=(x,y), d=(z,u)\in h_e$
the 4-tuple
$(d,e)$
is {\em harmonic} in the sense that 
$t(y_a,u_a)=t(y_b,z_b)$,
where
$a=(x,z)$, $b=(x,u)$.
\end{itemize}

A {\em timed causal space} is defined as
$T=(\ay,\cH,t)$,
where
$t$
is a time on the causal space
$(\ay,\cH)$.
This is a version of Busemann (locally) timelike spaces,
see \cite{Bus}, and also an axiomatic version of the de Sitter space
$\ds^2$.
Since 
$\ds^2$
is recovered from the canonical M\"obius structure 
$M_0$
on the circle, see sect.~\ref{subsect:recoveting_desitter},
it follows from results of sect.~\ref{sect:monotone} (see
Proposition~\ref{pro:safisfies_hypline_axiom}, 
Lemma~\ref{lem:unique_common_perpendicular} and 
Proposition~\ref{pro:satisfies_time_axioms}), that 
$\ds^2$
is a timed causal space. 

We denote by
$\cT$
the set of all timed causal spaces
$(\ay,\cH,t)$,
where the collection
$\cH$
of timelike line satisfies Axioms~(h1)--(h6), and the time
$t$
satisfies Axioms~(t1)--(t6). A
$T$-{\em automorphism}, $T=(\ay,\cH,t)\in\cT$,
is a bijection
$g:\ay\to\ay$
that preserves the timelike lines
$\cH$
and the time
$t$, $t(g(e),g(e'))=t(e,e')$
whenever
$t(e,e')$
is defined (we do not require to preserve the causality structure because this is
automatic).

\rem\label{rem:terminology} I am not satisfied with a terminology from
e.g. \cite{Bus}, \cite{PY}, where the term ``timelike (metric) space''
is used, because a respective object is never a metric space and its basic
feature is a causality relation. On the other hand to say ``timelike causal space'' 
sounds a little bit tautologically. Thus I use the term ``timed causal space'' instead.

\rem\label{rem:axioms_t5_t6} Strange looking Axioms~(t5), (t6) are automatically
satisfied for timed causal spaces induced by monotone M\"obius structures
on the circle, see sect.~\ref{sect:monotone}. However, they value and importance
are justified by the fact that (t5), (t6) are indispensable while one recovers 
a M\"obius structure on the circle from a timed causal space, see 
sect.~\ref{sect:timed_causal_moeb_structures}, especially
Lemma~\ref{lem:monb} and Lemma~\ref{lem:pro:timed_monotone:harmonic}.
In fact, Axiom~(t6) follows from the other axioms, see Appendix~2.

\section{M\"obius structures and hyperbolic spaces}
\label{sect:moeb}

On the boundary at infinity of any boundary continuous
Gromov hyperbolic space there is an induced  M\"obius 
structure. In this section, we recall details of this fact.

\subsection{Semi-metrics and topology}
\label{subsect:semi-metrics_topology}

Let
$X$
be a set. A function
$d:X^2\to\wh\R=\R\cup\{\infty\}$
is called a {\em semi-metric}, if it is symmetric,
$d(x,y)=d(y,x)$
for each
$x$, $y\in X$,
positive outside the diagonal, vanishes on the diagonal
and there is at most one infinitely remote point
$\om\in X$
for
$d$,
i.e. such that
$d(x,\om)=\infty$
for some
$x\in X\sm\{\om\}$.
Moreover, if
$\om\in X$
is such a point, then
$d(x,\om)=\infty$
for all 
$x\in X$, $x\neq\om$.
A metric is a semi-metric that satisfies the triangle inequality.

A 4-tuple
$q=(x_1,x_2,x_3,x_4)\in X^4$
is said to be {\em nondegenerate} if all its entries are pairwise
distinct. We denote by
$\reg\cP_4=\reg\cP_4(X)$
the set of ordered nondegenerate 4-tuples.

A {\em M\"obius structure}
$M$
on
$X$
is a class of M\"obius equivalent semi-metrics on
$X$,
where two semi-metrics are equivalent if and only if they have
the same cross-ratios on every
$q\in\reg\cP_4$.
An
$M$-{\em automorphism}
is a bijection
$f:X\to X$
that preserves cross-ratios. 

Given
$\om\in X$,
there is a semi-metric 
$d_\om\in M$
with infinitely remote point
$\om$.
It can be obtained from any semi-metric
$d\in M$
for which 
$\om$
is not infinitely remote by a {\em metric inversion},
$$d_\om(x,y)=\frac{d(x,y)}{d(x,\om)d(y,\om)}.$$
Such a semi-metric is unique up to a homothety, see \cite{FS},
and we use notation
$|xy|_\om=d_\om(x,y)$
for the distance between
$x$, $y\in X$
in that semi-metric. We also use notation
$X_\om=X\sm\{\om\}$.

Every M\"obius structure
$M$
on
$X$
determines the 
$M$-{\em topology}
whose subbase is given by all open balls centered at finite points
of all semi-metrics from
$M$
having infinitely remote points.

For the following fact see \cite[Corollary~4.3]{Bu1} in a more
general context of sub-M\"obius structures. We give here its proof for
convenience of the reader.

\begin{lem}\label{lem:continuity_semimetrics} For every
$\om\in X$, 
for a semi-metric
$d\in M$
with infinitely remote point 
$\om\in X$
and for every 
$x\in X_\om$
the function
$f_x:X\to\wh\R$, $f_x(y)=d(x,y)$,
is continuous in the 
$M$-topology.
\end{lem}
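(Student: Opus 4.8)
The plan is to verify continuity of $f_x$ by checking that the preimages of the members of a subbase for the topology on the value set $[0,\infty]\sub\wh\R$ are open in the $M$-topology. A convenient subbase consists of the sets $[0,\be)$ with $\be>0$ together with the sets $(\al,\infty]$ with $\al\ge 0$. Recall that the $M$-topology is generated by the open balls $B_{d'}(x',r)=\{y : d'(x',y)<r\}$ taken over all semi-metrics $d'\in M$ that possess an infinitely remote point and over all finite centers $x'$ of $d'$. Thus it suffices to exhibit each of the two types of preimage as such a ball, or as a union of such balls.

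For the upper bound the claim is immediate: $f_x^{-1}([0,\be))=\{y : d(x,y)<\be\}=B_d(x,\be)$, and this is a subbasic set because $d$ has the infinitely remote point $\om$ while $x\in X_\om$ is a finite center for $d$. The substantive case is the lower bound $f_x^{-1}((\al,\infty])=\{y : d(x,y)>\al\}$. Here the absence of a triangle inequality for the semi-metric $d$ blocks any direct estimate, and this is the main obstacle. To circumvent it I would pass to the semi-metric $d_x\in M$ obtained from $d$ by metric inversion at the point $x$; by construction $d_x$ has $x$ as its infinitely remote point, and $\om$ is then a finite point for $d_x$.

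The key computation is the inversion identity $d_x(\om,y)=1/d(x,y)$, valid for all $y\in X\sm\{x\}$. It follows from the inversion formula $d_x(u,v)=d(u,v)/(d(u,x)d(v,x))$ by substituting $u=\om$ and using that $d(\om,y)$ and $d(\om,x)$ are both infinite, so that their ratio is taken to be $1$; the model $X=\wh\R$ with $d(u,v)=|u-v|$ and inversion at $x=0$ serves as a check, giving $d_0(\infty,y)=|0-1/y|=1/|y|$. Since $d_x$ is only determined up to a homothety, one should note that rescaling a semi-metric merely rescales the radii of its balls and therefore leaves the generating subbase unchanged, so the normalization is harmless.

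Granting this identity, for $\al>0$ we obtain
$$\{y : d(x,y)>\al\}=\{y : d_x(\om,y)<1/\al\}=B_{d_x}(\om,1/\al),$$
which is subbasic since $\om$ is a finite center for $d_x$. For the remaining value $\al=0$ we write $f_x^{-1}((0,\infty])=X\sm\{x\}=\bigcup_{n\ge 1}B_{d_x}(\om,n)$, which is open as a union of subbasic sets. Combining this with the upper-bound case shows that every subbasic preimage is open, so $f_x$ is continuous. In particular, continuity at the infinitely remote point $\om$, where $f_x(\om)=\infty$, is already contained in the case $\al>0$, because $d_x(\om,\om)=0$ places $\om$ in each ball $B_{d_x}(\om,1/\al)$.
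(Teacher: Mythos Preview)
Your proof is correct and follows essentially the same approach as the paper: preimages of sets $[0,\be)$ are $d$-balls centered at $x$, while preimages of $(\al,\infty]$ become $d_x$-balls centered at $\om$ via the inversion identity $d_x(\om,y)=1/d(x,y)$. The paper states this identity without elaboration, whereas you add a brief justification and the remark on homothety; your handling of the border case $\al=0$ is also slightly more explicit, but the argument is the same.
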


\begin{proof} The function
$f_x$
takes values in
$[0,\infty]$.
For 
$s,t\in[0,\infty]$
let
$B_s(x)=\set{y\in X}{$f_x(y)<s$}$
be the open
$d$-ball 
of radius
$s$
centered at
$x$, $C_t(x)=\set{y\in X}{$f_x(y)>t$}$
the complement of the closed
$d$-ball.
The inverse image
$f_x^{-1}(I)$
of any open interval
$I\sub[0,\infty]$
is either an open ball
$B_s(x)$,
or a complement
$C_t(x)$,
or an intersection
$B_s(x)\cap C_t(x)$
for some
$s>t$.

Let
$d_x\in M$
be the metric inversion of
$d$.
Then
$d_x(y,\om)=1/d(x,y)$,
hence
$C_t(x)=\set{y\in X}{$d_x(y,\om)<1/t$}$ 
is the open 
$d_x$-ball of radius
$1/t$
centered at
$\om$.
It follows that 
$f_x^{-1}(I)$
is open in the 
$M$-topology.
\end{proof}

\subsection{Boundary continuous hyperbolic spaces}
\label{subsect:boundary_continuous}

Let 
$Y$
be a metric space. Recall that the Gromov product
$(x|y)_o$
of
$x$, $y\in Y$
with respect to
$o\in Y$
is defined by
$$(x|y)_o=\frac{1}{2}(|xo|+|yo|-|xy|),$$
where 
$|xy|$
is the distance in
$Y$
between
$x$, $y$.
We use the following definition of a hyperbolic space
adapted to the case of geodesic metric spaces.

\begin{definition}\label{def:gromov_hyperbolic}
A geodesic metric space
$Y$
is {\em Gromov hyperbolic}, if for some
$\de\ge 0$
and any triangle
$xyz\sub Y$
the following holds: If
$y'\in xy$, $z'\in xz$
are points with 
$|xy'|=|xz'|\le(y|z)_x$,
then 
$|y'z'|\le\de$.
In this case, we also say that
$Y$
is
$\de$-hyperbolic,
and 
$\de$
is a {\em hyperbolicity constant} of
$Y$.
\end{definition}

A Gromov hyperbolic space
$Y$
is {\em boundary continuous} if the Gromov product extends
continuously onto the boundary at infinity 
$\di Y=X$
in the following
way: given
$\xi$, $\eta\in X$,
for any sequences
$\{x_i\}\in\xi$, $\{y_i\}\in\eta$
there is a limit
$(\xi|\eta)_o=\lim_i(x_i|y_i)_o$
for every
$o\in Y$,
for more details, see \cite[sect.~3.4.2]{BS1}. Note that in this case
$(\xi|\eta)_o$
is independent of the choice
$\{x_i\}\in\xi$, $\{y_i\}\in\eta$.
This allows one to define for every
$o\in Y$
a function
$(\xi,\eta)\mapsto d_o(\xi,\eta)=e^{-(\xi|\eta)_o}$,
which is a semi-metric on
$X$.

\begin{lem}\label{lem:semi-metrics_moeb} Let
$Y$
be a boundary continuous hyperbolic space. Then for any 
$o$, $o'\in Y$,
the semi-metrics
$d_o$, $d_{o'}$
on 
$X=\di Y$
are M\"obius equivalent.
\end{lem}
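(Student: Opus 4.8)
The plan is to show that $d_o$ and $d_{o'}$ have identical cross-ratios on every nondegenerate $4$-tuple in $X=\di Y$, since by definition of a M\"obius structure this is precisely M\"obius equivalence. First I would reduce to logarithms: because every distinct pair of boundary points has a finite nonnegative Gromov product, one has $0<d_o(\xi,\eta)=e^{-(\xi|\eta)_o}\le 1$, so $d_o$ has no infinitely remote point and all relevant distances are finite and positive. For a nondegenerate $4$-tuple the cross-ratio is a ratio of distances in which each of the four points occurs exactly once among the numerator pairs and exactly once among the denominator pairs; taking logarithms and using $\ln d_o(\xi,\eta)=-(\xi|\eta)_o$ turns the cross-ratio of $d_o$ into an alternating sum of the Gromov products $(\xi_i|\xi_j)_o$.

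The heart of the argument is an additivity identity for interior points. For $x,y\in Y$ the definition gives directly $(x|y)_o-(x|y)_{o'}=\tfrac12(|xo|-|xo'|)+\tfrac12(|yo|-|yo'|)$, so this difference splits as $g(x)+g(y)$ with $g(x)=\tfrac12(|xo|-|xo'|)$ depending on a single point only. I would introduce the formal positive symmetric function $\tilde d_o(x,y)=e^{-(x|y)_o}$ on $Y$ and form the corresponding cross-ratio of any four interior points $x_1,\dots,x_4$. Subtracting the logarithmic cross-ratio for $o'$ from that for $o$ replaces each $(x_i|x_j)_o$ by $g(x_i)+g(x_j)$; in the resulting alternating sum each index $x_i$ carries total signed coefficient $1-1=0$, so the whole difference vanishes. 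Thus the interior cross-ratios of $\tilde d_o$ and $\tilde d_{o'}$ coincide for every $4$-tuple in $Y$, already before any passage to the boundary.

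Finally I would pass to the boundary, and this is exactly where boundary continuity enters. Fixing a nondegenerate boundary tuple $(\xi_1,\xi_2,\xi_3,\xi_4)$ and choosing for each $i$ a sequence $\{x^{(i)}_k\}\in\xi_i$, boundary continuity gives $(x^{(i)}_k|x^{(j)}_k)_o\to(\xi_i|\xi_j)_o$ and $(x^{(i)}_k|x^{(j)}_k)_{o'}\to(\xi_i|\xi_j)_{o'}$ as $k\to\infty$, independently of the chosen sequences. Hence the logarithmic cross-ratios of $\tilde d_o$ and $\tilde d_{o'}$ along $\{x^{(i)}_k\}$ converge to those of $d_o$ and $d_{o'}$ on the boundary tuple; since the finite cross-ratios agree for every $k$ by the previous step, their limits agree as well, so $d_o$ and $d_{o'}$ have the same cross-ratio on $(\xi_1,\xi_2,\xi_3,\xi_4)$. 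As the tuple is arbitrary, $d_o$ and $d_{o'}$ are M\"obius equivalent. I do not expect a serious obstacle here: the algebraic cancellation is elementary, and the only point requiring care is the interchange of the cross-ratio with the limit, which is precisely what boundary continuity supplies; degenerate configurations do not arise because $0\le(\xi|\eta)_o<\infty$ for distinct boundary points.
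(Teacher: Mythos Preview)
Your proof is correct and follows essentially the same approach as the paper: both show that the logarithmic cross-ratio $(x|u)_o+(y|z)_o-(x|z)_o-(y|u)_o$ of interior points is independent of $o$ by the elementary cancellation you describe, and then pass to the boundary via boundary continuity. The paper phrases the cancellation as ``all entries containing $o$ enter twice with opposite signs,'' while you make the splitting $(x|y)_o-(x|y)_{o'}=g(x)+g(y)$ explicit, but this is the same algebra.
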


\begin{proof} Given 4-tuple
$(x,y,z,u)\sub Y$,
we put
$$\cd_o(x,y,z,u)=(x|u)_o+(y|z)_o-(x|z)_o-(y|u)_o$$
for a fixed
$o\in Y$.
Then
$\cd_o(x,y,z,u)=\cd(x,y,z,u)$
is independent of the choice of
$o$
because all entries containing
$o$
enter
$\cd_o(x,y,z,u)$
twice with the opposite signs. 

Now, given a nondegenerate 4-tuple
$q=(\al,\be,\de,\ga)\in\reg\cP_4(X)$,
for any 
$\{x_i\}\in\al$, $\{y_i\}\in\be$, $\{z_i\}\in\ga$, $\{u_i\}\in\de$,
by the boundary continuity of
$Y$,
there is a limit
$$\cd(\al,\be,\ga,\de)=\lim_i\cd(x_i,y_i,z_i,u_i),$$
which coincides with
$(\al|\de)_o+(\be|\ga)_o-(\al|\ga_o)-(\be|\de)_o$.
Thus the cross-ratio
$$\frac{d_o(\al,\ga)d_o(\be,\de)}{d_o(\al,\de)d_o(\be,\ga)}=\exp(-\cd(\al,\be,\ga,\de))$$
is independent of 
$o$.
Hence, semi-metrics
$d_o$, $d_{o'}$
are M\"obius equivalent for any 
$o$, $o'\in Y$.
\end{proof}

The M\"obius structure
$M$
on the boundary at infinity
$X=\di Y$
of a boundary continuous hyperbolic space
$Y$
generated by any semi-metric
$d_o(\xi,\eta)=\exp(-(\xi|\eta)_o)$, $o\in Y$,
is said to be {\em induced} (from
$Y$).
 For any 
$\om\in X$, $o\in Y$,
the metric inversion
$d_\om$
of
$d_o$
with respect to
$\om$
is a semi-metric on
$X$
from
$M$
with the infinitely remote point
$\om$.
Recall that any two semi-metrics in
$M$
with a common infinitely remote point are proportional to each other.
Thus metric inversions with respect to
$\om$ 
of semi-metrics
$d_o$, $d_{o'}$
are proportional to each other for any 
$o$, $o'\in Y$.

In {\bf Appendix}, we show that every proper Gromov hyperbolic
$\CAT(0)$
space is boundary continuous, see Theorem~\ref{thm:cat0_boundary_contiuous}.

\section{Monotone M\"obius structures on the circle}
\label{sect:monotone}

\subsection{Axioms for monotone M\"obius structures on the circle}
\label{subsect:axioms_monotone}

We say that a M\"obius structure
$M$
on
$X=S^1$
is {\em monotone}, if it satisfies the following Axioms
\begin{itemize}
 \item [(T)] Topology: $M$-topology
on
$X$
is that of
$S^1$; 
\item[(M)] Monotonicity: given a 4-tuple
$q=(x,y,z,u)\in X^4$
such that the pairs
$(x,y)$, $(z,u)$
separate each other,
we have
$$|xy|\cdot|zu|>\max\{|xz|\cdot|yu|,|xu|\cdot|yz|\}$$
for some and hence any semi-metric from
$M$.
\end{itemize}

\begin{rem}\label{rem:ref_schroeder} These Axioms have arisen in
a discussion with V.~Schroeder while working on \cite{BS4}. 
\end{rem}

A choice of
$\om\in X$
uniquely determines the interval
$xy\sub X_\om$
for any distinct
$x$, $y\in X$
different from
$\om$
as the arc in
$X$
with the end points
$x$, $y$
that does not contain
$\om$.
As an useful reformulation of Axiom~(M) we have

\begin{cor}\label{cor:interval_monotone} Assume for a nondegenerate
4-tuple
$q=(x,y,z,u)\in\reg\cP_4$
the interval
$xz\sub X_u$
is contained in
$xy$, $xz\sub xy\sub X_u$.
Then
$|xz|_u<|xy|_u$.
\end{cor}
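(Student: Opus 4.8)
The plan is to recognise the statement as essentially a single application of Axiom~(M), rewritten through metric inversion. The hypothesis $xz\sub xy\sub X_u$ is a betweenness condition: $z$ lies on the arc from $x$ to $y$ that avoids $u$. First I would convert this betweenness into the separation hypothesis demanded by Axiom~(M), and only afterwards translate the resulting inequality into the inverted semi-metric $d_u$.

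To set up the separation, I would record the cyclic order of the four points. Since the arc $xy\sub X_u$ contains $z$ and has $x$ as one of its endpoints, the four points occur in the cyclic order $u,x,z,y$ around $S^1$ (up to reversal). Reading off this order, the chord $\{x,y\}$ splits $S^1$ into the sub-arc through $z$ and the complementary sub-arc through $u$; hence $z$ and $u$ lie on opposite arcs, so the pairs $(x,y)$ and $(z,u)$ separate each other. The 4-tuple $(x,y,z,u)$ therefore satisfies the hypothesis of Axiom~(M). Choosing a reference semi-metric $d\in M$ for which $u$ is \emph{not} the infinitely remote point and writing $|\cdot|=d(\cdot,\cdot)$, Axiom~(M) gives
\[
|xy|\cdot|zu|>\max\{|xz|\cdot|yu|,\,|xu|\cdot|yz|\}\ge|xz|\cdot|yu|,
\]
so $|xz|\cdot|yu|<|xy|\cdot|zu|$.

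Finally I would pass to $d_u$ by the metric inversion formula $|ab|_u=\tfrac{|ab|}{|au|\,|bu|}$, computing
\[
\frac{|xz|_u}{|xy|_u}=\frac{|xz|\cdot|yu|}{|xy|\cdot|zu|}<1,
\]
whence $|xz|_u<|xy|_u$. The one genuinely delicate point is the interplay between the two semi-metrics: Axiom~(M) must be invoked for a representative in which $u$ is finite (so that $|xu|,|yu|,|zu|$ are finite and positive), since it cannot be applied to $d_u$ directly, where these distances are infinite; the metric inversion then transfers the inequality to $d_u$, the ambiguity up to homothety being harmless because the comparison is scale-invariant. The rest is the bookkeeping of the cyclic order that pins down which term of the maximum in Axiom~(M) is the relevant one.
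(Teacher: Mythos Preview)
Your proof is correct and follows essentially the same route as the paper: establish that the pairs $(x,y)$ and $(z,u)$ separate each other, apply Axiom~(M) to obtain $|xz|\cdot|yu|<|xy|\cdot|zu|$, and read off $|xz|_u<|xy|_u$. The paper compresses your metric-inversion computation into the phrase ``In particular, $|xz|_u<|xy|_u$,'' relying on the convention that cross-ratio inequalities hold for \emph{any} semi-metric of $M$ (the infinities at $u$ cancel); your explicit passage through a semi-metric with $u$ finite and then through the inversion formula is a more careful unpacking of the same step.
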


\begin{proof} By the assumption, the pairs
$(x,y)$, $(z,u)$
separate each other. Hence, by Axiom~(M) we have
$|xz||yu|<|xy||zu|$
for any semi-metric from
$M$.
In particular,
$|xz|_u<|xy|_u$.
\end{proof}

We denote by
$\cM$
the class of monotone M\"obius structures on 
$S^1$.

\subsection{Examples of monotone M\"obius structures on the circle}
\label{subsect:examples_monotone}

By Theorem~\ref{thm:cat0_boundary_contiuous}, every proper
Gromov hyperbolic
$\CAT(0)$
space 
$Y$
is boundary continuous, and thus 
$\di Y$
possesses an induced M\"obius
structure.

Recall that in any 
$\CAT(0)$
space
$Y$,
the angle 
$\angle_o(x,x')$
between geodesic segments 
$ox$, $ox'$
with a common vertex
$o$
is well defined and by definition it is at most
$\pi$, $\angle_o(x,x')\le\pi$.

A point
$o$
in a 
$\CAT(0)$
space
$Y$
with
$\di Y$
homeomorphic to the circle
$S^1$
is said to be {\em singular,}
if there are two geodesics
$\xi\xi'$, $\eta\eta'\sub Y$
through
$o$
such that the pairs of points
$(\xi,\xi')$
and 
$(\eta,\eta')$
in
$\di Y$
separate each other, and 
$\angle_o(\xi,\eta)+\angle_o(\xi',\eta')\ge 2\pi$.

\begin{thm}\label{thm:without_singular} Let
$Y$
be a Gromov hyperbolic 
$\CAT(0)$
surface with
$\di Y=S^1$
and without singular points. Then the induced M\"obius structure
$M$
on
$X=\di Y$
is monotone.
\end{thm}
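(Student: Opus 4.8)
The plan is to verify the two defining axioms of monotonicity separately, working throughout with the induced visual semi-metrics $d_o(\xi,\eta)=e^{-(\xi|\eta)_o}$, $o\in Y$, which represent $M$ because $Y$ is boundary continuous by Theorem~\ref{thm:cat0_boundary_contiuous}. Axiom~(T) is the easy one: the topology induced by each $d_o$ on $\di Y$ is the usual cone topology, which by hypothesis is that of $S^1$; since for any $\om$ the metric inversion $d_\om$ restricts to a homeomorphism of $X_\om$ for this topology, the subbasis of $d_\om$-balls generates exactly the $S^1$-topology, giving (T).

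For Axiom~(M) I would first translate it into Gromov products. Writing each product with a fixed $d_o$ and taking logarithms, the inequality $|xy|\cdot|zu|>\max\{|xz|\cdot|yu|,\,|xu|\cdot|yz|\}$ is equivalent to
\[
(x|y)_o+(z|u)_o<(x|z)_o+(y|u)_o
\quad\text{and}\quad
(x|y)_o+(z|u)_o<(x|u)_o+(y|z)_o .
\]
In each of these every boundary point occurs once on the left and once on the right, so all $o$-dependent terms cancel and the two differences are independent of $o$ (this is just Möbius invariance, cf. the computation of $\cd$ in Lemma~\ref{lem:semi-metrics_moeb}). Hence it suffices to prove them for one cleverly chosen basepoint.

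That choice is the crossing point. Because the pairs $(x,y)$ and $(z,u)$ separate each other on $S^1=\di Y$, the bi-infinite geodesics $xy$ and $zu$ in $Y$ must intersect; here the surface (planarity) hypothesis is essential, as linked endpoints on the boundary circle force the lines to cross, and the absence of singular points, together with Gromov hyperbolicity excluding wide flat strips, guarantees that these geodesics are unique and meet in a single point $p$. Take $o=p$. Since $p$ lies on the geodesic $xy$, the rays from $p$ to $x$ and to $y$ concatenate to a geodesic, whence $(x|y)_p=0$ exactly; likewise $(z|u)_p=0$. Thus both inequalities reduce to the strict positivity of the remaining Gromov products, and it is enough to show, say, $(x|z)_p>0$ and $(x|u)_p>0$. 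Suppose $(x|z)_p=0$; then the rays $p\to x$ and $p\to z$ also concatenate to a geodesic, so $p\in xz$ and the direction $d_z$ in the space of directions $\Si_p$ makes angle $\pi$ with $d_x$. But $p\in xy$ already forces $\angle_p(x,y)=\pi$. Non-singularity at $p$ is exactly what makes the direction at distance $\pi$ from $d_x$ unique, so $d_z=d_y$ and hence $z=y$, a contradiction. The same argument gives $(x|u)_p>0$, and (M) follows.

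The main obstacle is the geometric input of this middle step: proving that two geodesics with linked endpoints on $\di Y=S^1$ cross in exactly one point, and extracting from the no-singular-points hypothesis the precise statement that a geodesic through $p$ has a unique straight continuation, equivalently that two distinct geodesic lines through $p$ cannot share a ray. Making this rigorous requires controlling the space of directions $\Si_p$ — which for a $\CAT(0)$ surface is a metric circle of length $\ge 2\pi$ — and showing that the singular-point condition $\angle_p(\xi,\eta)+\angle_p(\xi',\eta')\ge 2\pi$ is the exact obstruction to the uniqueness of antipodes used above. The translation of (M) into Gromov products and the computation $(x|y)_p=0$ are then routine.
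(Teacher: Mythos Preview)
Your approach is essentially the paper's: verify (T) via the cone topology, and for (M) choose the basepoint at a crossing point $p$ of the geodesics $xy$ and $zu$ so that $(x|y)_p=(z|u)_p=0$, reducing the inequality to positivity of the mixed Gromov products. The one substantive difference is how you extract that positivity from non-singularity. You aim to show that \emph{all four} products $(x|z)_p,(y|u)_p,(x|u)_p,(y|z)_p$ are strictly positive, via the lemma that a non-singular point has $\Si_p$ a standard circle of length exactly $2\pi$ (hence unique antipodes for $d_x$). That lemma is true, but it is precisely the ``main obstacle'' you flag, and the paper bypasses it: since by definition non-singularity at $p$ applied to the very geodesics $xy,zu$ says $\angle_p(x,z)+\angle_p(y,u)<2\pi$ (and similarly for the other opposite pair), and each angle is $\le\pi$, at least one angle in each pair is $<\pi$; Corollary~\ref{cor:zero_gromov_product} then gives at least one strictly positive product in each pair, and since all four are $\ge 0$ the two sums are positive. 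So the paper reads off what is needed directly from the hypothesis rather than first recasting non-singularity as a statement about $\Si_p$. Your detour through uniqueness of the geodesics and of their intersection is likewise unnecessary: any crossing point will do, and its existence follows from planarity plus separation alone.
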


\begin{proof} For the induced M\"obius structure, the 
$M$-topology
on
$X$
coincides with the standard Gromov topology, see \cite[Sect.~2.2.3]{BS1} or
\cite[Lemma~5.1]{Bu1}.
Thus
$M$
satisfies Axiom~(T) by the assumption. To check Axiom~(M), consider a 4-tuple
$q=(\xi,\xi',\eta,\eta')\in X^4$
such that the pairs
$(\xi,\xi')$
and 
$(\eta,\eta')$
separate each other. Since
$Y$
is Gromov hyperbolic, there are geodesics
$\xi\xi'$, $\eta\eta'\sub Y$
with the end points at infinity
$\xi,\xi'$
and 
$\eta,\eta'$
respectively. The assumption on separation and the fact that
$Y$
is a 
$\CAT(0)$
surface imply that these geodesics intersect at some point
$o$.
We have
$(\xi|\xi')_o=0=(\eta|\eta')_o$.
Thus
$|\xi\xi'|=1=|\eta\eta'|$
for the semi-metric
$|xy|=\exp(-(x|y)_o)$
on
$X$.
Recall that this semi-metric is a semi-metric of 
$M$.

The angles at
$o$
between the rays
$ox$, $x=\xi,\eta,\xi',\eta'$
in this cyclic order, form two opposite pairs.
Since 
$o$
is not singular in
$Y$,
at least one of angles
$\angle_o(x,z)<\pi$
for each opposite pair.
By Corollary~\ref{cor:zero_gromov_product},
$(x|z)_o>0$,
thus
$|xz|<1$.
It follows that 
$$|\xi\xi'|\cdot|\eta\eta|>\max\{|\xi\eta|\cdot|\xi'\eta'|,
                                 |\xi'\eta|\cdot|\xi\eta'|\},$$
i.e.,
$M$
satisfies Axiom~(M).
\end{proof}

\begin{exas}\label{exas:monotone_nonmonotone} 1. The canonical M\"obius
structure 
$M_0$
on the circle is monotone.

2. Let
$S$
be a closed surface of negative Euler characteristic with an Euclidean metric 
having cone type singularities with
complete angles
$>2\pi$
about every singular point,
$Y$
the universal covering of 
$S$
with the lifted metric. Then
$Y$
is a Gromov hyperbolic 
$\CAT(0)$
surface with 
$\di Y=S^1$.
It follows from Theorem~\ref{thm:cat0_boundary_contiuous} that
$Y$
induces a M\"obius structure
$M$
on
$\di Y$.
However, 
$M$ 
is not monotone.

3. Absence of singular points on a Gromov hyperbolic
$\CAT(0)$
surface 
$Y$
does not mean that
$Y$
has no metric singularities. Remove an open equidistant neighborhood
of a geodesic line in
$\hyp^2$
and glue remaining pieces by an isometry between their boundaries.
Then the obtained
$Y$
is a proper Gromov hyperbolic
$\CAT(-1)$
surface with
$\di Y=S^1$
without singular points, and Theorem~\ref{thm:without_singular}
can be applied to
$Y$.
At the same time, 
$Y$
has metric singularities along the gluing line.

4. Every (topological) embedding
$f:S^1\to\wh\R^2$
induces on
$S^1$
some metric and therefore a M\"obius structure
$M_f$.
For 
$f=\id$,
the M\"obius structure
$M_f$
coincides with the canonical one, and thus it is monotone. However, if
$f(S^1)\sub\R^2$
is an ellipse with principal semi-axes
$a$, $b$
such that 
$4ab\le a^2+b^2$,
then
$M_f$
is not monotone. That is, the M\"obius structure induced on a convex
curve in
$\wh\R^2$
in general is not monotone. 
\end{exas}

In what follows, we assume that a monotone M\"obius structure
$M$
on
$X=S^1$
is fixed.

\subsection{Harmonic pairs}
\label{subsect:harmonic_events}

A pair
$a=(x,y)$, $b=(z,u)\in\ay$
of events is 
{\em $M$-harmonic} or form an 
$M$-harmonic 4-tuple, if 
\begin{equation}\label{eq:harmonic_events}
|xz|\cdot|yu|=|xu|\cdot|yz|
\end{equation}
for some and hence any semi-metric of the M\"obius structure
(this is the same as (\ref{eq:harmonic}). However, for a general M\"obius
structure
$M$
on
$S^1$,
the interpretation of (\ref{eq:harmonic}) for the canonical
$M_0$
as orthogonality of respective geodesic lines in
$\hyp^2$
makes no sense). Nevertheless, in this case we also say that 
$a$, $b$
are mutually {\em orthogonal,}
$a\perp b$.
Note that any harmonic 4-tuple
$q=(a,b)$
is nondegenerate.

\begin{lem}\label{lem:harm_separate} If events
$a=(x,y)$, $b=(z,u)\in\ay$
are mutually orthogonal,
$a\perp b$,
then they separate each other and
$z\in xy\sub X_u$
is a unique midpoint.
\end{lem}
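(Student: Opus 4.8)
The plan is to fix the infinitely remote point at $u$ and translate the harmonicity relation into a statement about the single semi-metric $|\cdot|_u$, after which both separation and uniqueness will fall out of the monotonicity Corollary~\ref{cor:interval_monotone}. First I would rewrite (\ref{eq:harmonic_events}) via metric inversion at $u$. Starting from a semi-metric $d\in M$ for which $u$ is finite, the inversion formula $d_u(p,q)=d(p,q)/(d(p,u)d(q,u))$ gives $|xz|_u=|xz|/(|xu||zu|)$ and $|zy|_u=|zy|/(|zu||yu|)$, so that $|xz|_u/|zy|_u=(|xz|\cdot|yu|)/(|zy|\cdot|xu|)$. By (\ref{eq:harmonic_events}) the right-hand side equals $1$; hence $a\perp b$ is equivalent to $|xz|_u=|zy|_u$, i.e.\ $z$ is equidistant from $x$ and $y$ in the semi-metric $|\cdot|_u$. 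Since a harmonic $4$-tuple is nondegenerate, the points $x,y,z$ are distinct points of the line $X_u=X\sm\{u\}\approx\R$, which carries the linear order induced by an orientation of $S^1$ after cutting at $u$. In this picture the pairs $(x,y)$ and $(z,u)$ separate each other exactly when $z$ lies strictly between $x$ and $y$, that is, $z\in xy$ (the arc not containing $u$).

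For separation I would rule out $z\notin xy$. If $z$ lies outside the interval with $x$ between $z$ and $y$, then the arc $zx$ is contained in the arc $zy$, both lying in $X_u$, and Corollary~\ref{cor:interval_monotone} with base point $z$ yields $|zx|_u<|zy|_u$, i.e.\ $|xz|_u<|yz|_u$; symmetrically, if $y$ lies between $x$ and $z$ the same Corollary gives $|yz|_u<|xz|_u$. Either inequality contradicts the equidistance $|xz|_u=|zy|_u$ obtained above. Therefore $z\in xy$, and the pairs $(x,y)$, $(z,u)$ separate each other.

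For uniqueness I would study $f(w)=|xw|_u-|wy|_u$ on the open arc $xy$. Whenever $x<w_1<w_2<y$ in the linear order, the nested arcs $xw_1\sub xw_2\sub X_u$ force $|xw_1|_u<|xw_2|_u$ by Corollary~\ref{cor:interval_monotone}, so $w\mapsto|xw|_u$ is strictly increasing on $xy$; applying the Corollary from the endpoint $y$ to the nested arcs $w_2y\sub w_1y$ shows $w\mapsto|wy|_u$ is strictly decreasing. Thus $f$ is strictly increasing and has at most one zero, and since $z$ is a zero of $f$ by the equidistance, $z$ is the unique midpoint of $xy\sub X_u$. The only genuine care required is bookkeeping: correctly matching the $S^1$-separation condition with betweenness on the cut line $X_u$, and tracking which arc ``not containing $u$'' is meant at each application of Corollary~\ref{cor:interval_monotone}. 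No triangle inequality is used, so the whole argument rests solely on Axiom~(M) through its Corollary, which is precisely the feature singling out monotone M\"obius structures.
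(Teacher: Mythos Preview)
Your argument is correct and follows exactly the paper's approach: pass to the semi-metric $|\cdot|_u$ to read harmonicity as $|xz|_u=|zy|_u$, then invoke Corollary~\ref{cor:interval_monotone} to force $z\in xy$ and to pin down uniqueness. The paper compresses all of this into a single sentence, while you have (correctly) unpacked the separation step and the monotonicity-of-$f$ step; no substantive difference.
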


\begin{proof} We have
$|xz|_u=|zy|_u$.
By Corollary~\ref{cor:interval_monotone},
$z\in xy\sub X_u$
is a unique midpoint, and thus
$a$, $b$
separate each other.
\end{proof}

\begin{lem}\label{lem:harm_exist} For every
$e\in\ay$
and every
$x\in X\sm e$
there is a unique
$y\in X$
such that the pair 
$a=(x,y)$, $e\in\ay$
of events is harmonic.
\end{lem}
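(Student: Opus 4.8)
\section*{Proof proposal}

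The plan is to fix $e=(z,u)$ and $x\in X\sm e$ and to reduce the harmonicity equation (\ref{eq:harmonic_events}) to locating a single point on one prescribed arc. Harmonicity of the $4$-tuple $(x,y,z,u)$ means
$$\phi(y):=\frac{|yu|\cdot|xz|}{|yz|\cdot|xu|}=1,$$
a condition independent of the chosen semi-metric from $M$. To make $\phi$ honestly finite and continuous I would fix a semi-metric $d\in M$ whose infinitely remote point, if it has one, lies in $A\sm\{x\}$, where $A$ is the open arc of $X\sm e$ containing $x$ and $B$ is the complementary open arc; then all four distances entering $\phi$ are finite and positive for every $y\in B$. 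By Lemma~\ref{lem:harm_separate} any harmonic partner $y$ of $x$ separates $e$ together with $x$, so $y$ and $x$ lie in different components of $X\sm e$; since $x\in A$, this forces $y\in B$. Hence it suffices to show that $\phi$ takes the value $1$ at exactly one point of $B$.

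For existence I would use the intermediate value theorem. On $B$ the function $\phi$ is continuous, because $y\mapsto|yu|$ and $y\mapsto|yz|$ are continuous in the $M$-topology by Lemma~\ref{lem:continuity_semimetrics} and $|yz|>0$ there. As $y\to z$ along $B$ one has $|yz|\to0$ while $|yu|\to|zu|>0$, so $\phi(y)\to+\infty$; symmetrically $\phi(y)\to0$ as $y\to u$. Since $B$ is an open arc homeomorphic to $\R$, the intermediate value theorem then yields a point $y\in B$ with $\phi(y)=1$, that is, a harmonic partner of $x$.

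For uniqueness I would show that $\phi$ is strictly monotone along $B$. Take $y_1,y_2\in B$ with $y_1\in zy_2\sub X_u$, so that $y_1$ lies between $z$ and $y_2$ on $B$. In the cyclic order $z,y_1,y_2,u$ the pairs $(y_1,u)$ and $(z,y_2)$ separate each other, so Axiom~(M) gives
$$|y_1u|\cdot|zy_2|>|y_1z|\cdot|uy_2|,$$
which rearranges to $\phi(y_1)>\phi(y_2)$. Thus $\phi$ is strictly decreasing on $B$ and attains the value $1$ at most once there. Combined with Lemma~\ref{lem:harm_separate}, which rules out any solution on the arc $A$, this promotes uniqueness on $B$ to uniqueness on all of $X$, giving the unique $y$ with $(x,y)\perp e$.

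The only genuinely delicate point is arranging the separation correctly so that Axiom~(M) applies in the displayed form; once the pairs $(y_1,u)$ and $(z,y_2)$ are recognized as separating each other, strict monotonicity and hence local uniqueness are immediate, and the sole purpose of Lemma~\ref{lem:harm_separate} is to confine all harmonic partners to the single arc $B$ so that the count on $B$ is the global count.
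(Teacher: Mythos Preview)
Your proof is correct and follows essentially the same route as the paper: continuity plus the intermediate value theorem for existence, strict monotonicity from Axiom~(M) for uniqueness, and Lemma~\ref{lem:harm_separate} to confine the search to the arc $B$. The paper streamlines the computation by choosing the semi-metric with infinitely remote point $x$ itself, so that the harmonicity condition collapses to the midpoint equation $|zy|_x=|yu|_x$ and uniqueness is a one-line appeal to Corollary~\ref{cor:interval_monotone}; your cross-ratio $\phi$ with a generic infinitely remote point $\omega\in A\sm\{x\}$ does the same job with the constant factor $|xz|/|xu|$ carried along.
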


\begin{proof} Let
$e=(z,u)$.
By Axiom~(T) and Lemma~\ref{lem:continuity_semimetrics}, the functions
$f_z,f_u:X\to\wh\R=\R\cup\{\infty\}$,
$f_z(t)=|zt|_x$, $f_u(t)=|ut|_x$
are continuous on
$X=S^1$,
and they take values between
$0=f_z(z)=f_u(u)$
and 
$f_z(u)=f_u(z)>0$
on the segment 
$zu\sub X_x$.
Thus there is a midpoint
$y\in zu\sub X_x$
between
$z$
and
$u$, $|zy|_x=|yu|_x$.
By Corollary~\ref{cor:interval_monotone}, such a point
$y$
is unique.
\end{proof}

\begin{pro}\label{pro:monotone_moeb_to_timed_space} For every
monotone M\"obius structure
$M\in\cM$
there is a uniquely determined timed causal space
$T=\wh T(M)\in\cT$
such that the automorphism group
$G_M$
of
$M$
injects into the automorphism group 
$G_T$
of
$T$:
If
$g:X\to X$
is an
$M$-M\"obius 
automorphism, then
the induced
$\wh g:\ay\to\ay$
is an automorphism of
$T$. 
\end{pro}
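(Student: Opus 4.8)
The statement packages together three things: (1) construct a canonical timed causal space $T=\wh T(M)$ from a monotone M\"obius structure $M$; (2) verify that $T$ really lies in $\cT$, i.e. that the collection $\cH$ of timelike lines satisfies (h1)--(h6) and the time $t$ satisfies (t1)--(t6); (3) check the functorial statement that every $M$-automorphism $g$ lifts to a $T$-automorphism $\wh g$. The construction is essentially forced on us by the motivating picture of sect.~\ref{sect:other_side}: the underlying space is the fixed M\"obius band $\ay$ with its inherent canonical causal structure and light lines; for each event $e\in\ay$ we \emph{define} the timelike line $h_e$ to be the set of all $a\in\ay$ forming an $M$-harmonic 4-tuple with $e$, exactly as in the de Sitter case but now using the harmonicity relation \eqref{eq:harmonic_events} of the given $M$; and the time $t(a,a')$ between events in strong causal relation is defined by the cross-ratio formula $t=|\ln\langle x,y,y',x'\rangle|$ along the common perpendicular. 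The real content is steps (2) and (3), and I expect (2) to absorb the bulk of the work, being parcelled out to the auxiliary results the text already promises.

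First I would set up the timelike-line data. The key input is Lemma~\ref{lem:harm_exist}: for every $e\in\ay$ and every $x\in X\sm e$ there is a \emph{unique} $y$ with $(x,y)$ harmonic to $e$. This immediately gives (h4), and Lemma~\ref{lem:harm_separate} gives (h2); the symmetry of the harmonicity relation \eqref{eq:harmonic_events} in the two pairs gives the duality (h5), hence (h1). Axiom (h3) (any two events on $h_e$ are in the causal relation) should follow from monotonicity: two harmonic partners of $e$ both lie in arcs determined by $e$, and Axiom~(M) controls the separation so that they cannot separate one another. The uniqueness axiom (h6) is the delicate one; here I would invoke the promised existence-and-uniqueness of a common perpendicular --- the text explicitly flags Corollary~\ref{cor:common_perpendicular_prescribed} and Lemma~\ref{lem:unique_common_perpendicular} for this, and I would cite them rather than reprove them. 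With $\cH$ in hand, Proposition~\ref{pro:timelike_lines} already organizes the intersection behaviour of timelike lines, light lines, and the strong causal relation, so that the causal-space axioms are fully verified.

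Next I would verify the time axioms. Axioms (t1), (t2), (t3) are immediate from the definition of $t$ via the (nonnegative, symmetric, vanishing-exactly-on-light-lines) cross-ratio. The geodesic property (t4) is where monotonicity does real analytic work: parametrizing $h_e$ by the reflection map of part~(a) of Proposition~\ref{pro:timelike_lines}, the additivity (t4)(a) and the surjectivity onto all times (t4)(b) should reduce to a monotonicity-and-continuity statement about the function $s\mapsto$ (cross-ratio along $h_e$), which is exactly the kind of estimate Corollary~\ref{cor:interval_monotone} supplies. The structural axioms (t5) and (t6) --- the ones the author warns look strange --- are, per Remark~\ref{rem:axioms_t5_t6}, automatic for timed causal spaces coming from monotone $M$, essentially because both sides of each identity are computed from the \emph{same} cross-ratios of $M$, which are M\"obius-invariant; I would establish them by writing out both sides in a representative semi-metric and observing that harmonicity forces the equality. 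The text defers the detailed confirmation to Proposition~\ref{pro:satisfies_time_axioms} (and notes in Appendix~2 that (t6) is in fact redundant), so I would lean on that.

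Finally, for the automorphism statement, the plan is purely formal once the construction is cross-ratio-intrinsic. An $M$-automorphism $g:X\to X$ preserves cross-ratios by definition, hence preserves the harmonicity relation \eqref{eq:harmonic_events}; therefore the induced map $\wh g:\ay\to\ay$, $\wh g(x,y)=(g(x),g(y))$, carries $h_e$ to $h_{\wh g(e)}$ and preserves the time $t$, since $t$ is built solely from cross-ratios. This shows $\wh g$ is a $T$-automorphism and that $g\mapsto\wh g$ is an injective group homomorphism $G_M\hookrightarrow G_T$. The uniqueness of $T=\wh T(M)$ in $\cT$ is forced because every piece of the structure (timelike lines via harmonicity, time via the cross-ratio) is determined by $M$ with no free choices. \emph{The main obstacle} I anticipate is not any single axiom but the existence-and-uniqueness of the common perpendicular underlying (h6) and the intersection claims of Proposition~\ref{pro:timelike_lines}(b),(c): in the classical de Sitter setting this is the hyperbolic-geometry fact that two divergent geodesics have a unique common perpendicular, and reproving it from the monotonicity axioms alone (the ``de Sitter proof'' the author points to) is the genuinely nontrivial geometric step.
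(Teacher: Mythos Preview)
Your proposal is correct and follows essentially the same route as the paper: define $h_e$ via $M$-harmonicity and $t$ via the cross-ratio formula, then verify (h1)--(h5) from Lemmas~\ref{lem:harm_separate} and~\ref{lem:harm_exist} (Proposition~\ref{pro:safisfies_hypline_axiom}), (h6) from Lemma~\ref{lem:unique_common_perpendicular}, (t1)--(t6) from Proposition~\ref{pro:satisfies_time_axioms}, and finally observe that any $M$-automorphism preserves cross-ratios and hence $\cH$ and $t$. The only small thing you leave implicit is the injectivity argument for $g\mapsto\wh g$: the paper notes that $\wh g=\id_T$ forces $g$ to fix every light line and hence every point of $X$.
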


We prove Proposition~\ref{pro:monotone_moeb_to_timed_space} 
in sections~\ref{subsect:timelike_lines} and \ref{subsect:time_events}.

\subsection{Timelike lines}
\label{subsect:timelike_lines}

Any timelike line
$h$
in
$\ay$
is associated with an event
$e\in\ay$, $h=h_e$,
and is defined as the set of events
$a\in\ay$
such that the pair
$(a,e)$
is harmonic,
$h_e=\set{a\in\ay}{$a\perp e$}$. 
We denote by
$\cH=\cH_M$
the collection of timelike lines in
$\ay$.

\begin{pro}\label{pro:safisfies_hypline_axiom} The collection
$\cH$
satisfies Axioms~(h1)--(h5).
\end{pro}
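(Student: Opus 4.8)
The plan is to verify the five axioms essentially one at a time, with Axioms (h2), (h4), (h5) reducing directly to the harmonicity lemmas already in hand, and with Axiom~(h3) carrying all the content. Axiom~(h1) I treat as immediate from the definition: each $e$ determines the set $h_e=\set{a\in\ay}{$a\perp e$}$, and a timelike line is by definition such a set. Injectivity of $e\mapsto h_e$, needed so that $e$ and $h_e$ are genuinely dual, I obtain by identifying the two points of $e$ with the ends of $h_e$ at infinity: as $x\to z$ along $e^+$ one has $(x,\rho_e(x))\to(z,z)$ and as $x\to u$ one has $(x,\rho_e(x))\to(u,u)$, which recovers $e$ from $h_e$. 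Axiom~(h2) is exactly Lemma~\ref{lem:harm_separate}: if $a\in h_e$, i.e. $a\perp e$, then $a$ and $e$ separate each other. Axiom~(h4) is exactly Lemma~\ref{lem:harm_exist}: for $x\in X\sm e$ there is a unique $y$ with $(x,y)\perp e$, namely $x_e=(x,\rho_e(x))$. Axiom~(h5) is the symmetry of the harmonicity relation \eqref{eq:harmonic_events} under interchanging the two pairs, so $a\perp e$ iff $e\perp a$, i.e. $a\in h_e$ iff $e\in h_a$.

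The heart of the argument is Axiom~(h3): two events $a=(x,y)$ and $a'=(x',y')$ on the same line $h_e$ do not separate each other. Writing $e=(z,u)$, by Lemma~\ref{lem:harm_separate} each of $a,a'$ has one point in each open arc $\intr e^+$, $\intr e^-$, so after relabelling I may assume $x,x'\in\intr e^+$ and $y,y'\in\intr e^-$. Inspecting the possible cyclic positions of the six points shows that $a,a'$ separate each other precisely when the induced bijection $\rho_e\colon\intr e^+\to\intr e^-$ is order-preserving on $\{x,x'\}$, and fail to separate precisely when it is order-reversing. Thus (h3) is equivalent to order-reversal of $\rho_e$, which I will establish by ruling out the order-preserving configuration.

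Assume, for contradiction, the cyclic order $z,x,x',u,y,y'$. Cutting the circle at $u$ makes $X_u$ a line on which the points are ordered $y<y'<z<x<x'$. By Lemma~\ref{lem:harm_separate}, $z$ is the $|\cdot|_u$-midpoint of the arc $xy$ not containing $u$ and of the arc $x'y'$ not containing $u$, whence $|zy|_u=|zx|_u$ and $|zy'|_u=|zx'|_u$. Corollary~\ref{cor:interval_monotone}, applied to the nested arcs $zy'\sub zy\sub X_u$ and $zx\sub zx'\sub X_u$, gives $|zy'|_u<|zy|_u$ and $|zx|_u<|zx'|_u$. Chaining these, $|zy'|_u<|zy|_u=|zx|_u<|zx'|_u=|zy'|_u$, a contradiction. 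Hence $\rho_e$ reverses order and (h3) holds.

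The only genuine obstacle is this equivalence-plus-contradiction for (h3); the remaining four axioms are essentially bookkeeping against the quoted lemmas. I should also note that the proof of (h3) does not presuppose that $\rho_e$ is continuous, so there is no circularity: once (h2), (h3), (h4) are verified for $\cH_M$, Lemma~\ref{lem:reflection} applies to show $\rho_e$ is an involutive homeomorphism, and this in turn retroactively justifies the limit computation $(x,\rho_e(x))\to(z,z),(u,u)$ used for the injectivity half of (h1).
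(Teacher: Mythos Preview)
Your proof is correct and follows essentially the same approach as the paper. The paper's treatment is terser: it dispatches (h1) and (h5) in one line as holding ``by definition'' (without addressing injectivity of $e\mapsto h_e$, which is not part of (h1) as stated and is established only later in Proposition~\ref{pro:timelike_lines}(a)), and for (h3) it simply says that $z$ is the common midpoint of $xy$ and $x'y'$ in $X_u$, so by Axiom~(M) the pairs cannot separate. Your contradiction argument for (h3) is exactly the unpacking of that one-line claim via Corollary~\ref{cor:interval_monotone}, so the two proofs coincide in substance; your additional discussion of injectivity and of the ordering of the limit argument is extra care rather than a different route.
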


\begin{proof} Axioms~(h1), (h5) hold by definition, (h2)
follows from Lemma~\ref{lem:harm_separate}, (h4) from 
Lemma~\ref{lem:harm_exist}.

To check (h3), assume
$e=(z,u)\in\ay$, $a=(x,y)$, $a'=(x',y')\in h_e$.
Then
$z$
is the midpoint of the segments
$xy$, $x'y'\sub X_u$.
Thus by Axiom~(M), the pairs
$(x,y)$, $(x',y')$
do not separate each other, that is, the events
$a$
and
$a'$
are in the causal relation. Hence, (h3).
\end{proof}

We say that an event
$a\in\ay$
is a {\em common perpendicular} to events
$e$, $e'\in\ay$,
if
$e$, $e'\in h_a$.

\begin{cor}\label{cor:common_perpendicular_prescribed} Given
$e$, $e'\in\ay$
in the strong causal relation, there is a common perpendicular
$a\in\ay$
to
$e$, $e'$.
\end{cor}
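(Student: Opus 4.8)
The plan is to deduce the statement from the general facts about causal spaces already assembled, using Remark~\ref{rem:axiom_h6} to avoid any appeal to Axiom~(h6), which has not yet been verified for $\cH_M$. By Proposition~\ref{pro:safisfies_hypline_axiom} the collection $\cH=\cH_M$ satisfies Axioms~(h1)--(h5), and by Remark~\ref{rem:axiom_h6} every \emph{existence} conclusion of Proposition~\ref{pro:timelike_lines} is valid once (h1)--(h5) hold, in particular the existence part of (c): two distinct timelike lines $h_e$, $h_{e'}$ meet in a common event provided $e$, $e'$ are in the strong causal relation. Since $e$, $e'$ are assumed to be in the strong causal relation, this produces an event $a\in h_e\cap h_{e'}$, and then (h5) converts $a\in h_e$ into $e\in h_a$ and $a\in h_{e'}$ into $e'\in h_a$; thus $e$, $e'\in h_a$ and $a$ is the required common perpendicular.

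For completeness I would unwind why the existence part of Proposition~\ref{pro:timelike_lines}(b)--(c) goes through without (h6), since that is where the real content sits. The reflections $\rho_e$, $\rho_{e'}:X\to X$ are involutive homeomorphisms by Lemma~\ref{lem:reflection} (whose proof uses only (h2)--(h4)). Because $e$, $e'$ are in the strong causal relation, they share no point and do not separate each other, so $e'$ lies in the interior of one of the two closed arcs into which $e$ divides $X$; writing these arcs as $e^+$, $e^-$, I would arrange $e'\sub\intr e^-$, so that $e^+$ is disjoint from $e'$.

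The crux is to show that $\rho=\rho_e\circ\rho_{e'}$ maps $e^+$ strictly into its interior, $\rho(e^+)\sub\intr e^+$. One tracks the two reflections on the arcs: $\rho_{e'}$ interchanges the two closed arcs of $e'$ and carries $e^+$ (which lies in the interior of the arc of $e'$ opposite to the short arc between the two points of $e'$) into that short arc, hence into $\intr e^-$; then $\rho_e$ interchanges $e^+$ and $e^-$ while fixing the two points of $e$, so it sends $\intr e^-$ into $\intr e^+$. Composing gives $\rho(e^+)\sub\intr e^+$. Since $\ov h_e$ identifies $e^+$ with a compact interval by Proposition~\ref{pro:timelike_lines}(a), a continuous self-map of an interval sending it into its interior has a fixed point $x\in\intr e^+$, $\rho(x)=x$, by the intermediate value theorem. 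Setting $y=\rho_{e'}(x)$ and using that $\rho_e$ is involutive, the fixed-point equation $\rho_e(\rho_{e'}(x))=x$ yields $y=\rho_e(x)$ as well; as $x\notin e\cup e'$, the event $a=(x,y)$ is genuine and equals both $x_{e'}\in h_{e'}$ and $x_e\in h_e$, so that (h5) gives $e$, $e'\in h_a$.

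The main obstacle is the arc-bookkeeping behind $\rho(e^+)\sub\intr e^+$: one must use the full strength of the strong causal relation (no common point, no separation) both to place $e'$ in the interior of $e^-$ and to keep every containment strict, so that the fixed point lands in the open arc $\intr e^+$ and the resulting pair $a=(x,y)$ is a nondegenerate event rather than a degenerate limit on the boundary.
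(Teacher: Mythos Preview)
Your proof is correct and follows essentially the same approach as the paper: invoke Proposition~\ref{pro:safisfies_hypline_axiom} for (h1)--(h5), then apply the existence part of Proposition~\ref{pro:timelike_lines} via Remark~\ref{rem:axiom_h6}. The paper cites part~(b) directly, whereas you cite part~(c) and then convert with (h5), but since (c) is just the (h5)-dual of (b) this is immaterial; your optional unwinding of the fixed-point argument also mirrors the paper's proof of Proposition~\ref{pro:timelike_lines}(b).
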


\begin{proof} By Proposition~\ref{pro:safisfies_hypline_axiom},
the collection
$\cH=\cH_M$
of timelike lines in
$\ay$
determined by the M\"obius structure
$M$
satisfies Axioms~(h1)--(h5). Thus the assertion follows from
Proposition~\ref{pro:timelike_lines}(b), see Remark~\ref{rem:axiom_h6}.
\end{proof}

The proof of (h6) we postpone to sect.~\ref{subsect:time_events},
see Lemma~\ref{lem:unique_common_perpendicular}.  

\subsection{Time between events}
\label{subsect:time_events}

The time between events
$a$, $a'\in\ay$
is defined if and only they are in the causal relation. We do this
essentially as in sect.~\ref{subsect:recoveting_desitter} using
formula (\ref{eq:distance_log}). First of all, the time between
events on a light line by definition is zero,
$t(a,a')=0$
for 
$a$, $a'\in p_x$, $x\in X$.

Next, assume that
$a$, $a'\in\ay$
are in the strong causal relation. Then by Corollary~\ref{cor:common_perpendicular_prescribed}
there is a common perpendicular 
$e\in\ay$
to
$a$, $a'$,
that is,
$a$, $a'\in h_e$.
We let
$e=(x,y)$, $a=(z,u)$, $a'=(z',u')$.
Then by definition
\begin{equation}\label{eq:time}
t_e(a,a')=\left|\ln\frac{|xz'|\cdot|yz|}{|xz|\cdot|yz'|}\right|
\end{equation}
for some and hence any semi-metric on
$X$
from
$M$.
It follows from harmonicity of
$(a,e)$
and
$(a',e)$
that 
\begin{equation}\label{eq:time_different}
t_e(a,a')=\left|\ln\frac{|xu'|\cdot|yu|}{|xu|\cdot|yu'|}\right|=
             \left|\ln\frac{|xu'|\cdot|yz|}{|xz|\cdot|yu'|}\right|=
             \left|\ln\frac{|xz'|\cdot|yu|}{|xu|\cdot|yz'|}\right|,
\end{equation}
and we often use these different representations of
$t_e(a,a')$.

\begin{lem}\label{lem:unique_common_perpendicular} Given distinct
$a$, $a'\in\ay$
in the causal relation, there is at most one common perpendicular
$b\in\ay$
to
$a$, $a'$.
In particular, the time
$t(a,a')=t_e(a,a')$
is well defined, and Axiom~(h6) is fulfilled for the collection
$\cH$
of timelike lines.
\end{lem}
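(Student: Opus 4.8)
The plan is to prove the uniqueness of the common perpendicular directly, and then read off both the well-definedness of the time and Axiom~(h6) as formal consequences. First I dispose of the light-line case: if $a$, $a'$ lie on a common light line $p_x$, say $a=(x,x')$ and $a'=(x,y')$ with $x'\neq y'$, then no common perpendicular can exist, since a common perpendicular $b$ would satisfy $a,a'\in h_b$ and hence, by (h4) applied to $h_b$, force $\rho_b(x)=x'$ and $\rho_b(x)=y'$, which is absurd (note $x\notin b$ by nondegeneracy of harmonic $4$-tuples). Thus the only case to treat is that $a$, $a'$ are in the strong causal relation, where a common perpendicular exists by Corollary~\ref{cor:common_perpendicular_prescribed}, and the task is to show there is no second one.

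The next step is to convert ``being a common perpendicular'' into a single scalar equation along the timelike line $h_a$. Write $a=(x_0,y_0)$ and $a'=(x_0',y_0')$; since $a$, $a'$ do not separate each other, both $x_0',y_0'$ lie in the interior of one arc $a^-$ of $a$, and I parametrize $h_a$ by $x\in\intr a^+\mapsto b_x:=(x,\rho_a x)$, which is an order-preserving bijection onto $h_a$ by Proposition~\ref{pro:timelike_lines}(a). By (h5), $b_x$ is a common perpendicular to $a$, $a'$ exactly when $b_x\in h_{a'}$, that is, when $b_x\perp a'$. By Lemma~\ref{lem:harm_separate} this happens iff $x_0'$ is the midpoint, in the chart $X_{y_0'}$, of the arc joining $x$ and $\rho_a x$, so I introduce
\[
F(x)=|x\,x_0'|_{y_0'}-|(\rho_a x)\,x_0'|_{y_0'},
\]
and the common perpendiculars through $\intr a^+$ are precisely the zeros of $F$ (each yielding a single fixed point of $\rho_a\circ\rho_{a'}$ in $\intr a^+$, the other in $\intr a^-$).

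The heart of the argument is to show that $F$ is strictly monotone, and hence has at most one zero. As $x$ increases along $\intr a^+$, the arc in $X_{y_0'}$ from $x$ to the fixed point $x_0'$ varies monotonically by inclusion, while the image point $\rho_a x$ moves in the opposite direction along $\intr a^-$ because $\rho_a$ reverses order, so the arc from $\rho_a x$ to $x_0'$ varies oppositely. Corollary~\ref{cor:interval_monotone} then forces one of the two terms of $F$ to be strictly increasing and the other strictly decreasing, whence $F$ is strictly monotone and vanishes at most once. I expect the bookkeeping of the arc configuration to be the main obstacle: checking that $x_0'$, $y_0'$ genuinely sit in $\intr a^-$, that both $\rho_a x$ and the comparison are formed in the correct arcs of $X_{y_0'}$, and that the two monotonicities really carry opposite signs. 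This is exactly where the strong causal hypothesis and Axiom~(M) enter, and it runs parallel to the uniqueness argument already used in Lemma~\ref{lem:harm_exist}.

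Finally I assemble the consequences. Uniqueness of the common perpendicular means that, in the strong causal case, the event $e$ with $a,a'\in h_e$ is unique, so the value $t(a,a'):=t_e(a,a')$ of formula~(\ref{eq:time}) is independent of any choice and the time is well defined (in the light-line case it equals $0$ by definition). For Axiom~(h6), a timelike line $h_e$ contains both $a$ and $a'$ iff, by (h5), $e\in h_a\cap h_{a'}$, i.e.\ iff $e$ is a common perpendicular to $a$, $a'$; uniqueness of the latter is precisely the assertion that at most one timelike line passes through $a$ and $a'$, which completes the collection $\cH$ as a family satisfying (h1)--(h6).
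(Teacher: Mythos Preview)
Your reduction of the problem to finding zeros of $F(x)=|xx_0'|_{y_0'}-|(\rho_a x)x_0'|_{y_0'}$ along $h_a$ is sound, and the light-line case is handled correctly. The gap is in the monotonicity claim: it is \emph{not} true that the two summands of $F$ vary with opposite sign. Restrict to the subarc of $\intr a^+$ on which $\rho_a x$ lies in the arc of $a'$ not containing $a$ (which is where any zero of $F$ must lie, by Lemma~\ref{lem:harm_separate}). There, as $x$ moves from $\rho_a(x_0')$ towards $\rho_a(y_0')$, the point $\rho_a x$ moves from $x_0'$ towards $y_0'$; so the arc from $\rho_a x$ to $x_0'$ in $X_{y_0'}$ \emph{grows}, and $|(\rho_a x)x_0'|_{y_0'}$ increases. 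Meanwhile the arc from $x$ to $x_0'$ in $X_{y_0'}$ also grows (since $x$ moves away from $x_0'$ in the same chart), so $|xx_0'|_{y_0'}$ increases as well. Both terms move in the same direction, and Corollary~\ref{cor:interval_monotone} alone does not control the sign of their difference. For the canonical structure with $a=(0,\infty)$, $a'=(-1,-2)$ one has $F(x)=\tfrac{x+1}{x+2}-\tfrac{x-1}{2-x}$ on $(1,2)$: both terms are increasing, and $F$ does turn out to be monotone, but only because of a rate comparison that Axiom~(M) does not supply in general.

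The paper's argument avoids this obstacle by working with a cross-ratio rather than a single distance difference. If $b=(z,u)$ and $b'=(z',u')$ are two common perpendiculars, both lie on $h_a$, hence do not separate each other, and one obtains (after relabelling) the order $zz'yy'u'ux'$ on $X_x$ for $a=(x,y)$, $a'=(x',y')$. Computing $e^{t_b(a,a')}=\frac{|zx'|_x}{|x'u|_x}$ and $e^{t_{b'}(a,a')}=\frac{|z'x'|_x}{|x'u'|_x}$ and using Corollary~\ref{cor:interval_monotone} on the nested arcs gives $t_b\ge t_{b'}$; repeating the computation in the chart $X_y$ gives $t_b\le t_{b'}$, forcing $b=b'$. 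The decisive device is the passage between the two charts $X_x$ and $X_y$, which supplies the second inequality that your single-chart approach cannot extract from~(M) alone.
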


\begin{proof} The idea is taken from \cite{BS4}. If events
$a$, $a'$
are on a light line, then they cannot lie on a timelike line,
say 
$h_e$,
because by Axiom~(h1) they both must separate
$e$,
which would contradict (h4). Thus we assume that
$a$, $a'$
are not on a light line.

Assume there are common perpendiculars
$b=(z,u)$, $b'=(z',u')\in\ay$
to
$a$, $a'$, 
that is
$b\perp a,a'$ 
and
$b'\perp a,a'$,
or which is the same,
$b$, $b'\in h_a\cap h_{a'}$.
By already established Axiom~(h3), see Proposition~\ref{pro:safisfies_hypline_axiom},
$b$
and 
$b'$
do not separate each other. 
Let
$a=(x,y)$, $a'=(x',y')$.
Without loss of generality, we assume that on 
$X_x$
we have the following order of points
$zz'yy'u'ux'$.

By Axiom~(h5),
$a$, $a'\in h_b\cap h_{b'}$.
The times
$t=t_b(a,a')$, $t'=t_{b'}(a,a')$
are already defined by (\ref{eq:time}). Computing them in 
a semi-metric of the M\"obius structure with infinitely remote point
$x$,
we obtain
$$e^t=\frac{|zx'|}{|x'u|},\quad
  e^{t'}=\frac{|z'x'|}{|x'u'|}.$$
Using the order of points
$zz'yy'u'ux'$
on
$X_x$,
we have, in particular, that the interval
$z'x'$
is contained in the interval
$zx'$.
By Corollary~\ref{cor:interval_monotone},
$|zx'|\ge|z'x'|$.
Similarly,
$x'u\sub x'u'$
and hence
$|x'u|\le|x'u'|$.
Thus
$t\ge t'$
and if 
$b'\neq b$,
the inequality is strong. Applying this argument with infinitely remote point
$y$,
we obtain
$t\le t'$.
Therefore
$t=t'$
and
$b=b'$. 
\end{proof}

\begin{pro}\label{pro:satisfies_time_axioms} The time
between events in
$\ay$
defined above satisfies Axioms~(t1)--(t6). 
\end{pro}

\begin{proof} Axiom~(t1) is satisfied by the definition of the time
$t$.

Axiom~(t2): If events
$a$, $a'$
are on a light line, then
$t(a,a')=0$
by definition. Conversely, assume
$t(a,a')=0$
for events
$a$, $a'\in\ay$
in the causal relation, which are not on a light line, in particular,
$a\neq a'$.
Then by Lemmas~\ref{cor:common_perpendicular_prescribed}
and \ref{lem:unique_common_perpendicular}, there is a unique
$e\in\ay$
with
$a$, $a'\in h_e$. 
We let
$e=(x,y)$, $a=(z,u)$, $a'=(z',u')$.
Since
$a\neq a'$,
we have
$z'\neq z,u$.
On the other hand, it follows from (\ref{eq:time}) that
$|xz'|\cdot|yz|=|xz|\cdot|yz'|$
for any semi-metric on
$X$
from
$M$.
In particular,
$|xz'|_y=|xz|_y$,
and by monotonicity (M),
$x$
is the midpoint between
$z$, $z'$
in
$X_y$.
Hence,
$\rho_e(z)=z'=u$,
a contradiction.

Axiom~(t3) follows from the definition of the time
$t$
and (\ref{eq:time}).

Axiom~(t4a): Let
$e$, $e'$, $e''\in h_a$
with 
$e\le e'\le e''$.
If
$e'$
coincides with 
$e$
or
$e''$,
then the required equality is trivial. Thus we assume that
$e<e'<e''$.
Without loss of generality, we can assume that for 
$a=(x,y)$, $e=(z,u)$, $e'=(z',u')$, $e''=(z'',u'')$,
the points
$z$, $z'$, $z''$
lie on one and the same arc determined by
$a$
in the order
$xzz'z''y$.
Then
$$\exp(t(e,e'))=\frac{|xz'|\cdot|yz|}{|xz|\cdot|yz'|},\quad 
  \exp(t(e',e''))=\frac{|xz''|\cdot|yz'|}{|xz'|\cdot|yz''|},$$
and we obtain
$$\exp(t(e,e')+t(e',e''))=\frac{|xz''|\cdot|yz|}{|xz|\cdot|yz''|}
  =\exp(t(e,e'')).$$

Axiom~(t4b): Given an event
$e=(z,u)$
on a timelike line
$h_a\sub\ay$
with 
$a=(x,y)$,
and 
$s>0$,
we take a semi-metric from
$M$
with the infinitely remote point
$y$.
Then
$|xz|_y=|xu|_y:=t>0$,
and without loss of generality, we can assume that 
$t=1$.
Note that the function
$f_x:X_y\to\R$, $f_x(x')=|xx'|_y$,
is continuous, see Lemma~\ref{lem:continuity_semimetrics},
and monotone, see Corollary~\ref{cor:interval_monotone}.
It varies from
$0=f_x(x)$
to
$\infty=f_x(y)$.
Thus there are 
$z_-\in xz$, $z_+\in zy$
with 
$f_x(z_\pm)=e^{\pm s}$.
For the events
$e_\pm=(z_\pm,u_\pm)\in h_a$,
where 
$u_\pm=\rho_a(z_\pm)$,
we have 
$$t(e,e_\pm)=\left|\ln\frac{|xz_\pm|_y}{|xz|_y}\right|=|\ln|xz_\pm|_y|=s.$$
Choosing a decomposition
$X=e^+\cup e^-$
determined by 
$e$
so that 
$y\in e^+$,
we have
$e_\pm\in h_a\cap C_e^\pm$
and 
$t(e,e_\pm)=s$.

Axiom~(t5): Let
$e=(x,y)$, $d=(z,u)$
be events in the strong causal relation. Then we have by (\ref{eq:time}), 
(\ref{eq:time_different})
$$t(z_e,u_e)=\left|\ln\frac{|xu|\cdot|yz|}{|xz|\cdot|yu|}\right|=t(x_d,y_d).$$

Axiom~(t6): Given
$e=(x,y)\in\ay$, $d=(z,u)\in h_e$, 
we put
$a=(x,z)$, $b=(x,u)$.
Then we have by (\ref{eq:time}), (\ref{eq:time_different})
$$t(y_a,u_a)=\left|\ln\frac{|xy|\cdot|zu|}{|xu|\cdot|zy|}\right|,\quad 
  t(y_b,z_b)=\left|\ln\frac{|xy|\cdot|zu|}{|xz|\cdot|yu|}\right|.$$
On the other hand, the pair
$(a,e)$
is harmonic, thus
$|xu|\cdot|zy|=|xz|\cdot|yu|$.
Hence,
$t(y_a,u_a)=t(y_b,z_b)$.
\end{proof}

\begin{proof}[Proof of Proposition~\ref{pro:monotone_moeb_to_timed_space}]
Given a monotone M\"obius structure
$M\in\cM$,
we have defined above a class
$\cH=\cH_M$
of timelike lines in
$\ay$
that satisfies Axioms~(h1)--(h6), and a time
$t$
on
$(\ay,\cH)$
that satisfies Axioms~(t1)--(t6). Therefore, a timed causal space
$T=(\ay,\cH,t)$, $T=\wh T(M)$,
is defined.

Let
$g:X\to X$
be an
$M$-M\"obius
automorphism. Then the induced
$\wh g:\ay\to\ay$
preserves the causality structure, the class of timelike lines
$\cH$
and the time 
$t$
because the last two are defined via cross-ratios. Thus
$\wh g$
is an automorphism of
$T$.
If
$\wh g=\id_T$,
then, in particular, it preserves every light line.
Hence,
$g=\id$,
and the group
$G_M$
of the
$M$-automorphisms
injects into the group 
$G_T$
of the
$T$-automorphisms. 
\end{proof}

\section{Timed causal spaces and M\"obius structures}
\label{sect:timed_causal_moeb_structures}

In this section we adopt the following more advanced point of view to
M\"obius structures, see \cite{Bu1}.

\subsection{M\"obius and sub-M\"obius structures}
\label{subsect:moeb_sub-moeb}

Let
$X$
be a set,
$\reg\cP_4=\reg\cP_4(X)$,
see sect.~\ref{subsect:semi-metrics_topology}. For any semi-metric
$d$
on
$X$
we have three cross-ratios
$$q\mapsto \crr_1(q)=\frac{|x_1x_3||x_2x_4|}{|x_1x_4||x_2x_3|};
  \crr_2(q)=\frac{|x_1x_4||x_2x_3|}{|x_1x_2||x_3x_4|};
  \crr_3(q)=\frac{|x_1x_2||x_3x_4|}{|x_2x_4||x_1x_3|}$$
for 
$q=(x_1,x_2,x_3,x_4)\in\reg\cP_4$,
whose product equals 1, where
$|x_ix_j|=d(x_i,x_j)$.
We associate with 
$d$
a map 
$M_d:\reg\cP_4\to L_4$
defined by
\begin{equation}\label{eq:moeb_map}
M_d(q)=(\ln\crr_1(q),\ln\crr_2(q),\ln\crr_3(q)),
\end{equation}
where
$L_4\sub\R^3$
is the 2-plane given by the equation
$a+b+c=0$.

Two semi-metrics
$d$, $d'$
on
$X$
are M\"obius equivalent if and only
$M_d=M_{d'}$.
Thus a M\"obius structure on
$X$
is completely determined by a map 
$M=M_d$
for any semi-metric
$d$
of the M\"obius structure, and we often identify a M\"obius structure
with the respective map 
$M$.
A bijection
$f:X\to X$
is an 
$M$-automorphism
if and only if
$M\circ\ov f(q)=M(q)$
for every ordered 4-tuple
$q\in\reg\cP_4$
for the induced
$\ov f:\reg\cP_4\to\reg\cP_4$.

Let 
$S_n$
be the symmetry group of 
$n$
elements. The group
$S_4$
acts on
$\reg\cP_4$
by entries permutations of any
$q\in\reg\cP_4$.
The group
$S_3$
acts on
$L_4$
by signed permutations of coordinates, where a permutation
$\si:L_4\to L_4$
has the sign 
``$-1$'' 
if and only if
$\si$
is odd. 

The {\em cross-ratio} homomorphism
$\phi:S_4\to S_3$
can be described as follows: a permutation of a tetrahedron
ordered vertices 
$(1,2,3,4)$
gives rise to a permutation of pairs of opposite edges
$((12)(34),(13)(24),(14)(23))$. 
Thus the kernel 
$K$
of
$\phi$
consists of four elements 1234, 2143, 4321, 3412,
and is isomorphic to the dihedral group
$D_4$
of a square automorphisms. We denote by 
$\sign:S_4\to\{\pm 1\}$
the homomorphism that associates to every odd permutation the sign 
``$-1$''.

One easily check that any M\"obius structure 
$M:\reg\cP_4\to L_4$
is equivariant with respect to the signed cross-ratio homomorphism,
\begin{equation}\label{eq:signed_cross-ratio_homomorphism}
M(\pi(q))=\sign(\pi)\phi(\pi)M(q) 
\end{equation}
for every
$q\in\reg\cP_4$, $\pi\in S_4$,
where
$\phi:S_4\to S_3$
is the cross-ratio homomorphism.

A {\em sub-M\"obius structure} on
$X$
is a map 
$M:\cP_4\to L_4$
with the basic property (\ref{eq:signed_cross-ratio_homomorphism})
(we drop details related to degenerated 4-tuples, which can be found in
\cite{Bu1}).
Now, we describe a criterion for a sub-M\"obius structure to be
M\"obius. Given an ordered tuple
$q=(x_1,\dots,x_k)\in X^k$,
we use notation
$$q_i=(x_1,\dots,x_{i-1},x_{i+1},\dots,x_k)$$
for
$i=1,\dots,k$.

For a sub-M\"obius structure
$M$
on
$X$
we define its {\em codifferential}
$\de M:\reg\cP_5\to L_5=L_4^5$
by
$$(\de M(q))_i=M(q_i),\ i=1,\dots,5.$$
Furthermore, we use notation
$M(q_i)=(a(q_i),b(q_i),c(q_i))$, $i=1,\dots,5$, $q\in\reg\cP_5$.
The following theorem has been proved in \cite[Theorem~3.4]{Bu1}.

\begin{thm}\label{thm:submoeb_moeb} A sub-M\"obius structure
$M$
on
$X$
is a M\"obius structure if and only if for every nondegenerate 5-tuple
$q\in X^5$
the following conditions (A), (B) are satisfied
\begin{itemize}
 \item [(A)] $b(q_1)+b(q_4)=b(q_3)-a(q_1)$;
 \item[(B)] $b(q_2)=-a(q_4)+b(q_1)$.
\end{itemize}
\end{thm}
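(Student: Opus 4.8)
The plan is to treat the two implications separately: necessity is a direct identity, while sufficiency is a reconstruction-plus-consistency argument in which (A) and (B) play the role of integrability conditions.

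For necessity, suppose $M=M_d$ for a semi-metric $d$ and abbreviate $\ell_{ij}=\ln d(x_i,x_j)$ for the entries of a nondegenerate $5$-tuple $q\in\reg\cP_5$. By (\ref{eq:moeb_map}) each component $a(q_i),b(q_i)$ is an explicit signed sum of four of the numbers $\ell_{ij}$. Substituting these expressions into (A) and into (B) and collecting terms, every $\ell_{ij}$ cancels, so both identities hold for any semi-metric; for instance both sides of (A) reduce to $\ell_{15}+\ell_{25}+\ell_{34}-\ell_{12}-\ell_{35}-\ell_{45}$. This direction uses nothing beyond the equivariance (\ref{eq:signed_cross-ratio_homomorphism}) already built into a sub-M\"obius structure.

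For sufficiency I would reconstruct a semi-metric from $M$. Writing $\ell_{xy}=\ln d(x,y)$, note that $M_d$ is unchanged under the gauge transformation $\ell_{xy}\mapsto\ell_{xy}+g_x+g_y$ for any function $g$ on $X$, since each $\ln\crr_k$ is a difference of two matching-sums which absorb $g$ identically; so $\ell$ is determined by $M$ only up to this gauge, and I fix the gauge by a frame. First fix three distinct points $p,r,s\in X$ and use the gauge to set $\ell_{pr}=\ell_{ps}=\ell_{rs}=0$. For each $x\in X\sm\{p,r,s\}$ the value $M(p,r,s,x)\in L_4$ supplies two independent log-cross-ratio equations which, together with the one remaining normalization of $g_x$, determine $\ell_{px},\ell_{rx},\ell_{sx}$. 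Finally, for distinct $x,y\in X\sm\{p,r,s\}$ define $\ell_{xy}$ by solving, inside the $4$-tuple $(p,r,x,y)$, the single equation expressing one log-cross-ratio in terms of the already-fixed $\ell_{pr},\ell_{px},\ell_{py},\ell_{rx},\ell_{ry}$ and the prescribed value $M(p,r,x,y)$. Since the resulting $\ell$ is real-valued, $d=\exp\ell$ is a genuine finite symmetric semi-metric.

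The heart of the proof, and the step I expect to be the main obstacle, is to show that this $\ell$ is well defined and that $M_d=M$ on every $4$-tuple. The definition of $\ell_{xy}$ used the frame pair $(p,r)$, so consistency requires that $(p,s)$ and $(r,s)$ yield the same $\ell_{xy}$, and more generally that cross-ratios of $4$-tuples avoiding the frame come out correctly. Both are controlled by the $5$-point relations: applying (A) and (B) to the $5$-tuple $(p,r,s,x,y)$ ties the $M$-values of the three $4$-subsets containing $\{x,y\}$, namely $(r,s,x,y),(p,s,x,y),(p,r,x,y)$, to those of the frame-plus-one subsets $(p,r,s,x),(p,r,s,y)$, which are already pinned down, and this is exactly what forces the three candidate values of $\ell_{xy}$ to agree. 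To verify $M_d=M$ on a $4$-tuple $(x,y,z,u)$ disjoint from the frame I would connect it to frame-containing $4$-tuples through the $5$-tuples $(p,x,y,z,u)$ and the like, again invoking (A), (B), while the equivariance (\ref{eq:signed_cross-ratio_homomorphism}) is used throughout to match the orderings and signs in which the various $4$-subsets naturally occur. The combinatorial crux is thus to express $M_d(x,y,z,u)$ as an integer combination of $M$-values over the $4$-subsets of a single $5$-tuple and to reduce the required equality to precisely the relations (A), (B) together with their $S_5$-translates. Degenerate $4$-tuples are handled as in \cite{Bu1}.
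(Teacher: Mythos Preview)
The paper does not actually contain a proof of this theorem: it is quoted as \cite[Theorem~3.4]{Bu1} and used as a black box in Theorem~\ref{thm:submoeb_is_moeb}. So there is no in-paper proof to compare your proposal against.

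That said, your outline is the natural argument and almost certainly matches the one in \cite{Bu1}: necessity is a telescoping identity in the $\ell_{ij}$, and sufficiency is a frame-based reconstruction of a semi-metric together with a consistency check supplied by (A) and (B) and their $S_5$-translates. One point you should make explicit: when you define $\ell_{xy}$ from $M(p,r,x,y)$, only the $b$- and $c$-components of $M(p,r,x,y)$ involve $\ell_{xy}$; the $a$-component equals $\ell_{px}+\ell_{ry}-\ell_{py}-\ell_{rx}$ and is therefore a \emph{constraint} on quantities you have already fixed from $M(p,r,s,x)$ and $M(p,r,s,y)$. Verifying that this constraint holds is precisely the first place (A)/(B) enter, applied to the $5$-tuple $(p,r,s,x,y)$, and it is logically prior to checking that the frame pairs $(p,s)$ and $(r,s)$ give the same $\ell_{xy}$. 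Once this is flagged, your consistency bookkeeping goes through; the extension to $4$-tuples disjoint from the frame via intermediate $5$-tuples is exactly as you describe, and Remark~\ref{rem:equivalent_conditions} explains why (A) and (B) together with $S_5$-equivariance suffice.
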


\begin{rem}\label{rem:equivalent_conditions} Conditions (A) and (B)
are in fact equivalent to each other. This follows from 
$S_5$-symmetry
of the codifferential
$\de M$
and is explained in details
in \cite{Bu2}.
\end{rem}

\subsection{Timed causal space and a sub-M\"obius structure}
\label{subsect:timed_sub-moeb}

We begin with
\begin{rem}\label{rem:equivalent_monotone}
 Axiom~(M) for monotone M\"obius structures on the circle is equivalent to that
$a(q)<0$
and
$b(q)>0$,
where
$M(q)=(a(q),b(q),c(q))\in L_4$
for any 
$q=(x,y,z,u)\in\reg\cP_4$
such that the pairs
$(x,u)$
and 
$(y,z)$
separate each other. This follows from Eq.~\ref{eq:moeb_map}.
\end{rem}

With every timed causal space
$T=(\ay,\cH,t)$
we associate a sub-M\"obius structure
$M$
on
$X=S^1$
as follows. 

We fix an orientation of
$S^1$.
Then for any 4-tuple
$q\in\reg\cP_4$
we have a well defined {\em cyclic order}
$\co(q)$.
Let
$A\sub\cP_4$
be the set 
$\set{\pi q}{$\pi\in S_4$}$, $A=A(q)$,
for a given
$q\in\cP_4$.
Note that the cyclic order
$\co(\pi q)=\co(q)$
is independent of
$\pi\in S_4$,
and we denote it by
$\co(A)$.

We label
$\co(A)=1234$.
With any pair
$i,i+1$
for consecutive points in
$\co(A)$,
we associate the timelike line
$h_{i,i+1}$
dual to the event
$(i,i+1)$.
Two other points of
$\co(A)$
determine the events
$a=(i+2)_{(i,i+1)}$, $a'=(i+3)_{(i,i+1)}\in h_{i,i+1}$,
and we associate with 
$(i,i+1)$
the time
$t_{i(i+1)}>0$
between
$a$, $a'$.
In that way, every pair 
$(i,i+1)$
of consecutive points in
$\co(A)$
is labeled by a positive time 
$t_{i(i+1)}$. 

Adjacent pairs are labeled in general by distinct numbers
$t_{i(i+1)}$, $t_{(i+1)(i+2)}$, however, by Axiom~(t5), the opposite pairs
are labeled by one and the same number, that is,
$t_{i(i+1)}=t_{(i+2)(i+3)}$,
where indexes are taken modulo 4.

Assume that a 4-tuple
$q=(x,y,z,u)\in\reg\cP_4$
is obtained from
$\co(A)$
by fixing the initial point and by the transposition of two last
entries of
$\co(A)$,
that is,
$x$
is the chosen initial point and
$\co(A)=xyuz$.
Then we put
\begin{equation}\label{eq:def_moeb_timed}
M(q)=(-t_{xy},t_{yu},t_{xy}-t_{yu})\in L_4. 
\end{equation}

We denote by
$B\sub A$
the subset consisting of all 4-tuples obtained from
$\co(A)$
by fixing an initial entry and transposing
two last entries of
$\co(A)$.
The set 
$B$
consists of 4 elements,
$|B|=4$,
and it is an orbit of a cyclic subgroup
$\Ga_\pi\sub S_4$,
generated by the permutation
$\pi=2413\in S_4$, 
that is,
$\Ga_\pi=\{\id,\pi,\pi^2,\pi^3\}$
and
$B=\set{\si q}{$\si\in\Gamma_\pi$}$
for any 
$q\in B$.
For example, if
$\co(A)=xyuz$,
then
$q=(x,y,z,u)$, $\pi q=(y,u,x,z)$,
$\pi^2q=(u,z,y,x)$, $\pi^3q=(z,x,u,y)\in B$.

\begin{lem}\label{lem:monb} For any 
$q$, $q'=\si q\in B$
with
$\si\in\Ga_\pi$
we have
$$M(q')=\sign(\si)\phi(\si)M(q).$$
\end{lem}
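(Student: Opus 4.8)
The plan is to prove the identity by a direct computation on the whole orbit $B=\{q,\pi q,\pi^2 q,\pi^3 q\}$: I would evaluate $M$ on each of the four tuples straight from the definition (\ref{eq:def_moeb_timed}), compute the operators $\sign(\si)\phi(\si)$ on $L_4$ for $\si\in\Ga_\pi$, and compare. Since, by its very construction, every element of $B$ is obtained from $\co(A)$ by fixing an initial entry and transposing the two last entries, formula (\ref{eq:def_moeb_timed}) applies verbatim to each of them, which is exactly what makes the computation tractable.

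First I would exploit Axiom~(t5). Writing $\co(A)=xyuz$, the four consecutive pairs fall into two opposite classes, and (t5) gives $t_{xy}=t_{uz}=:p$ and $t_{yu}=t_{zx}=:r$. Applying (\ref{eq:def_moeb_timed}) to $q=(x,y,z,u)$, $\pi q=(y,u,x,z)$, $\pi^2 q=(u,z,y,x)$, $\pi^3 q=(z,x,u,y)$ and reading off the two relevant consecutive pairs in each case yields $M(q)=M(\pi^2 q)=(-p,r,p-r)$ and $M(\pi q)=M(\pi^3 q)=(-r,p,r-p)$. Thus $M$ takes only two values on $B$, and these two values are interchanged by the map $(a,b,c)\mapsto(-b,-a,-c)$ of $L_4$.

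Next I would pin down the right-hand side of (\ref{eq:signed_cross-ratio_homomorphism}) on the powers of $\pi$. The permutation $\pi=2413$ is the $4$-cycle $(1\,2\,4\,3)$, hence odd, so $\sign(\pi)=\sign(\pi^3)=-1$ and $\sign(\pi^2)=1$; and $\phi(\pi)$ is the transposition of $S_3$ interchanging the opposite-edge pairs $(12)(34)$ and $(13)(24)$ while fixing $(14)(23)$, so that $\phi(\pi^2)=\id$ and $\phi(\pi^3)=\phi(\pi)$. Reading off the resulting transformation of the three cross-ratios $\crr_1,\crr_2,\crr_3$ shows that the operator $\sign(\si)\phi(\si)$ acts on $L_4$ as the identity for $\si=\id,\pi^2$ and as $(a,b,c)\mapsto(-b,-a,-c)$ for $\si=\pi,\pi^3$. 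Comparing with the two values of $M$ found above then settles all four cases at once: for $\si=\id,\pi^2$ both sides equal $M(q)=M(\pi^2 q)$, while for $\si=\pi,\pi^3$ the operator sends $M(q)=(-p,r,p-r)$ to $(-r,p,r-p)=M(\pi q)=M(\pi^3 q)$.

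I expect the only delicate point to be the bookkeeping that matches the abstract cross-ratio homomorphism and the sign convention of (\ref{eq:signed_cross-ratio_homomorphism}) to the concrete coordinate action on $L_4$ — that is, establishing once and for all that $\sign(\pi)\phi(\pi)$ is the map $(a,b,c)\mapsto(-b,-a,-c)$ — together with the careful re-indexing of (\ref{eq:def_moeb_timed}) for each of the four tuples of $B$ and the correct invocation of (t5) to identify the times of opposite pairs. Once these are in place, the verification is the short explicit comparison above, and no further input is needed.
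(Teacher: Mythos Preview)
Your proposal is correct and uses the same ingredients as the paper: the definition~(\ref{eq:def_moeb_timed}), Axiom~(t5) to identify the times of opposite pairs, and the explicit identification of $\sign(\pi)\phi(\pi)$ with the map $(a,b,c)\mapsto(-b,-a,-c)$ on $L_4$.

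The only difference is organizational. The paper observes that since $\Ga_\pi$ is cyclic and the desired identity is multiplicative in $\si$ (if it holds for $\si_1$ and $\si_2$ it holds for $\si_1\si_2$), it suffices to verify it for the generator $\pi=2413$ alone; the paper then carries out exactly the single comparison $M(\pi q)=\sign(\pi)\phi(\pi)M(q)$ that you do, invoking $t_{12}=t_{34}$ from (t5). You instead compute $M$ on all four elements of $B$ and all four operators $\sign(\pi^k)\phi(\pi^k)$ separately. Your route is slightly longer but entirely explicit, and in particular makes visible the pleasant fact that $M$ is two-valued on $B$ and that $\pi^2\in\ker\phi$ with $\sign(\pi^2)=1$. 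Either way the bookkeeping you flag as ``delicate'' is the whole content of the argument, and you have it right.
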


\begin{proof} It suffices to prove the equality for the generator
$\si=2413$
of
$\Ga_\pi$.
Assume without loss of generality that 
$q=(x,y,z,u)$
and, hence,
$\co(A)=xyuz$.
We also write
$\co(A)=1234$.
Then
$q'=\si q=(y,u,x,z)$.
By definition,
$M(q)=(-t_{12},t_{23},t_{12}-t_{23})$, $M(q')=(-t_{23},t_{34},t_{23}-t_{34})$.
On the other hand,
$\sign(\si)=-1$
because
$\si$
is odd, and
$\phi(\si)=213$.
Therefore,
$$\sign(\si)\phi(\si)M(q)=-(t_{23},-t_{12},t_{12}-t_{23})=(-t_{23},t_{12},t_{23}-t_{34})
=M(q'),$$
because
$t_{12}=t_{34}$
by Axiom~(t5).
\end{proof}

Furthermore, for every
$p\in A$
there are
$\si\in S_4$
and
$q\in B$
such that 
$p=\si q$.
We put
\begin{equation}\label{eq:equivar}
M(p)=\sign(\si)\phi(\si)M(q). 
\end{equation}

\begin{pro}\label{pro:submoebius_welldefined} The equation (\ref{eq:equivar})
defines unambiguously a map
$M:\reg\cP_4\to L_4$
which is a sub-M\"obius structure on
$X$.
\end{pro}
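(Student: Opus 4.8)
The plan is to read this as the routine problem of extending a partially defined equivariant map from a sub-orbit $B$ to the full orbit $A$, the only genuine input being Lemma~\ref{lem:monb}; everything else is bookkeeping with the homomorphisms $\sign$ and $\phi$.

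First I would fix the two structural facts that drive the argument. Because the entries of any $q\in\reg\cP_4$ are pairwise distinct, the assignment $\pi\mapsto\pi q$ is a bijection $S_4\to A=A(q)$, so the $S_4$-action on $A$ is free and transitive and $\Ga_\pi$ acts freely and transitively on $B$. Writing $\rho(\si)=\sign(\si)\phi(\si)$ for the operator on $L_4$ appearing in (\ref{eq:equivar}), the assignment $\rho\colon S_4\to\mathrm{GL}(L_4)$ is a group homomorphism: since $\sign$ and $\phi$ are homomorphisms, the scalars $\sign(\si)$ are central, and $S_3$ acts on $L_4$ by signed permutations of coordinates, one has $\rho(\si_1)\rho(\si_2)=\rho(\si_1\si_2)$. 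I would also remark that $\rho$ preserves the sum-zero plane $L_4$ and that $M(q)\in L_4$ for every $q\in B$ by (\ref{eq:def_moeb_timed}), so the values produced by (\ref{eq:equivar}) genuinely lie in $L_4$.

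For unambiguity, suppose $p=\si_1q_1=\si_2q_2$ with $\si_1,\si_2\in S_4$ and $q_1,q_2\in B$. As $B$ is a single $\Ga_\pi$-orbit we have $q_2=\tau q_1$ for some $\tau\in\Ga_\pi$, and freeness of the $S_4$-action forces $\si_1=\si_2\tau$. Lemma~\ref{lem:monb} gives $M(q_2)=\rho(\tau)M(q_1)$, whence
$$\sign(\si_2)\phi(\si_2)M(q_2)=\rho(\si_2)\rho(\tau)M(q_1)=\rho(\si_2\tau)M(q_1)=\sign(\si_1)\phi(\si_1)M(q_1).$$
Thus the right-hand side of (\ref{eq:equivar}) is independent of the chosen representation $p=\si q$, and $M\colon\reg\cP_4\to L_4$ is well defined. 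This is the one place where the time axioms enter, namely through Axiom~(t5) inside Lemma~\ref{lem:monb}; it is precisely the ingredient resolving the internal ambiguity among the four representatives in $B$, and I expect it to be the only substantive content of the proof.

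Finally, for the sub-M\"obius property (\ref{eq:signed_cross-ratio_homomorphism}), fix $p\in\reg\cP_4$ and $\pi\in S_4$ and write $p=\si q$ with $q\in B$. Since $A(\pi p)=A(p)$, the same $B$ serves for $\pi p$, and $\pi p=(\pi\si)q$ with $q\in B$; so (\ref{eq:equivar}) together with the homomorphism property of $\rho$ gives
$$M(\pi p)=\rho(\pi\si)M(q)=\rho(\pi)\rho(\si)M(q)=\rho(\pi)M(p)=\sign(\pi)\phi(\pi)M(p),$$
which is exactly (\ref{eq:signed_cross-ratio_homomorphism}). Hence $M$ is a sub-M\"obius structure on $X$. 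I anticipate no obstacle beyond invoking Lemma~\ref{lem:monb}: once the freeness of the $S_4$-action and the homomorphism $\rho=\sign\cdot\phi$ are in hand, the Proposition follows formally.
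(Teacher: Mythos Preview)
Your proof is correct and follows essentially the same route as the paper: both use freeness of the $S_4$-action on $A$ to relate two representations $p=\si_1q_1=\si_2q_2$ via an element of $\Ga_\pi$, invoke Lemma~\ref{lem:monb}, and then use that $\sign\cdot\phi$ is a homomorphism to conclude well-definedness and equivariance. You spell out the equivariance step and the homomorphism property of $\rho=\sign\cdot\phi$ more explicitly than the paper does, but the argument is the same.
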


\begin{proof} We show that for a different representation
$p=\si'q'$
with
$\si'\in S_4$, $q'\in B$,
Eq.~(\ref{eq:equivar}) gives the same value
$M(p)$.
We have
$\si'q'=\si q$,
thus
$q'=\rho q$
with 
$\si'\rho=\si$.
Since
$q$, $q'\in B$
and the group
$S_4$
acts on
$A$
effectively, we have
$\rho\in\Gamma_\pi$.
Then by Lemma~\ref{lem:monb}
$$M(q')=\sign(\rho)\phi(\rho)M(q),$$
and we obtain 
$$\sign(\si')\phi(\si')M(q')=\sign(\si'\rho)\phi(\si'\rho)M(q)=M(p).$$
Thus Eq.~(\ref{eq:equivar}) defines unambiguously a map
$M:\reg\cP_4\to L_4$,
which now satisfies (\ref{eq:equivar}) for any 
$p=\si q$
with
$q\in\reg\cP_4$, $\si\in S_4$.
Hence,
$M$
is a sub-M\"obius structure on
$X$.
\end{proof}

Note that to define the sub-M\"obius structure
$M$
we do not use Axiom~(t6).

\subsection{The sub-M\"obius structure $M$ is a M\"obius one}
\label{subsect:sub-moeb_moeb}

Given a nondegenerate 5-tuple
$q\in\reg\cP_5$,
we label its cyclic order by
$\co(q)=12345$.
Assuming that the order of
$q=xyzuv$
is cyclic, we have
$\co(q_i)=\co(q)_i$
for 
$i=1,\dots,5$.
We consider with every
$i\in\co(q)$
three variables
$t_{(i+1)(i+2)}^i$, $t_{(i+2)(i+3)}^i$, $t_{(i+3)(i+4)}^i$,
associated with the 4-tuple
$\co(q_i)=\co(q)_i$
as in sect.~\ref{subsect:timed_sub-moeb}, where indexes are 
taken modulo 5. These 15 variables satisfy 10 equations
\begin{equation}\label{eq:time_(t4)}
t_{(i+1)(i+2)}^i=t_{(i+3)(i+4)}^i 
\end{equation}
\begin{equation}\label{eq:time_(t6)}
t_{(i+2)(i+3)}^i=t_{(i+2)(i+3)}^{i+1}+t_{(i+2))i+3)}^{i+4},
\end{equation}
which follow from Axioms~(t5) and (t4a) respectively. We compute
$\de M(q)=v$
for 
$q\in\reg\cP_5$
with
$\co(q)=12345$
as follows. The time-labeling of
$\co(q_i)=\co(q)_i$
is given by
$t_{(i+1)(i+2)}^i$, $t_{(i+2)(i+3)}^i$, $t_{(i+3)(i+4)}^i$, $t_{(i+4)(i+6)}^i$.
Thus according to our definition of the sub-M\"obius structure 
$M$
we have
$$(-t_{(i+1)(i+2)}^i,t_{(i+2)(i+3)}^i,t_{(i+1)(i+2)}^i-t_{(i+2)(i+3)}^i)=M(\pi \si^{i-1}q_i),$$
where 
$\pi=1243$, $\si=4123$.
Therefore,
$$M(\si^{i-1}q_i)=\sign(\pi)\phi(\pi)M(\pi\si^{i-1}q_i)
        =(t_{(i+1)(i+2)}^i,t_{(i+2)(i+3)}^i-t_{(i+1)(i+2)}^i,-t_{(i+2)(i+3)}^i),$$
and we obtain
$$\de M(q)=\begin{pmatrix}
     a_1&b_1&c_1\\
     a_2&b_2&c_2\\
     a_3&b_3&c_3\\
     a_4&b_4&c_4\\
     a_5&b_5&c_5
    \end{pmatrix}
    =\begin{pmatrix}
    t_{23}^1 &t_{34}^1-t_{23}^1 &-t_{34}^1\\
    t_{45}^2 &t_{34}^2-t_{45}^2 &-t_{34}^2\\
    t_{12}^3 &t_{15}^3-t_{12}^3 &-t_{15}^3\\
    t_{12}^4 &t_{23}^4-t_{12}^4 &-t_{23}^4\\
    t_{12}^5 &t_{23}^5-t_{12}^5 &-t_{23}^5    
    \end{pmatrix}.$$

\begin{thm}\label{thm:submoeb_is_moeb} The sub-M\"obius structure
$M$
associated with any timed causal space
$(\ay,\cH,t)$
is M\"obius.
\end{thm}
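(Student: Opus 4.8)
The plan is to apply the criterion of Theorem~\ref{thm:submoeb_moeb}: the sub-M\"obius structure $M$ is M\"obius if and only if conditions (A) and (B) hold on every nondegenerate $5$-tuple. By Remark~\ref{rem:equivalent_conditions} the two conditions are equivalent, and by the $S_5$-equivariance of the codifferential $\de M$ it suffices to verify one of them, say (B), for $5$-tuples presented in a single chosen order. Since $S^1$ carries a fixed orientation, I may take $q\in\reg\cP_5(S^1)$ in cyclic order $\co(q)=12345$, which is exactly the case for which $\de M(q)$ was computed above.

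Reading $(a(q_i),b(q_i))=(a_i,b_i)$ off the displayed matrix, condition (B), namely $b(q_2)=-a(q_4)+b(q_1)$, becomes the scalar identity
$$t_{34}^2-t_{45}^2=-t_{12}^4+t_{34}^1-t_{23}^1.$$
The whole task is then to derive this identity from the relations (\ref{eq:time_(t4)}) and (\ref{eq:time_(t6)}), which encode Axioms~(t5) and (t4a).

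Concretely, I would expand the right-hand side using (\ref{eq:time_(t6)}) at $i=4$ and $i=1$, i.e.\ $t_{12}^4=t_{12}^5+t_{12}^3$ and $t_{34}^1=t_{34}^2+t_{34}^5$; cancel $t_{34}^5$ against $t_{12}^5$ by (\ref{eq:time_(t4)}) at $i=5$ (so $t_{12}^5=t_{34}^5$); and rewrite $t_{12}^3=t_{45}^3$ and $t_{23}^1=t_{45}^1$ by (\ref{eq:time_(t4)}) at $i=3$ and $i=1$. At this stage the right-hand side reads $t_{34}^2-(t_{45}^3+t_{45}^1)$, and a final application of (\ref{eq:time_(t6)}) at $i=2$ (namely $t_{45}^2=t_{45}^3+t_{45}^1$) collapses it to $t_{34}^2-t_{45}^2$, which is the left-hand side. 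Hence (B) holds, and with it (A), so $M$ is M\"obius.

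The computation is entirely linear and carries no conceptual difficulty; the only point demanding care is the bookkeeping of indices modulo $5$, that is, instantiating the two families of relations at exactly the values of $i$ that produce the six time-labels occurring in (B). The real content of the statement is structural: Axioms~(t5) and (t4a) of a timed causal space are precisely the two linear cocycle relations that upgrade an $S_5$-equivariant sub-M\"obius structure to a genuine M\"obius one.
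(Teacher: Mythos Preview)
Your proof is correct and follows essentially the same approach as the paper: both reduce to verifying the criterion of Theorem~\ref{thm:submoeb_moeb} on a cyclically ordered $5$-tuple and then chase the identities using relations (\ref{eq:time_(t4)}) and (\ref{eq:time_(t6)}). The only difference is that the paper verifies (A) and (B) separately, whereas you invoke Remark~\ref{rem:equivalent_conditions} to check only (B); your index bookkeeping is accurate.
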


\begin{proof} We show that 
$M$
satisfies equations (A) and (B) of Theorem~\ref{thm:submoeb_moeb}.
It suffices to check that for every unordered 5-tuple
$x,y,z,u,v\sub X$
of pairwise distinct points, equations (A) and (B) are satisfied
for some ordering 
$q\in\reg\cP_5$
of the 5-tuple, because in this case
$\de M(q)$
lies in an irreducible invariant subspace
$R$
of respective representation of
$S_5$,
describing M\"obius structures, see \cite{Bu2}. Hence,
$\de M(q)\in R$
for any ordering of the 5-tuple. Or, applying the procedure above,
to check equations (A) and (B) directly.
Thus we assume that
$q=(x,y,z,u,v)\in\reg\cP_5$
has the cyclic order
$xyzuv$.

Equation~(A) can be rewritten as
$$0=b_1+b_4-b_3+a_1=-c_1-b_3+b_4=:A(v)$$
because
$a_1+b_1+c_1=0$,
and we compute using (\ref{eq:time_(t4)}), (\ref{eq:time_(t6)})

\begin{align*}
A(v)=-c_1-b_3+b_4&=t_{34}^1+t_{12}^3-t_{15}^3+t_{23}^4-t_{12}^4\\ 
                      &=t_{34}^1-t_{15}^3+t_{23}^4-t_{12}^5\\
                      &=t_{34}^5+t_{34}^2-t_{15}^3+t_{23}^4-t_{12}^5\\
                      &=t_{34}^2-t_{15}^3+t_{23}^4\\
                      &=t_{34}^2+t_{23}^4-t_{15}^2-t_{15}^4\\
                      &=t_{34}^2-t_{15}^2=0 
\end{align*}

Similarly, Equation~(B) can be rewritten as
$0=b_1-b_2-a_4=:B(v)$
and we compute using (\ref{eq:time_(t4)}), (\ref{eq:time_(t6)})

\begin{align*}
B(v)=b_1-b_2-a_4&=t_{34}^1-t_{23}^1-t_{34}^2+t_{45}^2-t_{12}^4\\ 
                     &=t_{34}^5-t_{23}^1+t_{45}^2-t_{12}^4\\
                     &=t_{34}^5-t_{23}^1+t_{45}^1+t_{45}^3-t_{12}^4\\
                     &=t_{34}^5+t_{45}^3-t_{12}^4\\
                     &=t_{34}^5+t_{45}^3-t_{12}^3-t_{12}^5=0.
\end{align*}
Therefore, by Theorem~\ref{thm:submoeb_moeb},
$M$
is a M\"obius structure.
\end{proof}

\begin{pro}\label{pro:timed_monotone} The M\"obius structure
$M=\wh M(T)$
associated with a timed causal space
$T\in\cT$
is monotone, 
$M\in\cM$,
and the timed causal space
$\wh T(M)$
associated with 
$M$
coincide with
$T$, $\wh T(M)=T$.
\end{pro}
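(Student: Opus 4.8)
The plan is to prove the two assertions in turn: first that $M=\wh M(T)$ lies in $\cM$, and then that the return map $\wh T$ recovers $T$ on the nose, by matching timelike lines and then times.

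\emph{Monotonicity.} Axiom~(M) is essentially free once positivity of the time is in hand. By Remark~\ref{rem:equivalent_monotone} it suffices to exhibit, for each four-point set, an ordering $q=(x,y,z,u)$ with $(x,u)$ and $(y,z)$ separating for which $a(q)<0$ and $b(q)>0$. Taking such a $q$ in the orbit $B$, so that $\co(A)=xyuz$, formula~(\ref{eq:def_moeb_timed}) gives $M(q)=(-t_{xy},t_{yu},t_{xy}-t_{yu})$ with both labels $t_{xy},t_{yu}>0$, whence $a(q)<0$ and $b(q)>0$. Since $M$ is a genuine M\"obius structure by Theorem~\ref{thm:submoeb_is_moeb}, these are logarithms of cross-ratios of a representing semi-metric, so they express exactly the inequalities $|xu||yz|>|xz||yu|$ and $|xu||yz|>|xy||zu|$, i.e.\ Axiom~(M). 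Axiom~(T) is the topological point: I would check that each representing semi-metric $|\cdot\,\cdot|_\om$ is continuous in the $S^1$-topology, so that $M$-balls are standard open sets and conversely generate it; this rests on continuity of $t$ along the timelike and light lines of $T$.

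\emph{Recovering the timelike lines.} Write $\cH_M,t_M$ for the data of $\wh T(M)$. The crux is the lemma that for $e=(z,u)$ and $x\notin e$, the event $a=(x,y)$ is $M$-harmonic to $e$ if and only if $a\in h_e$ in $T$. Assuming the separating cyclic order $xzyu$, $M$-harmonicity is the vanishing of the first coordinate of $M(x,y,z,u)$, and unwinding the definition through~(\ref{eq:equivar}) — passing from the representative $(x,z,u,y)\in B$ by the appropriate $\pi\in S_4$ and using the signed action~(\ref{eq:signed_cross-ratio_homomorphism}) — rewrites this coordinate as $t_{xz}-t_{zy}$; so $(a,e)$ is $M$-harmonic precisely when $t_{xz}=t_{zy}$. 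The forward implication is where Axiom~(t6) is indispensable: since $a\in h_e$ gives $e\in h_a$ by (h5), Axiom~(t6) applied to the pair $a,e$ yields $t(y_{(x,z)},u_{(x,z)})=t(y_{(x,u)},z_{(x,u)})$, that is $t_{xz}=t_{xu}$, while Axiom~(t5) supplies $t_{xu}=t_{zy}$; together $t_{xz}=t_{zy}$, so $a$ is $M$-harmonic to $e$ and $h_e\sub h_e^M$. For the reverse inclusion I would not recompute: $M$ being monotone, Lemma~\ref{lem:harm_exist} gives for each $x\notin e$ a \emph{unique} $M$-harmonic partner, whereas (h4) gives the unique $x_e\in h_e$; the forward implication identifies the two, so $h_e^M=h_e$ and $\cH_M=\cH$.

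\emph{Recovering the time.} With the lines matched, two events $a,a'$ in the strong causal relation share, by Proposition~\ref{pro:timelike_lines}(b), the same common perpendicular $e=(x,y)$ in both structures, and $t_M(a,a')$ is computed by~(\ref{eq:time}). Writing $a=(z,u)$, $a'=(z',u')$ with $z,z'$ on one arc of $e$, the cross-ratio in~(\ref{eq:time}) is the exponential of minus the first coordinate of $M(x,y,z,z')$, and the same unwinding as above evaluates it as the single label $t_{zz'}$, so $t_M(a,a')=t_{zz'}$. Finally Axiom~(t5) applied to $e=(x,y)$ and $(z,z')$, which are in the strong causal relation, gives $t_{zz'}=t(z_e,z'_e)=t(a,a')$, since $z_e=a$ and $z'_e=a'$. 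Hence $t_M=t$ and therefore $\wh T(M)=T$.

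\emph{Main obstacle.} The decisive and most delicate step is the harmonicity lemma, and within it the forward implication, which is exactly the use of the ``strange looking'' Axioms~(t5),(t6) flagged in Remark~\ref{rem:axioms_t5_t6}; the bookkeeping of cyclic orders and of the signed action~(\ref{eq:signed_cross-ratio_homomorphism}) while passing from representatives of $B$ to the orderings $(x,y,z,u)$ and $(x,y,z,z')$ is where the technical care concentrates. A separate, more analytic point requiring attention is Axiom~(T), whose verification is topological rather than algebraic.
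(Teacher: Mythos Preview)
Your overall architecture matches the paper's: verify Axiom~(M) from positivity of the time labels, prove the harmonicity lemma $a\in h_e\Leftrightarrow (a,e)$ is $M$-harmonic, and read off $t_M=t$ from the cross-ratio formula. Your argument for~(M) is the paper's Lemma~\ref{lem:pro:timed_monotone:axiom_m}, your forward implication in the harmonicity lemma via~(t6) then~(t5) is exactly Lemma~\ref{lem:pro:timed_monotone:harmonic}, and your time recovery is the second half of Lemma~\ref{lem:pro:timed_monotone:axiom_m}. For the reverse implication you invoke Lemma~\ref{lem:harm_exist}; note that only its \emph{uniqueness} clause is needed (existence comes from~(h4) plus the forward direction), and that clause uses only Axiom~(M), so there is no circularity.

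The genuine gap is Axiom~(T). You flag it as ``a separate, more analytic point'' and sketch it as continuity of the semi-metrics in the $S^1$-topology, but this is where the paper does real work (Lemma~\ref{lem:semi-metric_balls}) and, crucially, in the \emph{opposite logical order} from yours: the paper proves~(T) \emph{after} the harmonicity lemma, because its proof that open $d$-balls coincide with open arcs uses that lemma to identify the midpoint of an arc, and then invokes Axiom~(t4b) to show that \emph{every} radius $r>0$ is realized as $\de(y,o)$ for some $y$. Your sketch (``continuity of $t$ along timelike and light lines'') does not supply this surjectivity-of-radii step, and the axioms give no direct continuity of $t$. So the harmonicity lemma is not only the crux for $\cH_M=\cH$ but also an input to~(T); you should reorder accordingly and fill in the~(t4b) argument.
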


The proof proceeds in three steps, Lemmas~\ref{lem:pro:timed_monotone:axiom_m} --
\ref{lem:semi-metric_balls}.

\begin{lem}\label{lem:pro:timed_monotone:axiom_m} The M\"obius structure
$M=\wh M(T)$
satisfies Axiom~(M), and the time of the timed causal space
$T=(\ay,\cH,t)$
is computed in the usual way via
$M$-cross-ratios.
\end{lem}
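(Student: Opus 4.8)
The statement splits into two essentially independent claims: that the recovered structure $M=\wh M(T)$ obeys Axiom~(M), and that the original time $t$ is recovered from $M$ by the cross-ratio formula~(\ref{eq:time}). Throughout I may use that $M$ is a genuine M\"obius structure (Theorem~\ref{thm:submoeb_is_moeb}), so that its cross-ratios are well defined and computable from any semi-metric.

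For Axiom~(M) the plan is to invoke Remark~\ref{rem:equivalent_monotone}, which reduces the axiom to the two sign conditions $a(q)<0$ and $b(q)>0$ on $M(q)=(a(q),b(q),c(q))$ for $4$-tuples whose outer and inner pairs separate. These signs are built into the defining formula~(\ref{eq:def_moeb_timed}): for the canonical $B$-representative of any cyclic configuration one has $M(q)=(-t_{\bullet\bullet},\,t_{\bullet\bullet},\,t_{\bullet\bullet}-t_{\bullet\bullet})$, and both labels are strictly positive by the construction of Section~\ref{subsect:timed_sub-moeb} (they are times between two distinct events on a timelike line, hence nonzero by Axiom~(t2)). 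Concretely, for four points in cyclic order $1234$ I take $q=(1,2,4,3)\in B$, read off $a(q)=-t_{12}<0$ and $b(q)=t_{23}>0$, and translate these through the definitions of $\crr_1,\crr_2$ into $|13|\cdot|24|>\max\{|12|\cdot|34|,\,|14|\cdot|23|\}$, which is exactly Axiom~(M) for that configuration. Since every configuration has such a $B$-representative, Axiom~(M) follows with essentially no computation.

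For the second claim I fix events $a=(z,u)$, $a'=(z',u')$ in the strong causal relation, take their unique common perpendicular $e=(x,y)$, so that $a,a'\in h_e$ and $u=\rho_e(z)$, $u'=\rho_e(z')$ by Lemma~\ref{lem:reflection}. Using the symmetry~(t3) of $t$ and an orientation choice I reduce to the case where the four points occur in the cyclic order $x,z,z',y$. The fraction inside~(\ref{eq:time}) is then precisely the cross-ratio $\crr_2(q_0)$ of the natural ordering $q_0=(x,z,y,z')\in B$, so $\ln$ of it equals $b(q_0)$; and by the very definition~(\ref{eq:def_moeb_timed}) one has $b(q_0)=t_{zz'}$, the label of the edge $(z,z')$, namely the time between the events $x_{(z,z')}$ and $y_{(z,z')}$ on the dual line $h_{(z,z')}$. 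The decisive step is to apply Axiom~(t5) to the pair $e=(x,y)$, $d=(z,z')$ (which are in the strong causal relation, being distinct, sharing no endpoint, and non-separating): it identifies $t(x_d,y_d)$ with $t(z_e,z'_e)=t(a,a')$. Hence formula~(\ref{eq:time}) returns the genuine time $t(a,a')$; the light-line case is immediate from Axiom~(t2).

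I expect the main obstacle to be the bookkeeping in the second claim rather than any conceptual difficulty. One must correctly match the abstract reflections $\rho_e,\rho_d$ and the events $x_d,z_e,\dots$ of $T$ against the $B$-orbit combinatorics used to build $M$, and recognize that the relevant instance of Axiom~(t5) is exactly the one linking the edge-time that enters $M$ to the actual time along the dual timelike line. Once the chain of identifications $\ln(\crr_2(q_0))=b(q_0)=t_{zz'}=t(a,a')$ is in place, both halves of the lemma close immediately.
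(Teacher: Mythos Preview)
Your proposal is correct and follows essentially the same route as the paper: both derive Axiom~(M) from the sign pattern $(-t,+t',\ast)$ of $M$ on a $B$-representative via Remark~\ref{rem:equivalent_monotone}, and both identify the time $t(a,a')$ with the second coordinate $b(q)$ of $M(q)$ for an appropriate $4$-tuple containing the common perpendicular, using Axiom~(t5) (the paper hides this step in the equality $t_{xz}=t_{yu}$, while you invoke it explicitly). The only cosmetic difference is that the paper handles both claims simultaneously with a single $4$-tuple, whereas you separate them and choose slightly different labelings.
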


\begin{proof} We check Axiom~(M) and simultaneously compute the time
$t(e,e')$
between events
$e$, $e'\in\ay$
assuming without loss of generality that 
$e=(y,y')$, $e'=(u,u')\in h_a$
for 
$a=(x,z)$
such that 
the 4-tuple
$q=(x,y,z,u)\in\reg\cP_4$
is obtained from
$\co(q)=xyuz$
by fixing the initial point 
$x$
and by the transposition of two last entries of
$\co(q)$.
Note that the pairs
$(x,u)$
and
$(y,z)$
separate each other. Then by definition,
$M(q)=(a(q),b(q),c(q))=(-t_{xy},t_{yu},t_{xy}-t_{yu})$
with the negative first entry
$a(q)=-t_{xy}$
and the positive second entry
$b(q)=t_{yu}$.
By Theorem~\ref{thm:submoeb_is_moeb}, we have 
$M(q)=M_d(q)$
for any semi-metric
$d\in M$.
Thus
$$\crr_1(q)=e^{a(q)}=\frac{d(x,z)d(y,u)}{d(x,u)d(y,z)}<1,\quad
  \crr_2(q)=e^{b(q)}=\frac{d(x,u)d(y,z)}{d(x,y)d(z,u)}>1.$$
This shows that 
$M$
satisfies Axiom~(M), see Remark~\ref{rem:equivalent_monotone}, and that 
$t(e,e')=t_{xz}=t_{yu}=\ln\crr_2(q)$.
\end{proof}

\begin{lem}\label{lem:pro:timed_monotone:harmonic} Let 
$h=h_e$
be a timelike line in a timed causal space
$T$.
An event
$d\in h_e$
if and only if the 4-tuple
$(d,e)$
is 
$M$-harmonic, that is, harmonic with respect to the M\"obius 
structure 
$M=\wh M(T)$.
\end{lem}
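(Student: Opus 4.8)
The plan is to establish the two implications separately: Axiom~(t6) will do the work for the direct implication, while the converse will follow from uniqueness of the harmonic partner. Throughout I rely on the preceding Lemma~\ref{lem:pro:timed_monotone:axiom_m}, which guarantees that $M=\wh M(T)$ satisfies Axiom~(M) and, crucially, that the time between two events lying on a common timelike line is computed by the cross-ratio formula~(\ref{eq:time}); this is what makes~(\ref{eq:time}) available for a \emph{general} $T\in\cT$ rather than only for the structures coming from a M\"obius structure.

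First I would fix coordinates. Write $e=(x,y)$ and $d=(z,u)$. In both directions the configuration is the same: an event on $h_e$ separates $e$ by Axiom~(h2), while an $M$-harmonic partner of $e$ separates $e$ by Lemma~\ref{lem:harm_separate}. So I may assume the cyclic order of the four points is $xzyu$, with $(x,y)$ and $(z,u)$ the two separating pairs. With these labels, $M$-harmonicity of $(d,e)$ is exactly the identity $|xz|\,|yu|=|xu|\,|yz|$ from~(\ref{eq:harmonic_events}).

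For the direct implication I assume $d\in h_e$ and apply Axiom~(t6) with $a=(x,z)$ and $b=(x,u)$, which gives $t(y_a,u_a)=t(y_b,z_b)$. Evaluating both sides by~(\ref{eq:time}) — the ends of $h_a$ are $x,z$ with relevant first coordinates $y,u$, and the ends of $h_b$ are $x,u$ with first coordinates $y,z$ — and writing $P=\frac{|xu|\,|yz|}{|xy|\,|zu|}$ and $Q=\frac{|xz|\,|yu|}{|xy|\,|zu|}$, this equation reads $|\ln P|=|\ln Q|$. The main (and essentially the only) obstacle is to strip the absolute values correctly, since $|\ln P|=|\ln Q|$ a priori allows either $P=Q$ or $PQ=1$. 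This is precisely where monotonicity is decisive: as $(x,y)$ and $(z,u)$ separate each other, Axiom~(M) yields
\[
|xy|\,|zu|>\max\{|xu|\,|yz|,\,|xz|\,|yu|\},
\]
so $P<1$ and $Q<1$, whence $PQ<1$. This excludes $PQ=1$ and forces $P=Q$, i.e. $|xu|\,|yz|=|xz|\,|yu|$, which is the harmonicity of $(d,e)$.

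For the converse I would argue by uniqueness. Suppose $(d,e)$ is $M$-harmonic with $d=(z,u)$. By Axiom~(h4) the point $z$ determines a unique event $z_e=(z,w)\in h_e$, and the direct implication already proved shows $(z_e,e)$ is $M$-harmonic as well. Thus $u$ and $w$ are both $M$-harmonic partners of $z$ with respect to $e$; but by Lemma~\ref{lem:harm_separate} such a partner is the unique midpoint of the arc $xy$ in the inversion based at $z$, a uniqueness that rests only on monotonicity (Corollary~\ref{cor:interval_monotone}) and so is legitimate here. Hence $u=w$ and $d=z_e\in h_e$, completing the equivalence. The points I would still double-check are that the auxiliary events $(x,z)$, $(x,u)$ and $z_e$ are genuine (their entries are pairwise distinct) and that the invoked uniqueness uses nothing beyond Axiom~(M), which is the only monotonicity property of $M=\wh M(T)$ established so far.
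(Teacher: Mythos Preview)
Your proof is correct and follows essentially the same strategy as the paper: Axiom~(t6) for the forward direction, then uniqueness of the harmonic partner (via monotonicity) for the converse. The only cosmetic difference is in the forward direction: the paper reads off $M(\wt q)=(-t_{xu},t_{xz},t_{xu}-t_{xz})$ directly from the signed definition~(\ref{eq:def_moeb_timed}), so $t_{xz}=t_{xu}$ immediately gives $\crr_3(\wt q)=1$ without any absolute values to resolve, whereas you route through the absolute-value formula~(\ref{eq:time}) and then use Axiom~(M) to rule out $PQ=1$ --- a perfectly valid variant of the same computation.
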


\begin{proof} Let
$e=(x,y)$
and
$d=(z,u)$.

If
$d\in h_e$,
then by Axiom~(h2),
$d$
separates
$e$,
and by Axiom~(t6) we have 
$t(y_a,u_a)=t(y_b,u_b)$,
where
$a=(x,z)$, $b=(x,u)$.
Thus we can assume without loss of generality that
$\co(q)=xzyu$
for the nondegenerate 4-tuple
$q=(e,d)$.
Note that 
$t(y_a,u_a)=t_{xz}$
and
$t(y_b,z_b)=t_{xu}$.
Thus
$t_{xz}=t_{xu}$.
The 4-tuple
$\wt q=(u,x,y,z)$
is obtained from
$\co(q)=uxzy$
by fixing the first entry
$u$
and permuting two last entries
$z$, $y$.
Therefore, by definition,
$M(\wt q)=(-t_{xu},t_{xz},0)$.
Using that 
$M(\wt q)=M_d(\wt q)$
for any semi-metric
$d\in M$,
we obtain
$$1=\crr_3(\wt q)=\frac{d(x,u)\cdot d(y,z)}{d(y,u)\cdot d(x,z)}.$$
Hence
$(d,e)$
is
$M$-harmonic.

Conversely, if
$(d,e)$
is
$M$-harmonic,
then by Lemma~\ref{lem:harm_separate},
$d$
and
$e$
separate each other. By Axiom~(h3), there is a unique
$u'\in X\sm e$
such that 
$d'=(z,u')\in h_e$.
By the first part of the proof, the 4-tuple
$(d',e)$
is
$M$-harmonic.
Taking a semi-metric
$d\in M$
with the infinitely remote point
$z$,
we observe that
\begin{equation}\label{eq:midpoints}
d(x,u)=d(u,y)\quad\textrm{and}\quad d(x,u')=d(u',y), 
\end{equation}
because the 4-tuples
$(d,e)$, $(d',e)$
are 
$M$-harmonic.
Assume
$u\neq u'$.
Then the 4-tuple
$(x,y,u,u')$
is nondegenerate, and
$u$, $u'$
are on the arc determined by
$e$
that does not contain
$z$.
Without loss of generality, we assume that
$(x,u)$
separate
$(y,u')$.
By Lemma~\ref{lem:pro:timed_monotone:axiom_m},
$M$
satisfies Axiom~(M). Thus
$$d(x,u)\cdot(y,u')>d(x,u')\cdot d(y,u)$$
in contradiction with (\ref{eq:midpoints}). Hence
$u=u'$
and 
$d=d'\in h_e$.
\end{proof}

\begin{lem}\label{lem:semi-metric_balls} The set 
$\cA$
of open arcs in
$X$
coincides with the set 
$\cB$
of open balls with respect to semi-metrics
$d\in M=\wh M(T)$
with infinitely remote points centered at finite points of
$d$, $\cA=\cB$.
\end{lem}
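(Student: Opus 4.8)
The plan is to prove the two inclusions $\cB\sub\cA$ and $\cA\sub\cB$ separately. The two tools I would rely on are the monotonicity of $M=\wh M(T)$, available because $M$ satisfies Axiom~(M) by Lemma~\ref{lem:pro:timed_monotone:axiom_m} so that Corollary~\ref{cor:interval_monotone} applies, and the continuity of the distance functions $f_x(y)=|xy|_\om$ established in Lemma~\ref{lem:continuity_semimetrics}. Throughout, ``arc'' means a proper open arc of $X=S^1$ with two distinct endpoints.

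For $\cB\sub\cA$, fix a semi-metric $d_\om\in M$ with infinitely remote point $\om$, a finite center $x\in X_\om$ and a radius $s>0$. The point $x$ splits $X_\om$ into two arcs running towards $\om$, and Corollary~\ref{cor:interval_monotone} shows that $|x\cdot|_\om$ is strictly increasing along each of them as one moves away from $x$. Hence the sublevel set $B_s(x)=\{y\in X:|xy|_\om<s\}$ is an arc about $x$, and since $|x\om|_\om=\infty$ it never contains $\om$. This already proves $\cB\sub\cA$ once we know the balls do not fill all of $X_\om$; moreover every element of the subbase defining the $M$-topology is then open in the circle topology, so the $M$-topology is coarser than the topology of $S^1$. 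Consequently the functions $f_x$, which are continuous in the $M$-topology by Lemma~\ref{lem:continuity_semimetrics}, are automatically continuous in the topology of $S^1$.

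For $\cA\sub\cB$, let $I$ be an open arc with endpoints $p\neq q$, and choose $\om$ in the interior of the complementary arc, so that $I$ is the component of $X_\om\sm\{p,q\}$ not meeting $\om$ and $p$, $q$ lie on opposite sides of each point of $I$. The function $g(x)=|xp|_\om-|xq|_\om$ is continuous on $I$ by the previous paragraph, with $g(x)\to-|pq|_\om<0$ as $x\to p$ and $g(x)\to|pq|_\om>0$ as $x\to q$; by the intermediate value theorem there is $x_0\in I$ with $|x_0p|_\om=|x_0q|_\om=:s$. By the strict monotonicity of $|x_0\cdot|_\om$ along the two arcs from $x_0$ towards $\om$, the inequality $|x_0\cdot|_\om<s$ holds precisely on the open sub-arc of $I$ bounded by $p$ and $q$, that is $B_s(x_0)=I$. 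Hence $I\in\cB$.

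The delicate point --- and the main obstacle --- is the interplay of the two topologies: Lemma~\ref{lem:continuity_semimetrics} only yields continuity in the $M$-topology, so the argument must be arranged to prove $\cB\sub\cA$ (hence ``$M$-topology $\sub$ circle topology'') first and use it to upgrade that continuity to the circle topology before invoking the intermediate value theorem. The same upgraded continuity also closes the remaining gap in the first inclusion, namely that a ball is a genuine two-endpoint arc rather than all of $X_\om$: writing $d_\om$ as the metric inversion of the semi-metric $d_w$ at a third point $w$, $|xy|_\om=|xy|_w/(|x\om|_w\,|y\om|_w)$ up to a positive factor, and using $|y\om|_w\to 0$ as $y\to\om$, one gets $|xy|_\om\to\infty$, so the level value $s$ is attained on each side before $\om$ is reached. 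Combining the two inclusions gives $\cA=\cB$.
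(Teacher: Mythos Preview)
Your argument has a genuine gap at the very first step. From strict monotonicity of $|x\cdot|_\om$ along each half of $X_\om$ (which Corollary~\ref{cor:interval_monotone} does give, using only Axiom~(M)), you conclude that the sublevel set $B_s(x)$ is ``an arc about $x$'', meaning an \emph{open} arc. But strict monotonicity alone does not force sublevel sets to be open: a strictly increasing real function can jump, and then $\{f<s\}$ may be a half-open interval. So you cannot yet conclude that every $B_s(x)$ is $S^1$-open, hence you cannot deduce that the $M$-topology is coarser than the circle topology, hence you cannot upgrade the continuity of Lemma~\ref{lem:continuity_semimetrics} to the circle topology. Since your IVT argument for $\cA\sub\cB$ and your ``gap closing'' for $\cB\sub\cA$ both rely on that upgraded continuity, the whole bootstrap collapses.

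The paper's proof sidesteps this circularity by not appealing to continuity of the semi-metric at all. It exploits the timed causal structure of $T$ directly. For $\cA\sub\cB$: given an arc with endpoints $x,y$, pick any $z$ in the arc and let $u=\rho_e(z)$ for $e=(x,y)$ (Axiom~(h4)); then by Lemma~\ref{lem:pro:timed_monotone:harmonic} the pair $((z,u),e)$ is $M$-harmonic, so $z$ is the $d_u$-midpoint of $x,y$, and Axiom~(M) alone identifies the arc with $B_r(z)$ for $r=d_u(x,z)$. For $\cB\sub\cA$: given $o,\om,r$, the paper uses the timelike line $h_d$ with $d=(o,\om)$, Lemma~\ref{lem:pro:timed_monotone:harmonic} again, and crucially Axiom~(t4b) to show that every positive radius is actually realized as $\de(y,o)$ for some $y$, whence $B_r(o)$ equals the open arc $\intr e^-$ bounded by the event $y_d$. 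The completeness built into (t4b) is exactly what replaces the continuity you were trying to manufacture.
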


\begin{proof} Let
$\al\in\cA$
be an open arc in
$X$
and let
$x$, $y\in X$
be the end points of
$\al$.
We put
$e=(x,y)\in\ay$
and take 
$z\in\al$.
Then for 
$u=\rho_e(z)$
the event
$d=(z,u)$
lies on the timelike line
$h_e$.
By Lemma~\ref{lem:pro:timed_monotone:harmonic}, the 4-tuple
$(d,e)$
is
$M$-harmonic.
Thus
$z$
is the midpoint between
$x$, $y$
with respect to any semi-metric
$d\in M$
with infinitely remote point
$u$.
By Axiom~(M),
$v\in\al$
if and only if
$d(z,v)<r:=d(x,z)=d(y,z)$.
Therefore
$\al$
coincides with the open ball
$B_r(z)$
with respect to
$d$
of radius
$r$
centered at
$z$.
It means that 
$\cA\sub\cB$.

Let
$\be=B_r(o)\in\cB$
be the open ball with respect to a semi-metric
$\de\in M$
with the infinitely remote point
$\om$
of radius
$r>0$
centered at
$o\in X_\om$.
We show that
$\be\in\cA$.

Let
$d=(o,\om)\in\ay$.
By (h4), for any 
$y\in X_\om$, $y\neq o$,
there is a unique event
$e=y_d=(y,y')\in h_d$.
We fix such an
$y$,
denote by
$e^+\sub X$
the closed arc determined by
$e$
that contains
$\om$,
and consider the respective linear order
$<=<_e$
on
$h_d$
with the future arc 
$e^+$.

First,
we show that for 
$r=\de(y,o)$
the open ball
$B_r(o)$
coincides with the open arc 
$\intr e^-\in\cA$
determined by
$e$
that contains
$o$.
We denote by
$d^+\sub X$
the closed arc determined by
$d$
that contains
$y$,
by
$d^-$
the opposite closed arc. Then
$\intr e^-=(d^+\cap\intr e^-)\cup(d^-\cap\intr e^-)$.

We have
$u\in d^+\cap\intr e^-$
if and only if pairs
$(y,o)$, $(u,\om)$
separate each other. By Axiom~(M) this is equivalent to
$\de(u,o)<\de(y,o)=r$.
On the other hand,
$u\in d^-\cap\intr e^-$
if and only if
$u'=\rho_d(u)\in d^+\cap\intr e^-$.
By above, this is equivalent to
$\de(u',o)<r$.
By Lemma~\ref{lem:pro:timed_monotone:harmonic}, the 4-tuple
$(d,u_d)$
is harmonic, where
$u_d=(u,u')$.
Thus
$\de(u,o)=\de(u',o)<r$.
Therefore,
$\intr e^-=B_r(o)$
for 
$r=\de(y,o)$.

It remains to show that for any 
$r>0$
there is
$y\in X_\om$
with 
$\de(y,o)=r$.
We fix some
$y\in X_\om$, $y\neq o$,
and use the notations introduced above.
By (t4b), for any 
$s>0$
there is
$e_\pm=(u_\pm,u_\pm')\in h_d\cap C_e^\pm$
with 
$t(e,e_\pm)=s$.
By Lemma~\ref{lem:pro:timed_monotone:harmonic}, the 4-tuples
$(d,e)$, $(d,e_\pm)$
are 
$M$-harmonic. Hence
$\de(o,y)=\de(o,y')$, $\de(o,u_\pm)=\de(o,u_\pm')$.
As above, Axiom~(M) implies
$$\de(u_-,o)<\de(y,o)<\de(u_+,o).$$
By Lemma~\ref{lem:pro:timed_monotone:axiom_m}, the time 
$t(e,e_\pm)$
is computed via 
$M$-cross-ratios,
$$t(e,e_\pm)=\left|\ln\frac{\de(\om,y)\de(u_\pm,o)}{\de(\om,u_\pm)\de(y,o)}\right|
            =\left|\ln\frac{\de(u_\pm,o)}{\de(y,o)}\right|,$$
hence
$s=\pm\ln\frac{\de(u_\pm,o)}{\de(y,o)}$.
This shows that for any 
$\la>0$
there is
$u\in X_\om$, $u\neq o$,
with 
$\de(u,o)=\la \de(y,o)$.  
Hence, for any 
$r>0$
there is
$y\in X_\om$
with 
$\de(y,o)=r$.
\end{proof}

\begin{proof}[Proof of Proposition~\ref{pro:timed_monotone}]
By Lemma~\ref{lem:pro:timed_monotone:axiom_m}, the M\"obius structure
$M=\wh M(T)$
satisfies Axiom~(M) for any timed causal space
$T=(\ay,\cH,t)\in\cT$. 
It follows from Lemma~\ref{lem:semi-metric_balls} that 
$M$
satisfies Axiom~(T). Thus 
$M$
is monotone,
$M\in\cM$.

Let
$T'=(\ay,\cH',t')=\wh T(M)\in\cT$
be the timed causal space determined by
$M$.
By Lemma~\ref{lem:pro:timed_monotone:harmonic}, 
$\cH'=\cH$,
and by Lemma~\ref{lem:pro:timed_monotone:axiom_m},
$t'=t$.
Thus
$T'=T$. 
\end{proof}

\begin{proof}[Proof of Theorem~\ref{thm:main}]
Given 
$M\in\cM$,
we show that
$M'=M$,
where
$M'=\wh M\circ\wh T(M)$,
that is,
$M'(q)=M(q)$
for every
$q\in\reg\cP_4$.
Using Eq.~\ref{eq:signed_cross-ratio_homomorphism}, we can assume
without loss of generality that the cyclic order of
$q=(x,y,z,u)$
is
$\co(q)=xyuz$,
and thus
$q$
is obtained from
$\co(q)$
by picking up the first entry
$x$
and permuting the last two entries. In particular, 
$(x,u)$
and 
$(y,z)$
separate each other. Then by definition~(\ref{eq:def_moeb_timed})
we have 
$$M'(q)=(-t_{xy},t_{yu},t_{xy}-t_{yu}),$$
where 
$t_{xy}=t(z_a,u_a)$, $t_{yu}=t(x_b,z_b)$
for 
$a=(x,y)$, $b=(y,u)\in\ay$,
see Axiom~(h4),
for 
$T=\wh T(M)=(\ay,\cH,t)\in\cT$.
By definition~(\ref{eq:time}) of the time
$t$
we have
$t(z_a,u_a)=\left|\ln\frac{|xz|\cdot|yu|}{|xu|\cdot|yz|}\right|=-\ln\crr_1(q)$,
$t(x_b,z_b)=\left|\ln\frac{|xu|\cdot|yz|}{|xy|\cdot|uz|}\right|=\ln\crr_2(q)$
(to choose the signs, we have used that 
$(x,u)$, $(y,z)$
separate each other and monotonicity of
$M$).
Therefore,
$M'(q)=M(q)$.
Together with Proposition~\ref{pro:timed_monotone} this shows that
$\wh T:\cM\to\cT$
and 
$\wh M:\cT\to\cM$
are mutually inverse maps.

Let
$\wh g:\ay\to\ay$
be an automorphism of some 
$T=(\ay,\cH,t)\in\cT$.
Since
$t(e,e')=0$
if and only if the events
$e$, $e'\in\ay$
lie on a light line, and 
$\wh g$
preserves the time 
$t$,
we see that 
$\wh g$
maps every light line to a light line. Thus
$\wh g$
determines a map 
$g:X\to X$
with
$\wh g(p_x)=p_{g(x)}$,
see sect.~\ref{subsect:recoveting_desitter}. For any event
$e=(x,y)\in\ay$
we have
$e=p_x\cap p_y$.
Thus 
$\wh g(e)=\wh g(p_x)\cap\wh g(p_y)=p_{g(x)}\cap p_{g(y)}=(g(x),g(y))$.
Hence,
$\wh g$
is induced by
$g$.

Since 
$T=\wh T(M)$
for some 
$M\in\cM$,
the timelike lines and the time of
$T$
are determined by cross-ratios of
$M$,
see Proposition~\ref{pro:monotone_moeb_to_timed_space}. Therefore,
$g$
is an 
$M$-automorphism.
If
$g=\id_X$,
then
$\wh g=\id_{\ay}$.
Thus the group
$G_T$
of
$T$-automorphisms
injects into the group
$G_M$
of
$M$-automorphisms.
Together with Proposition~\ref{pro:monotone_moeb_to_timed_space}
this shows that the groups
$G_M$
and 
$G_T$
are canonically isomorphic.
\end{proof}

\section{Time inequalities} 
\label{sect:time_inequality}

The {\em time inequality} for de Sitter 2-space 
$\ds^2$
says that  
$$t(a,b)+t(b,c)\le t(a,c)$$
for any events
$a<b<c$
with the equality in the case
$t(a,c)>0$
if and only if
$a,b,c$
are events on a timelike line. We first show in sect.~\ref{subsect:time_inequality_ds}
that this inequality follows from properties of Lambert quadrilaterals.
Then in sect.~\ref{subsect:hierarchy}, we discuss a hierarchy of time conditions, 
which includes the time inequality,
and show that every timed causal space
$T\in\cT$
satisfies the {\em weak time inequality,} see Theorem~\ref{thm:wti}.
In sect.~\ref{subsect:monotone_vp} we describe monotone M\"obius structures 
which satisfy Variational Principle, (VP), the most strong time condition 
from the list, and in sect.~\ref{subsect:convex_moeb} also {\em convex} 
M\"obius structures. We show that these two classes contain the canonical M\"obius structure,
and that the first one contains a neighborhood of the canonical structure in a fine
topology.

\subsection{The time inequality for $\ds^2$ via $\hyp^2$}
\label{subsect:time_inequality_ds}

The time inequality for de Sitter 2-space
$\ds^2$
follows from properties of Lambert quadrilaterals in
$\hyp^2$.
This goes of course via the canonical M\"obius structure 
$M_0$
on the common absolute
$S^1$.
More precisely, we use the fact that harmonicity
of a 4-tuple
$((x,y),(z,u))\sub S^1$
with respect to
$M_0$
is equivalent to orthogonality of the geodesics
$xy$, $zu\sub\hyp^2$.

Recall that a {\em Lambert quadrilateral} 
$\al\be\ga o$
in the hyperbolic plane
$\hyp^2$ 
has three right angles at
$\al$, $\be$
and
$\ga$.
The fours angle at
$o$
is acute, and
$|\al\be|<|o\ga|$, $|\be\ga|<|\al o|$.
Now, we explain how these properties imply the time inequality for 
$\ds^2$.

\begin{figure}[htbp]
\centering
\psfrag{a}{$a$}
\psfrag{b}{$b$}
\psfrag{c}{$c$}
\psfrag{h}{$d$}
\psfrag{hpr}{$p$}
\psfrag{hprpr}{$q$}
\psfrag{o}{$o$}
\psfrag{al}{$\al$}
\psfrag{alpr}{$\al'$}
\psfrag{be}{$\be$}
\psfrag{bepr}{$\be'$}
\psfrag{ga}{$\ga$}
\psfrag{gapr}{$\ga'$}
\includegraphics[width=0.6\columnwidth]{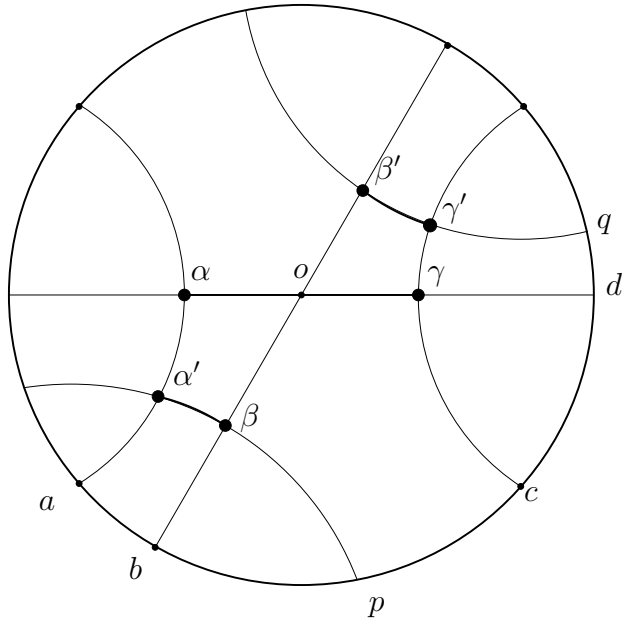}
\caption{The time inequality in $\ds^2$}\label{fi:time}
\end{figure}

Let
$a$, $b$, $c$
be events in
$\ay$
such that 
$a<b<c$
for the order
$<:=<_b$.
We consider a generic case when no pair of events
$(a,b)$, $(b,c)$
lies on a light line, and the events are not on a common
timelike line. Then there are events
$d$, $p$, $q\in\ay$
with
$a,b\in h_p$, $a,c\in h_d$, $b,c\in h_q$.
We pass to the 
$\hyp^2$-picture,
and draw the respective timelike lines as geodesics in
$\hyp^2$
with the same ends on the absolute
$S^1$.
Since the time in
$\ds^2$
and the distance in
$\hyp^2$
are computed via cross-ratios with respect to
$M_0$,
we have 
$t(a,c)=|\al\ga|$, $t(a,b)=|\al'\be|$, $t(b,c)=|\be'\ga'|$,
see Figure~\ref{fi:time}.

But
$|\al\ga|=|\al o|+|o\ga|$,
and quadrilaterals
$\al\al'\be o$, $\ga\ga'\be' o$
have right angles at
$\al$, $\al'$, $\be$
respectively at
$\ga$, $\ga'$, $\be'$,
i.e., they are Lambert quadrilaterals. Thus
$|\al o|>|\al'\be|$, $|o\ga|>|\be'\ga'|$,
and we obtain
$$t(a,c)>t(a,b)+t(b,c).$$

\subsection{Hierarchy of time conditions}
\label{subsect:hierarchy}

We assume that a timed causal space
$T=\{\ay,\cH,t\}\in\cT$
is fixed together with the respective monotone M\"obius structure
$M=\wh M(T)\in\cM$.

We say that an event
$b\in\ay$
is {\em strictly between} events
$a$
and
$c\in\ay$
if
$a$
and
$c$
lie on different open arcs in
$X$
defined by
$b$.
Note that in this case,
$a$, $b$, $c$
are pairwise in the strong causal relation, in particular,
$a<b<c$ 
for appropriately chosen
$<:=<_b$. 

Let
$a=(o,o')$, $b=(\om,\om')\in\ay$
be events in the strong causal relation such that the pairs
$(o,\om')$
and 
$(o',\om)$
separate each other. Then
$(o,\om),(o',\om')$
are also in the strong causal relation. Let
$d=(x,x')\in\ay$
be an event strictly between
$(o,\om)$
and 
$(o',\om')$.
We denote by
$$t_d^+(a,b)=t(o_d,\om_d),\quad t_d^-(a,b)=t(o_d',\om_d').$$
In general,
$t_d^+(a,b)\neq t_d^-(a,b)$.
However, if
$a,b\in h_d$,
then 
$t_d^+(a,b)=t_d^-(a,b)=t(a,b)$
by definition of
$t(a,b)$,
see (\ref{eq:time}), (\ref{eq:time_different}) and 
Lemma~\ref{lem:unique_common_perpendicular}. 

We consider the function
$$F_{ab}(d)=\frac{1}{2}(t_d^+(a,b)+t_d^-(a,b))$$
on the set 
$D_{ab}$
of events
$d\in\ay$
that are strictly between
$(o,\om)$
and 
$(o',\om')$,
and introduce the following list of time conditions for 
$T$
and therefore simultaneously for 
$M$.

\begin{itemize}
 \item [(VP)] Variational principle: the infimum of
$F_{ab}$
is taken  at unique
$d_0\in D_{ab}$
for which
$a,b\in h_{d_0}$;
 \item[(LQI)] Lambert quadrilateral inequality:
$$F_{ab}(d)>F_{ab}(d_0)$$
for every
$d\in D_{ab}\sm d_0$
such that 
$a\in h_d$;
 \item[(TI)] Time inequality: 
$$t(a,b)+t(b,c)\le t(a,c)$$
for any 
$a<b<c$
with the equality in the case
$t(a,c)>0$
if and only if
$a,b,c$
are events on a timelike line;
\item[(WTI)] Weak time inequality:
$$t(a,b)+t(b,c)<t(a,c)$$
for any
$a<b<c$
such that 
$b$
lies on a light line either with
$a$
or with
$c$,
and 
$a$, $c$
are not on a light line.
\end{itemize}

We have the following implications
$$\mathrm{(VP)}\Rightarrow\mathrm{(LQI)}\Rightarrow\mathrm{(TI)}\Rightarrow\mathrm{(WTI)}.$$
The first and the last implications are obvious, and we explain the second
implication in Proposition~\ref{pro:lqi_ti}. For the canonical
M\"obius structure
$M_0$,
the geometric meaning of the function
$F_{ab}:D_{ab}\to\R$
is especially clear.

\begin{pro}\label{pro:functional_canonical} Let
$a=(o,o')$, $b=(\om,\om')\in\ay$
be events in the strong causal relation such that the pairs
$(o,\om')$
and 
$(o',\om)$
separate each other,
$d=(x,x')\in D_{ab}$.
Then for the canonical M\"obius structure
$M_0$,
the value
$F_{ab}(d)$
is the distance in
$\hyp^2$
between the points 
$p=oo'\cap xx'$, $q=\om\om'\cap xx'$
at which the geodesic
$xx'\sub\hyp^2$
intersects the geodesics
$oo'$
and 
$\om\om'$.
\end{pro}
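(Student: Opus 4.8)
The plan is to push everything through the canonical correspondence into $\hyp^2$, where the three events $a=(o,o')$, $b=(\om,\om')$, $d=(x,x')$ become the geodesics $A=oo'$, $B=\om\om'$, $D=xx'$, and the timelike line $h_d$ becomes the pencil of geodesics orthogonal to $D$ (using that for $M_0$ harmonicity of $((x,x'),(\cdot,\cdot))$ is orthogonality to $D$, see sect.~\ref{subsect:recovering_hyp}). Under this dictionary the events $o_d,\om_d\in h_d$ are exactly the geodesics perpendicular to $D$ issuing from the ideal points $o,\om$; write $P_o,P_\om\in D$ for their feet, and $P_{o'},P_{\om'}$ for the feet of the perpendiculars from $o',\om'$. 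Since for $M_0$ the time between two events on a timelike line equals the $\hyp^2$-distance between the corresponding feet on the dual geodesic (this is precisely the recovery computation of sect.~\ref{subsect:auto} together with formula (\ref{eq:distance_log})), we get $t_d^+(a,b)=|P_oP_\om|$ and $t_d^-(a,b)=|P_{o'}P_{\om'}|$ measured along $D$, hence $F_{ab}(d)=\tfrac12\bigl(|P_oP_\om|+|P_{o'}P_{\om'}|\bigr)$. Thus the statement reduces to a relation between these feet and the intersection points $p=A\cap D$, $q=B\cap D$.

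First I would establish the midpoint lemma: the point $p=A\cap D$ is the midpoint of $P_o$ and $P_{o'}$ on $D$, and likewise $q$ is the midpoint of $P_\om,P_{\om'}$. The clean argument uses the point reflection (rotation by $\pi$) $\sigma_p$ about $p$: it is an isometry fixing $p$ and reversing every geodesic through $p$, so it preserves both $A$ and $D$ while interchanging their ideal endpoints, $\sigma_p(o)=o'$ and $\sigma_p(D)=D$. Carrying the perpendicular from $o$ to the perpendicular from $o'$, uniqueness of the perpendicular forces $\sigma_p(P_o)=P_{o'}$; since $\sigma_p|_D$ is the involution of $D$ fixing $p$, the point $p$ is equidistant from $P_o$ and $P_{o'}$, i.e.\ it is their midpoint. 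The same reasoning at $q$ handles $P_\om,P_{\om'}$.

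Writing an arclength coordinate $s(\cdot)$ on $D$, the midpoint lemma gives $|pq|=|s_p-s_q|=\tfrac12\bigl|(s_{P_o}-s_{P_\om})+(s_{P_{o'}}-s_{P_{\om'}})\bigr|$, whereas $F_{ab}(d)=\tfrac12\bigl(|s_{P_o}-s_{P_\om}|+|s_{P_{o'}}-s_{P_{\om'}}|\bigr)$. These agree exactly when the two increments $s_{P_o}-s_{P_\om}$ and $s_{P_{o'}}-s_{P_{\om'}}$ have the same sign, so the remaining task is a combinatorial check of the order of the four feet along $D$. Here I would invoke the hypotheses: the separation of $(o,\om')$ and $(o',\om)$ together with the strong causality of $a,b$ pins the cyclic order of $o,o',\om,\om'$ down to $o,o',\om',\om$, and "$d$ strictly between $(o,\om)$ and $(o',\om')$" forces $x,x'$ into the two complementary gaps, yielding the full cyclic order $o,x,o',\om',x',\om$. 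Normalizing $x=0$, $x'=\infty$ so that $D$ is the imaginary axis and $\rho_d(\xi)=-\xi$, this order reads $\om<o<0<o'<\om'$ on $\R$; since the foot of the perpendicular from an ideal point $\xi$ sits at arclength coordinate $\ln|\xi|$, both $\ln|o|-\ln|\om|$ and $\ln o'-\ln\om'$ are negative. Hence the two absolute values add without cancellation and $F_{ab}(d)=|pq|$.

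The main obstacle is the sign bookkeeping of the last paragraph rather than any hard geometry: the midpoint lemma is immediate from the point reflection $\sigma_p$, and the whole identity collapses to the verification that the two perpendicular-foot increments are co-oriented — which is precisely what the separation hypotheses of the statement guarantee. The one point demanding care is confirming that those hypotheses leave no other admissible cyclic arrangement of the six boundary points, so I would carry out the short case analysis of the cyclic order explicitly before reading off the signs.
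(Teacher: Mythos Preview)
Your proof is correct and follows essentially the same route as the paper's: translate $t_d^\pm(a,b)$ into $\hyp^2$-distances between the feet of the perpendiculars from $o,\om$ (resp.\ $o',\om'$) onto $xx'$, show that $p$ and $q$ are the midpoints of the respective pairs of feet, and average. The paper invokes the angle-of-parallelism formula for the midpoint claim where you use the point reflection $\sigma_p$, and it leaves the order/sign verification implicit where you carry it out explicitly; otherwise the arguments coincide.
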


\begin{proof} Using that the time between events in a timed causal
space and the distance in
$\hyp^2$
are computed via the respective cross-ratios, we see that
$t_d^+(a,b)=t(o_d,\om_d)$
is distance in
$\hyp^2$
between projections 
$\wh o$, $\wh\om$
of
$o$, $\om$
to the geodesic
$xx'\sub\hyp^2$,
and similarly,
$t_d^-(a,b)=t(o_d',\om_d')$
is distance between projections 
$\wh o'$, $\wh\om'$
of
$o'$, $\om'$
to the same geodesic
$xx'$.
By the angle parallelism formula,
$\wh op=p\wh o'$
and 
$\wh\om q=q\wh\om'$.
Therefore,
$F_{ab}(d)=\frac{1}{2}(|\wh o\wh\om|+|\wh o'\wh\om'|)=|pq|$. 
\end{proof}

\begin{cor}\label{cor:vp_canonical} The canonical M\"obius structure
satisfies (VP).
\end{cor}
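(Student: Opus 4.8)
The plan is to read everything off the hyperbolic description of $F_{ab}$ furnished by Proposition~\ref{pro:functional_canonical}. For the canonical structure $M_0$ an event $d=(x,x')$ corresponds to the geodesic $xx'\sub\hyp^2$, and $F_{ab}(d)=|pq|$, where $p=oo'\cap xx'$ and $q=\om\om'\cap xx'$ are the points in which $xx'$ meets the geodesics $oo'$ and $\om\om'$ associated with $a=(o,o')$ and $b=(\om,\om')$. First I would observe that $d\in D_{ab}$ precisely when the geodesic $xx'$ crosses both $oo'$ and $\om\om'$, so that $p$ and $q$ are defined; this is a direct translation of the separation conditions on the cyclic order of $o,o',\om,\om'$ on $S^1=\di\hyp^2$ into the crossing conditions for the corresponding geodesics. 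In this way $F_{ab}$ becomes the length of the chord of $xx'$ joining $oo'$ to $\om\om'$, and (VP) turns into a statement about minimizing that length over all geodesics meeting both lines.

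Next I would use that $a$ and $b$ are in the strong causal relation to conclude that the geodesics $oo'$ and $\om\om'$ are divergent (ultraparallel) in $\hyp^2$: their ideal endpoint pairs do not separate each other and are disjoint, so the geodesics neither meet nor share an ideal endpoint. Two divergent geodesics in $\hyp^2$ admit a unique common perpendicular $\ell$, and I let $d_0$ be the event whose dual geodesic is $\ell$. Since $\ell$ crosses both $oo'$ and $\om\om'$, its ideal endpoints lie in the two arcs prescribing $D_{ab}$, so $d_0\in D_{ab}$; and the feet $p_0=oo'\cap\ell$, $q_0=\om\om'\cap\ell$ realize the distance between the two lines, whence $F_{ab}(d_0)=|p_0q_0|=\dist(oo',\om\om')$.

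The minimization is then immediate: for any $d\in D_{ab}$ the points $p\in oo'$, $q\in\om\om'$ satisfy $F_{ab}(d)=|pq|\ge\dist(oo',\om\om')=F_{ab}(d_0)$, so the infimum is attained at $d_0$. For uniqueness I would invoke the fact that the pair of nearest points of two divergent geodesics is unique, namely the pair of feet of the common perpendicular; hence equality $|pq|=\dist(oo',\om\om')$ forces $p=p_0$, $q=q_0$, and since two distinct points of $\hyp^2$ determine their geodesic, $xx'=\ell$, i.e. $d=d_0$. Finally, orthogonality of $\ell$ to $oo'$ and to $\om\om'$ is, for $M_0$, exactly harmonicity of the 4-tuples $(a,d_0)$ and $(b,d_0)$ (see sect.~\ref{subsect:time_inequality_ds}), so $a\perp d_0$ and $b\perp d_0$, i.e. $a,b\in h_{d_0}$. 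This verifies (VP).

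The only genuinely delicate points are the bookkeeping ones: checking that the combinatorial description of $D_{ab}$ coincides with the class of geodesics crossing both $oo'$ and $\om\om'$, and that the common perpendicular $\ell$ indeed lands in $D_{ab}$ rather than in the complementary family of chords. Both reduce to tracking the cyclic order of $o,o',\om,\om'$ on $S^1$ (which the separation hypotheses fix up to symmetry) together with the location of the two ideal endpoints of $\ell$. Once this is pinned down, the inequality and its equality case are standard facts about divergent geodesics in $\hyp^2$, and the identification of orthogonality with $M_0$-harmonicity has already been recalled in the text.
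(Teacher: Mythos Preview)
Your proposal is correct and follows exactly the route the paper intends: the paper's proof is the single sentence ``This immediately follows from properties of the distance in $\hyp^2$ between points on geodesics,'' and you have simply unpacked what those properties are (ultraparallel geodesics, unique common perpendicular realizing the distance, orthogonality $=$ $M_0$-harmonicity). There is no substantive difference in approach.
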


\begin{proof} This immediately follows from properties of the distance in
$\hyp^2$
between points on geodesics. 
\end{proof}

\subsection{The weak time inequality}
\label{subsect:wti}

Here, we prove the following

\begin{thm}\label{thm:wti} Any timed causal space
$T=\{\ay,\cH,t\}\in\cT$
satisfies (WTI).
\end{thm}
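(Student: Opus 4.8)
Theorem~\ref{thm:wti}: any timed causal space $T\in\cT$ satisfies (WTI), i.e. $t(a,b)+t(b,c)<t(a,c)$ whenever $a<b<c$, $b$ lies on a light line with either $a$ or $c$, and $a,c$ are not on a light line.

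**Let me plan the proof.** The key simplification in (WTI) versus the full time inequality is that $b$ is constrained to share a light line with one of the endpoints. Say $b$ and $a$ lie on a common light line $p_x$; the symmetric case with $c$ follows by the reflection $t(a,c)=t(c,a)$ from Axiom~(t3). So $a=(x,w)$ and $b=(x,v)$ for a common point $x\in X$ and some $w,v\in X$. Since $a,c$ are not on a light line but are in the causal relation (from $a<c$), they are in the strong causal relation, so by Corollary~\ref{cor:common_perpendicular_prescribed} and Lemma~\ref{lem:unique_common_perpendicular} there is a unique common perpendicular $e\in\ay$ with $a,c\in h_e$; write $e=(\xi,\eta)$.

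**The computational core.** Everything should be computed in a single semi-metric $d\in M$ (where $M=\wh M(T)$), using Lemma~\ref{lem:pro:timed_monotone:axiom_m}, which says the time of $T$ is computed via $M$-cross-ratios in the usual way. The plan is to write all three times $t(a,b)$, $t(b,c)$, $t(a,c)$ via formula~(\ref{eq:time}) and its variants~(\ref{eq:time_different}), and then reduce $t(a,b)+t(b,c)<t(a,c)$ to a single inequality among distances $|\,\cdot\,|=d(\cdot,\cdot)$ in a fixed semi-metric (choosing the infinitely remote point to clear denominators, as in the proof of Lemma~\ref{lem:unique_common_perpendicular}). The crucial input will be \emph{Axiom~(M)/monotonicity}: because $b$ lies on the light line $p_x$ through a common endpoint with $a$, the event $b$ is ``closer to the boundary'' than the true projection onto $h_e$ would be, and monotonicity (in the form of Corollary~\ref{cor:interval_monotone}) gives the strict inequality governing the relevant interval containments. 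Concretely, after fixing the cyclic order of the points $\{x,w,v,\xi,\eta,\dots\}$ on $X_\omega$ for a well-chosen infinitely remote point $\omega$, the strict inequality $t(a,b)+t(b,c)<t(a,c)$ should follow by exponentiating and comparing products of distances, each comparison being an instance of Corollary~\ref{cor:interval_monotone} applied to nested intervals, with strictness coming from the nondegeneracy ($a,c$ not on a light line forces the relevant points to be distinct). This mirrors the Lambert-quadrilateral argument of sect.~\ref{subsect:time_inequality_ds}, but avoids any appeal to $\hyp^2$ and uses only Axiom~(M).

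**The main obstacle.** I expect the hard part to be the bookkeeping of the cyclic order of all the boundary points and choosing the infinitely remote point so that every needed comparison is a genuine nested-interval containment to which Corollary~\ref{cor:interval_monotone} applies with the correct direction and with strict inequality. In particular one must verify that the common perpendicular $e=(\xi,\eta)$ interleaves correctly with $x,w,v$ so that the projection $b_e$ of $b$ onto $h_e$ lies strictly between the projections $a_e=a$ and $c_e=c$ (which gives $t(a,b_e)+t(b_e,c)=t(a,c)$ by Axiom~(t4a)), and then that the ``light-line'' position of $b$ forces $t(a,b)<t(a,b_e)$ and $t(b,c)<t(b_e,c)$ strictly. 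Assembling these three facts---(t4a) for the equality along $h_e$, plus two strict monotonicity estimates---yields the claim; the delicate point is purely the case analysis of the separation pattern, which Axiom~(M) and Lemma~\ref{lem:harm_separate} should control completely.
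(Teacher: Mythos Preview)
Your plan has a real gap at the step ``$t(b,c)<t(b_e,c)$''. First, the ``projection $b_e$'' of the event $b$ onto $h_e$ is not defined in the paper; the notation $x_e$ applies to points $x\in X\sm e$, not to events. Since $b=(x,v)$ and $a=(x,w)\in h_e$, the natural candidate $x_e=(x,\rho_e(x))=(x,w)$ equals $a$ itself, making your first inequality $t(a,b)<t(a,b_e)$ read $0<0$. If instead you set $b_e:=v_e$, then the crucial inequality $t(b,c)<t(v_e,c)$ compares a time computed along $h_f$ (the common perpendicular to $b,c$) with a time along $h_e$; Corollary~\ref{cor:interval_monotone} and Axiom~(M) compare distances along nested intervals in a \emph{single} metric, and give no mechanism for comparing cross-ratios built from the unrelated point sets $\{f_1,f_2\}$ and $\{\xi,\eta\}$. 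The Lambert-quadrilateral picture you invoke from sect.~\ref{subsect:time_inequality_ds} is precisely the condition (LQI), which is strictly stronger than what holds for a general $T\in\cT$ (see the hierarchy in sect.~\ref{subsect:hierarchy}); so this second strict inequality cannot come from monotonicity alone.

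The paper's argument is organized differently, and the ingredient you are missing is Axiom~(t5). Taking by symmetry $b,c$ on a common light line so that they share a point $\om$, write $a=(o,o')$, $b=(\om,\om')$, $c=(\om,\om'')$, and let $d,d'$ be the common perpendiculars to $(a,b)$ and $(a,c)$. The key point is that \emph{both} $t(a,b)$ and $t(a,c)$ are values of the single function $F_{ab}^+(\,\cdot\,)=t(o_{(\cdot)},\om_{(\cdot)})$ on $D_{ab}$: since $a\in h_d\cap h_{d'}$ we get $o_d=o_{d'}=a$, and the shared point $\om$ forces $\om_d=b$, $\om_{d'}=c$. Axiom~(t5) then rewrites $F_{ab}^+(d)=t(x_e,y_e)$ for $d=(x,y)$ and the \emph{fixed} event $e=(o,\om)$, reducing the whole comparison to times along the single line $h_e$, where (t4a) and interval containment give monotonicity (Lemma~\ref{lem:monotone_plus_dist}). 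The remaining order relation $d<d'<e$ is handled by Lemma~\ref{lem:order_harmonic}. The light-line hypothesis is used exactly to make $b$ and $c$ share $\om$, so that one monotone function captures both times; your projection-onto-$h_e$ scheme does not exploit this.
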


\begin{lem}\label{lem:order_harmonic} Assume distinct events
$a$, $b$
lie on a common timelike line,
$a,b\in h_c$,
where
$a=(x,y)$, $b=(z,u)$, $c=(v,w)\in\ay$,
and suppose that 
$v$
lies on the open arc 
$\ga$
between
$x$, $y$
that does not contain
$b$.
Then for every
$s\in\ga$, $s\neq v$,
and for 
$d=(s,t)\in h_a$, $d'=(s,t')\in h_b$
we have:
$t'$
lies on the open arc 
$\si$
between
$w$, $t$
that does not contain
$s$.
\end{lem}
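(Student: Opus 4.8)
The plan is to work entirely on the circle $X=S^1$, translating the statement about events and timelike lines into a statement about the cyclic order of the eight points $x,y,z,u,v,w,s,t,t'$. The hypotheses give several harmonicity relations: $a,b\in h_c$ means $(a,c)$ and $(b,c)$ are harmonic, $d=(s,t)\in h_a$ means $(d,a)$ is harmonic, and $d'=(s,t')\in h_b$ means $(d',b)$ is harmonic. By Lemma~\ref{lem:harm_separate}, each harmonic pair separates each other and the relevant point is a unique midpoint for an appropriate semi-metric. The first step is to fix the cyclic order of $x,y,z,u,v,w$ forced by the hypotheses. Since $a=(x,y)$, $b=(z,u)$ separate each other (they lie on the common timelike line $h_c$, hence are in the causal relation, but as distinct events on a timelike line they separate each other by Axiom~(h2) applied via $c$), and since $c=(v,w)$ is harmonic to both with $v$ on the arc $\ga\subset xy$ not containing $b$, I can pin down the cyclic order, say $x\,v\,y\,z\,w\,u$ up to relabeling.

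\textbf{Locating the reflection points $t$ and $t'$.}
Next I would introduce the reflections $\rho_a,\rho_b:X\to X$ from Lemma~\ref{lem:reflection}. The point $t$ is determined by $t=\rho_a(s)$ (since $(s,t)=s_a\in h_a$ by Axiom~(h4)), and likewise $t'=\rho_b(s)$. Because $\rho_a$ exchanges the two closed arcs $a^+,a^-$ bounded by $x,y$ and reverses the induced orders, and $s\in\ga$ lies strictly between $x$ and $y$ on the arc not containing $b$, the image $t=\rho_a(s)$ lies on the opposite arc, i.e.\ on the arc between $x,y$ containing $z,w,u$. The analogous statement holds for $t'=\rho_b(s)$ with respect to the arc bounded by $z,u$. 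The goal is to show $t'$ lies on the open arc $\si$ between $w$ and $t$ not containing $s$, which now becomes a comparison of the two reflected positions $\rho_a(s)$ and $\rho_b(s)$ relative to $w$.

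\textbf{The comparison via monotonicity.}
The heart of the argument is to compare the positions of $t=\rho_a(s)$ and $t'=\rho_b(s)$ using Axiom~(M) (via Corollary~\ref{cor:interval_monotone}). I would pass to a semi-metric with infinitely remote point $s$, so that the harmonicity conditions become midpoint conditions: $x$ is equidistant from $s$ and $t$ is wrong---rather, harmonicity of $(s,t)$ with $(x,y)$ means $|xs|\cdot|yt|=|xt|\cdot|ys|$, and symmetrically for the $b$-pair. Working in the semi-metric $|\cdot|_s$ with $s$ at infinity, I would translate these into comparisons of distances $|xt|_s,|yt|_s$ versus $|zt'|_s,|ut'|_s$, and then use the known cyclic order of $v,w$ relative to $x,y,z,u$ together with the harmonicity of $c=(v,w)$ to both $a$ and $b$. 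The point $w$, being the midpoint-type partner on the far arc, serves as the reference separating $t$ from $t'$. Concretely, I expect to show that the interval from $w$ to $t$ (on the correct side) strictly contains or is contained in the interval from $w$ to $t'$, and monotonicity forces the strict containment in exactly the direction that places $t'$ on $\si$.

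\textbf{Main obstacle.}
The routine part is the order bookkeeping; the genuine difficulty is organizing the monotonicity comparisons so they chain correctly. Specifically, I must relate $\rho_a$ and $\rho_b$ without a common fixed semi-metric making both reflections isometries simultaneously---$\rho_a$ is an isometry for the semi-metric centered at the $a$-harmonic partner, and $\rho_b$ for a different one. The hard part will be combining the two harmonicity relations (for $a$ and for $b$) with the single shared harmonicity relation coming from $c$ (which links both $a$ and $b$) to deduce the \emph{strict} separation of $t'$ from $s$ by the arc endpoints $w,t$. I anticipate that the relation $a,b\in h_c$ is exactly what couples the two reflections: it provides $\rho_a(v)$ and $\rho_b(v)$ landing at $w$ in a controlled way, giving $w$ as the common reference point, after which a single application of Corollary~\ref{cor:interval_monotone} in a well-chosen semi-metric closes the argument.
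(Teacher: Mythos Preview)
Your approach via direct monotonicity comparisons is quite different from the paper's, which gives a short continuity argument. The paper observes that at $s=v$ one has $t=t'=w$ (since $c=(v,w)\in h_a\cap h_b$ means $\rho_a(v)=\rho_b(v)=w$), and that as $s$ moves toward an endpoint of $\ga$, say $s\to x$, the point $t=\rho_a(s)\to y$ leaves the arc between $z$ and $u$ containing $w$, while $t'=\rho_b(s)$ remains trapped in that arc; in this limit $t'$ visibly lies on $\si$. Since $\rho_a,\rho_b$ are continuous (Lemma~\ref{lem:reflection}), $t$ and $t'$ vary continuously in $s$. For $s\neq v$ one has $t'\neq w$ (otherwise $(s,w)\in h_b$ forces $s=v$ by (h4)) and $t'\neq t$ (otherwise $(s,t)$ would be a second common perpendicular to $a$ and $b$, contradicting Lemma~\ref{lem:unique_common_perpendicular}). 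Hence $t'$ never meets the boundary $\{w,t\}$ of $\si$, and by connectedness $t'\in\si$ for all $s\in\ga\setminus\{v\}$.

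Your plan may be salvageable but is incomplete at exactly the step you flag as the ``hard part'', and it contains a slip: you claim $a,b$ separate each other, but Axiom~(h3) says events on a common timelike line are in the causal relation, i.e.\ do \emph{not} separate. (Your stated cyclic order $xvyzwu$ is in fact consistent with non-separation, so the conclusion survives even though the justification is wrong.) More seriously, chaining the two harmonicity conditions for $a$ and $b$ through the shared event $c$ to produce a strict inequality in a single semi-metric is precisely where a direct argument becomes delicate, and you have not carried it out. The paper sidesteps this entirely: rather than comparing cross-ratios, it uses uniqueness of the common perpendicular as a topological obstruction preventing $t'$ from crossing out of $\si$.
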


\begin{proof} Moving
$s$
along
$\ga$,
observe that for 
$s=v$
we have
$t=t'$,
while for 
$s$
approaching to
$x$
or
$y$
the point
$t$
is not on the arc between
$z$, $u$
that contains
$w$.
Therefore
$t'$
lies on 
$\si$
for these extremal cases. By continuity of reflections
$\rho_a$, $\rho_b:X\to X$
and Lemma~\ref{lem:unique_common_perpendicular} we have
$t'\in\si$
for every
$s\in\ga$.
\end{proof}

\begin{lem}\label{lem:monotone_plus_dist} Let
$a=(o,o')$, $b=(\om,\om')\in\ay$
be events in the strong causal relation such that the pairs
$(o,\om')$
and 
$(o',\om)$
separate each other. Then the function
$F_{ab}^+(d)=t(o_d,\om_d)$
is monotone on the set 
$D_{ab}$
of events
$d\in\ay$
that are strictly between
$e=(o,\om)$
and 
$e'=(o',\om')$, $F_{ab}^+(d)<F_{ab}^+(d')$
for any 
$d$, $d'\in D_{ab}$
with 
$d<d'<e$.
\end{lem}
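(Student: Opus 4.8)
The plan is to reduce $F_{ab}^+$ to a single cross-ratio and then read off monotonicity from Corollary~\ref{cor:interval_monotone}. First I would fix the cyclic order: since $(o,\om')$ and $(o',\om)$ separate each other, the four points occur in the cyclic order $o,o',\om',\om$, and any $d=(x,x')\in D_{ab}$ strictly between $e=(o,\om)$ and $e'=(o',\om')$ must have one endpoint, say $x$, on the open arc between $o$ and $o'$ and the other, $x'$, on the open arc between $\om'$ and $\om$, so that the full cyclic order is $o,x,o',\om',x',\om$. Because $o_d=(o,\rho_d(o))$ and $\om_d=(\om,\rho_d(\om))$ both lie on $h_d$ with common perpendicular $d=(x,x')$, formula~(\ref{eq:time}) gives
\[
F_{ab}^+(d)=t(o_d,\om_d)=\left|\ln\frac{|x\om|\cdot|x'o|}{|xo|\cdot|x'\om|}\right|.
\]
Since this cross-ratio is M\"obius-invariant, I would re-express it through the metric inversion $|ab|_\om=|ab|/(|a\om|\,|b\om|)$ at $\om$; the two factors containing $\om$ cancel and it collapses to $F_{ab}^+(d)=\big|\ln(|ox'|_\om/|ox|_\om)\big|$. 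As the arc $ox\sub X_\om$ is contained in $ox'$, Corollary~\ref{cor:interval_monotone} yields $|ox|_\om<|ox'|_\om$, so in fact $F_{ab}^+(d)=\ln(|ox'|_\om/|ox|_\om)>0$ and the absolute value is resolved.

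Next I would translate the order $d<d'<e$ into arc positions. Taking the time-orientation for which $e$ is the future endpoint, $d<d'$ means $d'\in C_d^+$, i.e.\ $d'$ is contained in the closed arc $d^+$ of $d$ carrying the block $\{o,\om\}$; combined with the requirement that $d'=(y,y')\in D_{ab}$ still separate $e$ from $e'$, this forces the two endpoints of $d'$ to straddle $\{o,\om\}$ from the outside, so that in the cyclic order above $y'$ lies on the open arc $(o,x)$ and $y$ lies on the open arc $(x',\om)$. In terms of the semi-metric $|\cdot|_\om$ this says precisely $oy'\sub ox\sub X_\om$ and $ox'\sub oy\sub X_\om$.

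Finally, applying Corollary~\ref{cor:interval_monotone} to each inclusion gives $|oy'|_\om<|ox|_\om$ and $|ox'|_\om<|oy|_\om$, whence
\[
F_{ab}^+(d')=\ln\frac{|oy|_\om}{|oy'|_\om}>\ln\frac{|ox'|_\om}{|ox|_\om}=F_{ab}^+(d),
\]
which is the asserted monotonicity. The cross-ratio computation and the inversion identity are routine; the step I expect to be the main obstacle is the combinatorial one, namely deducing rigorously from the definitions of the causal cone $C_d^+$ and of \emph{strictly between} that $d<d'<e$ entails exactly the arc inclusions $oy'\sub ox$ and $ox'\sub oy$, and that no other placement of the endpoints of $d'$ is compatible with $d'\in D_{ab}$ (in particular that neither endpoint of $d'$ can fall on the sub-arc between $\om$ and $o$, which would split $\{o,\om\}$). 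Once this is pinned down, monotonicity follows immediately from Axiom~(M) via Corollary~\ref{cor:interval_monotone}.
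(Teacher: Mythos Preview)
Your argument is correct and complete; the combinatorial step you flag as the main obstacle is indeed routine once the cyclic order $o,x,o',\om',x',\om$ is fixed, and your reasoning for it (that $d'\subset d^+$ together with $d'\in D_{ab}$ forces the endpoints of $d'$ to lie on the two sub-arcs $(o,x)$ and $(x',\om)$, never on the sub-arc $(\om,o)$) is exactly right. The only cosmetic point is that $d<d'$ allows one endpoint of $d'$ to coincide with an endpoint of $d$, so one of the inclusions $oy'\sub ox$, $ox'\sub oy$ may be an equality, but since $d\neq d'$ the other is strict and the inequality still follows.

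Your route, however, differs from the paper's. The paper does not compute the cross-ratio for $t(o_d,\om_d)$ directly; instead it invokes Axiom~(t5) to swap the roles of $d$ and $e$: writing $d=(x,y)$, one has $t(o_d,\om_d)=t(x_e,y_e)$, so $F_{ab}^+(d)$ is simply the time-length of the segment on the fixed timelike line $h_e$ between the projections $x_e$ and $y_e$. As $d'$ moves from $d$ toward $e$, the corresponding segment on $h_e$ strictly enlarges, and the conclusion follows from additivity~(t4a). This is shorter and avoids any metric inversion or explicit cross-ratio manipulation; the whole argument takes place on a single timelike line. Your approach, by contrast, stays entirely on the M\"obius side, using only formula~(\ref{eq:time}) and Axiom~(M) via Corollary~\ref{cor:interval_monotone}, and never appeals to~(t5). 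That makes it slightly longer but conceptually independent of the symmetry axiom, which could be an advantage in contexts where one wants to minimize the axioms used.
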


\begin{proof} Let 
$d=(x,y)$.
By Axiom~(t5) we have 
$t(o_d,\om_d)=t(x_e,y_e)$.
Thus
$F_{ab}^+(d)=t(x_e,y_e)$.
For 
$d'=(x',y')$
between
$d$
and 
$e$,
the segment
$x_ey_e\sub h_e$
is contained in the segment
$x_e'y_e'\sub h_e$
and does not coincide with it (though, we do not exclude a possibility 
that these segments have a common end). Thus
$t(x_e,y_e)<t(x_e',y_e')$. 
\end{proof}

\begin{proof}[Proof of Theorem~\ref{thm:wti}] Let
$a,b,c\in\ay$
be events in the causal relation,
$a<b<c$,
and we assume without loss of generality that 
$b$, $c$
are on a light line. Then
$t(b,c)=0$,
and the required inequality is reduced to
$t(a,b)<t(a,c)$.
We assume furthermore without loss of generality that 
$a=(o,o')$, $b=(\om,\om')$
and the pairs 
$(o,\om')$, $(o',\om)$
separate each other. Since
$b$, $c$
are on a light line, we can assume that
$c=(\om,\om'')$.
Then the assumption
$a<b<c$
implies that 
$\om''$
is on the (open) arc 
$\al$
between
$\om$, $\om'$
that does not contain
$a$.

There is
$d=(x,y)\in\ay$
with
$a,b\in h_d$.
We assume that 
$x$
is on the arc 
$\be$
between
$o$, $o'$
that does not contain
$b$.
Then
$y\in\al$.
Similarly, there is
$d'=(x',y')\in\ay$
with 
$a,c\in h_{d'}$.
We also assume that 
$x'$
is on the arc 
$\be$.
Then
$y'$
is on the arc 
$\al'\sub\al$
between
$\om$, $\om''$. 
Note that
$d$, $d'\in D_{ab}$
and that 
$d\neq d'$
since 
$b\neq c$,
therefore
$x'\neq x$
because
$d$, $d'\in h_a$.

We claim that 
$x'$
lies on the arc 
$\be$
between
$x$
and 
$o$.
Indeed, since
$d$, $d'\in h_a$,
we otherwise would have by Lemma~\ref{lem:order_harmonic}, substituting 
$o$
for 
$v$, $o'$
for 
$w$,
$\om$
for 
$s$,
$\om'$
for 
$t$,
$\om''$
for 
$t'$,
that 
$\om''\not\in\al$
in contradiction with the previously established 
$\om''\in\al$.

It follows that 
$d<d'<e=(o,\om)$.
By Lemma~\ref{lem:monotone_plus_dist},
$F_{ab}^+(d)<F_{ab}^+(d')$.
On the other hand,
$F_{ab}^+(d)=t(o_d,\om_d)=t(a,b)$
and 
$F_{ab}^+(d')=t(o_{d'},\om_{d'})=t(a,c)$.
\end{proof}

\subsection{Implication (LQI)$\Longrightarrow$(TI)}
\label{subsect:lqi_ti}

Here, we show that the Lambert quadrilateral inequality implies the 
time inequality.

\begin{pro}\label{pro:lqi_ti} 
$\mathrm{(LQI)}\Rightarrow\mathrm{(TI)}$. 
\end{pro}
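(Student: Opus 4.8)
The plan is to assume (LQI) and deduce (TI) by isolating the degenerate configurations first and reserving (LQI) for the generic one. If $a,b,c$ lie on a common timelike line, then Axiom~(t4a) gives $t(a,b)+t(b,c)=t(a,c)$, which is precisely the equality asserted in (TI). If $b$ lies on a light line with $a$ or with $c$ while $a,c$ themselves are not on a light line, then Theorem~\ref{thm:wti} already furnishes the strict inequality. Finally, if $t(a,c)=0$, then by (t2) the events $a,c$ share an endpoint $z$ and lie on a light line $p_z$; I would observe that $a<b<c$ forces $z$ to be an endpoint of $b$ as well, since otherwise $z$ would have to lie simultaneously in two disjoint open arcs determined by $b$, so $b\in p_z$ and $t(a,b)=t(b,c)=0$. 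Thus it remains to treat the \emph{generic} case, in which $a,b,c$ are pairwise in the strong causal relation and do not lie on a common timelike line.

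In the generic case I would first invoke Corollary~\ref{cor:common_perpendicular_prescribed} to obtain an event $d$ with $a,c\in h_d$, and write $a=(o,o')$, $b=(\om,\om')$, $c=(\ga,\ga')$ with endpoints labelled compatibly with $a<b<c$. Because $a\in h_d$ the reflection $\rho_d$ interchanges $o$ and $o'$, whence $o_d=o'_d=a$; likewise $c\in h_d$ gives $\ga_d=\ga'_d=c$. Consequently
\begin{equation*}
F_{ab}(d)=\tfrac12\bigl(t(a,\om_d)+t(a,\om'_d)\bigr),\qquad F_{bc}(d)=\tfrac12\bigl(t(\om_d,c)+t(\om'_d,c)\bigr),
\end{equation*}
where $\om_d,\om'_d\in h_d$ are the projections of the endpoints of $b$. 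By (t4a) the timelike line $h_d$ carries a signed time-coordinate $\tau$ with $t(e,e')=|\tau(e)-\tau(e')|$; setting $A=\tau(a)$, $C=\tau(c)$, $P=\tau(\om_d)$, $P'=\tau(\om'_d)$ and assuming $A\le C$, the decisive claim is that $\om_d,\om'_d$ lie between $a$ and $c$ on $h_d$, i.e. $A\le P,P'\le C$. Granting this, the four times are $P-A$, $P'-A$, $C-P$, $C-P'$, and
\begin{equation*}
F_{ab}(d)+F_{bc}(d)=\tfrac12\bigl[(P-A)+(P'-A)+(C-P)+(C-P')\bigr]=C-A=t(a,c).
\end{equation*}

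Next I would apply (LQI) twice. For the pair $(a,b)$ the event $d$ satisfies $a\in h_d$ and $d\in D_{ab}$, and $d$ differs from the common perpendicular $d_0$ to $a,b$ (otherwise $b\in h_d$ and $a,b,c$ would share a timelike line), so (LQI) yields $F_{ab}(d)>F_{ab}(d_0)=t(a,b)$. Applying (LQI) to the pair $(c,b)$ in the same manner, using $c\in h_d$ and $F_{cb}=F_{bc}$, gives $F_{bc}(d)>t(b,c)$. Adding these two inequalities and inserting the identity above,
\begin{equation*}
t(a,c)=F_{ab}(d)+F_{bc}(d)>t(a,b)+t(b,c),
\end{equation*}
which is exactly (TI) in the generic case, where the inequality is strict and no equality is claimed since $a,b,c$ do not lie on one timelike line.

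The main obstacle is the betweenness claim $A\le P,P'\le C$, together with the labelling consistency guaranteeing $d\in D_{ab}\cap D_{cb}$. I would derive it from the hypothesis that $b$ is strictly between $a$ and $c$: this pins down the cyclic order of the six points $o,o',\om,\om',\ga,\ga'$ on $X=S^1$, placing $\om$ and $\om'$ on arcs squeezed between the endpoints of $a$ and those of $c$. Since projection to $h_d$ through $\rho_d$ reverses arc-orders consistently and is monotone along $h_d$ (Lemma~\ref{lem:order_harmonic} and Corollary~\ref{cor:interval_monotone}, in the spirit of the monotonicity established in Lemma~\ref{lem:monotone_plus_dist}), the projected events $\om_d,\om'_d$ must then fall on the sub-arc of $h_d$ delimited by $a$ and $c$, which is precisely the inequality $A\le P,P'\le C$.
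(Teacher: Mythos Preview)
Your approach is essentially the same as the paper's: reduce to the generic case, take $d$ with $a,c\in h_d$, apply (LQI) to each of the pairs $(a,b)$ and $(b,c)$ at $d$, and combine via the identity $F_{ab}(d)+F_{bc}(d)=t(a,c)$. The paper organizes the verification that $d\in D_{ab}\cap D_{bc}$ through Lemma~\ref{lem:common_perp_separation} (showing $d$ lies strictly between the common perpendiculars $p$ and $q$), whereas you argue directly from the cyclic order of the six endpoints; both routes work, and your betweenness claim $A\le P,P'\le C$ is exactly what the paper uses implicitly when invoking (t4a) on $a_d^\pm,b_d^\pm,c_d^\pm$. Your citations of Lemma~\ref{lem:order_harmonic} and Corollary~\ref{cor:interval_monotone} for this step are slightly off target --- the claim follows more simply from the fact that $d$ separates both $a$ and $c$ while $b$ is strictly between them, which forces one endpoint of $d$ onto each of the two ``short'' arcs and hence places $\om,\om'$ between the corresponding endpoints of $a$ and $c$ on the arcs $d^\pm$.
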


Assume
$b\in\ay$
is strictly between
$a,c\in\ay$.
Then by Corollary~\ref{cor:common_perpendicular_prescribed}, 
there are common perpendiculars
$p$
to
$a,b$
and 
$q$
to
$b,c$.

\begin{lem}\label{lem:common_perp_separation} Assume 
$a$, $c\in h_d$
and 
$b\in\ay\sm h_d$
is strictly between
$a$
and
$c$.
Then
$d$
is strictly between the common perpendiculars 
$p$
to
$a$, $b$
and 
$q$
to
$b$, $c$.
\end{lem}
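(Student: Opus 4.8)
The plan is to read everything off cyclic orders on $X=S^1$ and to split the statement into two parts: (i) each of $p,q$ lies in a single open arc of $X\setminus d$, and (ii) these two arcs are distinct. Write $a=(x,y)$, $b=(\beta_1,\beta_2)$, $c=(z,u)$, $d=(v,w)$. The hypothesis that $b$ is strictly between $a$ and $c$ fixes the cyclic order $\beta_1,x,y,\beta_2,z,u$ after relabelling. Since $a,c\in h_d$, the event $d$ is a common perpendicular to $a$ and $c$, so by Lemma~\ref{lem:harm_separate} it separates each of them; this forces $v\in(x,y)$ and $w\in(z,u)$, whence $v,w$ lie on opposite sides of $b$ and the two arcs of $X\setminus d$ contain $x$ and $z$ respectively (and $\beta_1,\beta_2$ respectively).

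For (i): as $p$ is a common perpendicular to $a,b$ we have $p\in h_a$, and by duality~(h5) the relation $a\in h_d$ gives $d\in h_a$; thus $p,d\in h_a$. They are distinct, since $p=d$ would force $b\in h_d$, against the hypothesis $b\in\ay\setminus h_d$. By Axiom~(h3) together with Proposition~\ref{pro:timelike_lines}(b), distinct events on a common timelike line are in the \emph{strong} causal relation, so $p$ neither separates $d$ nor shares an end with it; that is, $p$ lies in one open arc of $X\setminus d$. The identical argument on $h_c$ (using $q\in h_c$ and $d\in h_c$) places $q$ in one open arc. It remains to prove (ii).

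I would prove (ii) by a deformation argument, holding $a,c$ (hence $d$) fixed and letting $b$ range over the events strictly between $a$ and $c$ with $b\notin h_d$. For each such $b$ the feet $p(b)=h_a\cap h_b$ and $q(b)=h_b\cap h_c$ exist (Corollary~\ref{cor:common_perpendicular_prescribed}) and are unique (Lemma~\ref{lem:unique_common_perpendicular}); by continuity of the reflections $\rho_\bullet$ they vary continuously, and by part~(i) each always lies in a single arc of $X\setminus d$. Consider the invariant $\phi(b)\in\{\text{same},\text{different}\}$ recording whether $p(b)$ and $q(b)$ lie in the same or in different arcs. The only way a foot can pass from one arc of $d$ to the other is through $d$ itself (it can never share just one end with $d$, being in the strong causal relation with $d$), and $p(b)=d\iff b\in h_d\iff q(b)=d$. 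Hence $p$ and $q$ can cross $d$ only simultaneously, namely exactly when $b$ meets $h_d$; crossing $h_d$ therefore flips the arcs of both feet at once and leaves $\phi$ unchanged. Thus $\phi$ is the same for every admissible $b$, and it suffices to evaluate it for one convenient position of $b$.

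The main obstacle is exactly this single evaluation, i.e.\ determining the \emph{direction} from which a foot approaches $d$. I would take the limit $b\to c$: then $p(b)\to h_a\cap h_c=d$ along $h_a$, while $q(b)=h_b\cap h_c$ runs off to an end of $h_c$ and so sits in a definite arc of $d$. Parametrising $h_a$ by $(x,y)$ through $s\mapsto(s,\rho_a(s))$ (Proposition~\ref{pro:timelike_lines}(a)), the arc containing $p(b)$ is decided by whether the coincidence point $p_1$ of $\rho_a$ and $\rho_b$ lies in $(x,v)$ or in $(v,y)$; here Lemma~\ref{lem:order_harmonic}, which compares $\rho_b(s)$ with $\rho_a(s)$ against the fixed point $w=\rho_a(v)$, is meant to pin the approach to the arc containing $x$, while $q$ limits into the arc containing $z$. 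As these two arcs are distinct by the configuration of part~(i), $\phi=\text{different}$, i.e.\ $d$ is strictly between $p$ and $q$. Handling the degenerate limit $b\to c$ cleanly, and extracting the precise comparison from Lemma~\ref{lem:order_harmonic}, is where the real work lies.
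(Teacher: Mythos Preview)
Your setup in part~(i) is fine and agrees with the paper. The substantive difference is in part~(ii): you propose a deformation/continuity argument in the variable $b$, whereas the paper works with a single fixed $b$ and an auxiliary event.

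Your deformation argument has two soft spots. First, the domain $\{b\text{ strictly between }a,c\}\setminus h_d$ is disconnected, so you need the crossing analysis at $h_d$ to carry $\phi$ between components. Saying ``both feet cross $d$ at once, so $\phi$ is unchanged'' is not quite enough: you must rule out that one foot merely touches $d$ and bounces back while the other genuinely crosses, i.e.\ you need transversality of $p(b)$ and $q(b)$ at $d$ along $h_a$ and $h_c$, which is not immediate from the axioms. Second, and as you acknowledge, the evaluation step is where the actual content lies, and the limit $b\to c$ lands outside the domain (with $q(b)$ escaping to infinity along $h_c$), so you still owe a clean comparison near that boundary. Invoking Lemma~\ref{lem:order_harmonic} is the right instinct, but you have not yet extracted the direction statement you need.

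The paper avoids both issues by introducing the auxiliary event $r=(t,w)\in h_b$ with $w\in d$, i.e.\ the unique event on $h_b$ sharing the end $w$ with $d$. One application of Lemma~\ref{lem:order_harmonic} (to $a,b\in h_p$) shows $d<_d r$; a second application (to $b,c\in h_q$) shows $r<_d q$. Combined with $p<_d d$ from part~(i), this yields $p<_d d<_d r<_d q$ directly, with no limits, no deformation, and no crossing analysis. The trick is that $r$ shares a coordinate with $d$, so each invocation of Lemma~\ref{lem:order_harmonic} reduces to comparing the \emph{other} coordinate along a single arc.
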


\begin{proof} By the assumption,
$b$
is not on the timelike line
$h_d\sub\ay$.
Hence, the common perpendicular
$p=(p',p'')\in\ay$
to
$a$, $b$
is not equal to
$d$, $p\neq d$,
and the common perpendicular
$q=(q',q'')\in\ay$
to
$b$, $c$
is not equal to
$d$, $q\neq d$.

Since
$p$, $d\in h_a$,
the events
$p$, $d$
are not on a light line, and some closed arc in
$X$
determined by
$d$
does not include
$p$.
We denote that arc by
$d^+\sub X$.
Hence,
$p<_dd$
for the respective partial order
$<_d$.

We also denote by
$b^+\sub X$
the closed arc determined by
$b$
that includes
$c$.
Without loss of generality, we assume that 
$p''$, $w$, $q'\in b^+$,
where
$d=(v,w)$.
Then by Lemma~\ref{lem:order_harmonic}, applied to
$a$, $b\in h_p$ 
and 
$\ga=b^+$
we see that
$v$
lies on the arc 
$\si$
determined by
$(p',t)$
that does not contain
$w$,
where
$r=(t,w)$
is orthogonal to 
$b$.
Therefore,
$d<_dr$
and 
$t\in d^+$.

We denote by
$r_d^+$
the closed arc in
$X$
determined by
$r$
that does not include
$d$,
see sect.~\ref{subsect:causality_structure}. Since
$d$
is also orthogonal to
$c$,
applying again Lemma~\ref{lem:order_harmonic} to
$b$, $c\in h_q$, 
we see that
$t$
lies on the arc 
$\si'$
determined by 
$(v,q'')$
that does not contain
$q'\in d^+$.
Since 
$r$, $q\in h_b$,
it means that 
$q\sub r_d^+$,
thus
$r<_dq$.

Therefore,
$p<_dd<_dr<_dq$.
Since by construction,
$p$, $r$, $q\in h_b$,
and 
$p$, $d$
are not on a light line, we see that 
$d$
is strictly between
$p$
and
$q$.  
\end{proof}

\begin{cor}\label{cor:common_perp_separation} Assume
$a$, $c\in h_d$
as in Lemma~\ref{lem:common_perp_separation}. Given
$p$, $q\in\ay$
with
$p\perp a$, $q\perp c$
such that
$d$
is strictly between
$p$ 
and
$q$,
the common perpendicular to
$p$, $q$
is strictly between
$a$
and
$c$. 
\end{cor}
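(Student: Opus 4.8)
The plan is to deduce the corollary from Lemma~\ref{lem:common_perp_separation} by exploiting the self-dual nature of the configuration. First I would record the structural facts. Since $d$ is strictly between $p$ and $q$, these two events are in the strong causal relation, so by Corollary~\ref{cor:common_perpendicular_prescribed} and Lemma~\ref{lem:unique_common_perpendicular} there is a unique common perpendicular $b$ to $p$, $q$; thus $p,q\in h_b$ and ``the common perpendicular to $p$, $q$'' in the statement is well defined. Next, the hypotheses $p\perp a$ and $a\perp d$ mean precisely $p,d\in h_a$, so $a$ is a common perpendicular to $p$ and $d$; by the uniqueness in Lemma~\ref{lem:unique_common_perpendicular} it is the only one. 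Symmetrically, $c\perp q$ and $c\perp d$ give $d,q\in h_c$, so $c$ is the unique common perpendicular to $d$ and $q$. I also note $a\neq c$, since $a$, $c$ are the two distinct events on $h_d$ from the setup of Lemma~\ref{lem:common_perp_separation}.

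The one genuinely nonformal step is to verify that $d\notin h_b$, which is exactly the hypothesis needed to feed $d$ into Lemma~\ref{lem:common_perp_separation} as the event lying off the relevant timelike line. I would argue by contradiction: if $d\in h_b$, then together with $p\in h_b$ we would have $p,d\in h_a\cap h_b$ (recall $p,d\in h_a$), and since $p\neq d$ (as $d$ is strictly between $p$ and $q$), Proposition~\ref{pro:timelike_lines}(c) forces $h_a=h_b$, hence $a=b$. But then $a=b\perp q$, so $a$ is a common perpendicular to $d$ and $q$; by the uniqueness in Lemma~\ref{lem:unique_common_perpendicular} this yields $a=c$, contradicting $a\neq c$. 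Hence $d\notin h_b$.

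With these facts in hand the corollary follows by a relabeling. I would apply Lemma~\ref{lem:common_perp_separation} to the events $p,q\in h_b$ (in the roles of ``$a$'', ``$c$'', ``$d$'') with $d\in\ay\sm h_b$ strictly between $p$ and $q$ (in the role of ``$b$''). The lemma produces the common perpendiculars to the pairs $(p,d)$ and $(d,q)$ and asserts that $b$ lies strictly between them; but these common perpendiculars are exactly $a$ and $c$ by the identification established in the first paragraph. Therefore $b$ is strictly between $a$ and $c$, as claimed. The only point requiring care is the exclusion $d\notin h_b$; everything else is bookkeeping built on the symmetry of the orthogonality relation and the uniqueness of common perpendiculars.
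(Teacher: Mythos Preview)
Your proof is correct and follows essentially the same route as the paper: establish the common perpendicular $b$ to $p,q$, identify $a$ and $c$ as the common perpendiculars to $(p,d)$ and $(d,q)$ respectively, and then apply Lemma~\ref{lem:common_perp_separation} with the roles $(a,b,c,d)\mapsto(p,d,q,b)$. The paper's version is terser and does not explicitly verify the hypothesis $d\notin h_b$ needed to invoke the lemma; your argument for this exclusion (via Axiom~(h6)/Proposition~\ref{pro:timelike_lines}(c) and the uniqueness of common perpendiculars forcing $a=c$) fills that gap cleanly.
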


\begin{proof} Since
$d$
is strictly between
$p$
and  
$q$,
the events
$p$, $q$
are in the strong causal relation. Thus their common perpendicular
$b\in\ay$
exists and is uniquely determined by Corollary~\ref{cor:common_perpendicular_prescribed}
and Lemma~\ref{lem:unique_common_perpendicular}. Since
$a$
is the common perpendicular to
$d$, $p$,
and
$c$
is the common perpendicular to
$d$, $q$,
Lemma~\ref{lem:common_perp_separation} implies that 
$b$
is strictly between
$a$
and
$c$.
\end{proof}

\begin{proof}[Proof of Proposition~\ref{pro:lqi_ti}] Assume
$a<b<c$
for events in
$\ay$.
If
$t(a,c)=0$
then by Axiom~(t2),
$a$, $c$
are on a light line,
$a,c\in p_x$
for some
$x\in X$.
Then
$b\in p_x$,
and we have 
$t(a,b)=t(b,c)=t(a,c)=0$.

Therefore, we can assume that
$t(a,c)>0$
and hence
$a,c\in h_d$
for some timelike line 
$h_d\sub\ay$.
Using Theorem~\ref{thm:wti}, we also can assume that 
$b$
lies on a light line neither with
$a$
nor with
$b$.
If
$b$
is also on
$h_d$,
then by Axiom~(t4a),
$t(a,b)+t(b,c)=t(a,c)$.
To complete the proof, we show that the assumption
$b\not\in h_d$
implies the strict inequality in the time inequality.
In this case,
$b$
is strictly between
$a$, $c$
by our assumption, and there are 
$p$, $q\in\ay$
with
$a,b\in h_p$, $b,c\in h_q$.
By Lemma~\ref{lem:common_perp_separation},
$d$
is strictly between
$p$
and 
$q$.  

Since
$a\in h_p$
and 
$p$, $d\in D_{ab}$,
(LQI) applied to
$a$, $b$
gives
$F_{ab}(d)>F_{ab}(p)$.
Since
$c\in h_q$
and 
$q$, $d\in D_{bc}$,
(LQI) applied to
$b$, $c$
gives
$F_{bc}(d)>F_{bc}(q)$.
On the other hand,
$F_{ab}(p)=t(a,b)$, $F_{bc}(q)=t(b,c)$,
and it remains to show that 
$F_{ab}(d)+F_{bc}(d)=t(a,c)$.

We fix decomposition
$X=d^+\cup d^-$, $d^+\cap d^-=d$,
induced by
$d$,
and write
$a=(a^+,a^-)$, $b=(b^+,b^-)$, $c=(c^+,c^-)$,
where
$a^\pm,b^\pm,c^\pm\in d^\pm$.
By (t4a), we have
$t(a_d^\pm,b_d^\pm)+t(b_d^\pm,c_d^\pm)=t(a_d^\pm,c_d^\pm)$.
Therefore
$F_{ab}(d)+F_{bc}(d)=\frac{1}{2}(t(a_d^+,c_d^+)+t(a_d^-,c_d^-))=t(a,c)$
because
$a,c\in h_d$. 
\end{proof}

\begin{cor}\label{cor:vp_ti} Variational principle implies the time
inequality, 
(VP)$\Longrightarrow$(TI),
cp. \cite{PY}.
\end{cor}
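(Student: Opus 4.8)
The plan is to chain together the two implications already established: first (VP)$\Rightarrow$(LQI), which I claim is immediate from the definitions, and then (LQI)$\Rightarrow$(TI), which is exactly Proposition~\ref{pro:lqi_ti}.

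First I would verify that (VP) implies (LQI). Fix events $a$, $b$ in the strong causal relation as in the setup for the functional $F_{ab}:D_{ab}\to\R$. By (VP), the infimum of $F_{ab}$ on $D_{ab}$ is attained, and attained at a unique point $d_0\in D_{ab}$, and this minimizer satisfies $a,b\in h_{d_0}$; that is, $d_0$ is the common perpendicular to $a$ and $b$. The key observation is that $d_0$ already belongs to the family $\set{d\in D_{ab}}{$a\in h_d$}$ over which (LQI) quantifies, since in particular $a\in h_{d_0}$. Because $d_0$ is the unique global minimizer, we have $F_{ab}(d)>F_{ab}(d_0)$ for every $d\in D_{ab}\sm d_0$, and a fortiori for every such $d$ carrying the additional condition $a\in h_d$. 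This is precisely the assertion of (LQI).

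Having (LQI) at hand, I would then invoke Proposition~\ref{pro:lqi_ti} to obtain (TI). Composing the two steps gives (VP)$\Rightarrow$(LQI)$\Rightarrow$(TI), which is the statement of the corollary; the comparison with \cite{PY} then situates this against the classical variational approach to the reverse triangle inequality.

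I do not expect a genuine obstacle here: the whole content is the bookkeeping observation that restricting the domain of a strict global minimum preserves strictness. The only point deserving care is the reading of (VP), namely that ``the infimum of $F_{ab}$ is taken at unique $d_0$'' must be understood as saying the infimum is attained and attained \emph{only} at $d_0$, so that the inequality $F_{ab}(d)>F_{ab}(d_0)$ is strict for all other $d\in D_{ab}$. Granting this interpretation, no further geometry of the timed causal space is required beyond what Proposition~\ref{pro:lqi_ti} already provides.
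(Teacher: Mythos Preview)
Your proposal is correct and matches the paper's own approach: the paper states earlier that (VP)$\Rightarrow$(LQI) is obvious (for exactly the bookkeeping reason you spell out) and that (LQI)$\Rightarrow$(TI) is Proposition~\ref{pro:lqi_ti}, so the corollary is the composition of these two steps. No explicit proof is given in the paper beyond this chain.
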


\subsection{Monotone M\"obius structures with (VP)}
\label{subsect:monotone_vp}

Some important properties of M\"obius structures
$\cM$
which do not follow from monotonicity Axiom~(M) can be expressed
as an inequality
$\crr(q)>\crr(q')$
between cross-ratios of 4-tuples
$q$, $q'$
with two common entries,
$|q\cap q'|=2$,
under an assumption that a symmetry between
$q$, $q'$
is broken down in a definite way.

We use notation
$\reg\cP_n$
for the set of ordered nondegenerate
$n$-tuples
of points in
$X=S^1$, $n\in\N$.
For 
$q\in\reg\cP_n$
and a proper subset
$I\sub\{1,\dots,n\}$
we denote by
$q_I\in\reg\cP_k$, $k=n-|I|$,
the 
$k$-tuple
obtained from
$q$
(with the induced order) by crossing out all entries which correspond to elements of
$I$.

We introduce the following Axiom for a M\"obius structure
$M\in\cM$,
which implies the variational principle (VP).

(I) Increment: for any 
$q\in\reg\cP_7$
with cyclic order
$\co(q)=1234567$
such that 
$q_{247}$
and 
$q_{157}$
are harmonic, we have 
$$\crr_1(q_{345})>\crr_1(q_{123}).$$

It means the following. Assume we are given two events
$e=(o,\om)$, $e'=(o',\om')\in\ay$
in the strong causal relation such that 
$(o,\om')$
and 
$(o',\om)$
separate each other. Let
$oo'\sub X$
be the arc between 
$o$, $o'$
that does not contain
$\om$, $\om'$,
and let 
$(u,v)\in\ay$, $u\in oo'$,
be the common perpendicular to
$a=(o,o')$, $b=(\om,\om')$,
that is 
$(u,v)\in h_a\cap h_b$.
Given
$x\in oo'$
such that 
$(o,u)$
and 
$(o',x)$
separate each other, we put
$g_+(u,x)=\exp t_e(u_e,x_e)$, $g_-(u,x)=\exp(-t_{e'}(u_{e'},x_{e'}))$,
$\De(u,x)=g_+(u,x)g_-(u,x)$.
Then Axiom~(I) tells that
$\De(u,x)>1$.

Indeed, consider
$q=(o,\om,v,\om',o',u,x)\in\reg\cP_7$
written in the cyclic order
$\co (q)=1234567$.
The assumption that 4-tuples
$q_{247}$
and 
$q_{157}$
are harmonic means that 4-tuples
$(u,o,v,o')$
and 
$(u,\om,v,\om')$
are harmonic with the common axis
$(u,v)$,
i.e.,
$(u,v)\in h_a\cap h_b$.
Since
$q_{345}=(o,\om,u,x)$, $q_{123}=(\om',o',u,x)$,
we have 
$g_+(u,x)=\crr_1(q_{345})$, $g_-(u,x)=1/\crr_1(q_{123})$.
Thus the condition 
$\crr_1(q_{345})>\crr_1(q_{123})$
means that 
$\De(u,x)>1$.

\begin{pro}\label{pro:canonical_vp} The canonical M\"obius structure
$M_0$
on
$X$
satisfies Axiom~(I).
\end{pro}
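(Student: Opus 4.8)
The plan is to reduce Axiom~(I) for $M_0$ to the single cross-ratio inequality $\crr_1(q_{345})>\crr_1(q_{123})$, with $q_{345}=(o,\om,u,x)$ and $q_{123}=(\om',o',u,x)$, and then verify it by an explicit computation in a normalized chord-metric representative of $M_0$. First I would pin down the geometric content of the hypotheses: since for $M_0$ harmonicity of a $4$-tuple is orthogonality of the corresponding geodesics in $\hyp^2$, the assumption that $q_{247}$ and $q_{157}$ are harmonic says precisely that the event $(u,v)$ is the common perpendicular to the geodesics $a=(o,o')$ and $b=(\om,\om')$. Because $M_0$-automorphisms act as $\PSL_2(\R)$ on $S^1$ and preserve all cross-ratios, I would move $u,v$ to a pair of antipodal points of the standard circle; this is harmless and lets me use the chord metric $|pp'|\propto\sin\tfrac{\Delta\theta}{2}$ as a representative of $M_0$.

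Next I would exploit the reflection $\rho$ across the geodesic $uv$, which preserves chord lengths and is therefore an $M_0$-automorphism fixing $u$ and $v$. Orthogonality of $uv$ to $a$ and to $b$ forces $\rho(o)=o'$ and $\rho(\om)=\om'$, so in the normalized picture the pairs $o,o'$ and $\om,\om'$ are symmetric about the diameter $uv$. Placing $u$ at angle $0$, the cyclic order $\co(q)=o\,\om\,v\,\om'\,o'\,u\,x$ puts $o,\om,x$ on one half-circle and gives $o=\theta_o$, $\om=\theta_\om$, $x=\theta_x$ with $0<\theta_x<\theta_o<\theta_\om<\pi$, while symmetry yields $o'=-\theta_o$ and $\om'=-\theta_\om$ (the point $v$ drops out).

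Finally I would compute the two cross-ratios with $|pp'|=\sin\tfrac{\Delta\theta}{2}$. Using $\sin\tfrac{\theta_\om-\theta_x}{2}\sin\tfrac{\theta_\om+\theta_x}{2}=\tfrac12(\cos\theta_x-\cos\theta_\om)$ and $\sin^2\tfrac{\theta_o}{2}=\tfrac12(1-\cos\theta_o)$, together with the analogous identities obtained by interchanging $o$ and $\om$, the inequality $\crr_1(q_{345})>\crr_1(q_{123})$ becomes, after clearing denominators, $(1-\cos\theta_o)(\cos\theta_x-\cos\theta_\om)>(1-\cos\theta_\om)(\cos\theta_x-\cos\theta_o)$. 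The difference of the two sides factors as $(\cos\theta_o-\cos\theta_\om)(1-\cos\theta_x)$, which is strictly positive since $\theta_o<\theta_\om$ gives $\cos\theta_o>\cos\theta_\om$ and $\theta_x>0$ gives $\cos\theta_x<1$. This yields $\De(u,x)>1$, i.e.\ Axiom~(I).

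The only delicate part is the bookkeeping in the first two steps: correctly reading off the symmetric angular coordinates from the cyclic order and the two orthogonality conditions, and matching each side of the inequality with the right cross-ratio. Once the coordinates are set up, the remaining trigonometry is entirely routine, and the clean factorization $(\cos\theta_o-\cos\theta_\om)(1-\cos\theta_x)$ is what makes the whole argument transparent.
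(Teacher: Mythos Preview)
Your argument is correct, but it takes a different route from the paper. The paper works in the semi-metric from $M_0$ with infinitely remote point $u$: harmonicity of $q_{247}$ and $q_{157}$ then says $|vo|=|vo'|$ and $|v\om|=|v\om'|$, and since for $M_0$ this metric is the standard one on $\R$, additivity gives $|o\om|=|o'\om'|$. The two cross-ratios become $|x\om|/|ox|$ and $|xo'|/|x\om'|$, and writing $|x\om|=|xo|+|o\om|$, $|xo'|=|x\om'|+|o\om|$, the inequality reduces to $1+\frac{|o\om|}{|xo|}>1+\frac{|o\om|}{|x\om'|}$, which holds because $|xo|<|x\om'|$. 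Your approach instead keeps $u$ finite, normalises $(u,v)$ to antipodal points in the chord metric, and uses the reflection $\rho$ across $uv$ to encode the two harmonicity conditions as angular symmetry; the trigonometric identities then yield the clean factorisation $(\cos\theta_o-\cos\theta_\om)(1-\cos\theta_x)>0$. Both proofs exploit the same underlying symmetry of the configuration, but the paper's choice of $u$ as infinitely remote point linearises the problem immediately and avoids all trigonometry; your chord-metric setup is a bit longer but has the merit of making the reflection symmetry explicit and producing a factored expression that transparently identifies exactly which order relations drive the inequality.
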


\begin{proof} Let
$q=(o,\om,v,\om',o',u,x)\in\reg\cP_7$
be as above. In the metric on
$X$
from
$M_0$
with infinitely remote point
$u$,
we have
$|vo|=|vo'|$, $|v\om|=|v\om'|$.
Since
$M_0$
is canonical,
$|vo|=|v\om|+|o\om|$,
thus
$|o\om|=|o'\om'|$.
Furthermore,
$\crr_1(q_{345})=\crr_1(o,\om,u,x)=|x\om|/|ox|$
and 
$\crr_1(q_{123})=\crr_1(\om',o',u,x)=|xo'|/|x\om'|$.

Note that
$xo\sub x\om'\sub X_u$.
Thus
$|xo|<|x\om'|$.
Using
$|x\om|=|xo|+|o\om|$
and 
$|xo'|=|x\om'|+|o'\om'|=|x\om'|+|o\om|$,
we obtain
$|x\om|/|ox|>|xo'|/|x\om'|$.
Hence, 
$\crr_1(q_{345})>\crr_1(q_{123})$,
and  
$M_0$
satisfies (I). 
\end{proof}

\begin{pro}\label{pro:increment_variational_principle} Increment Axiom~(I)
implies Variational Principle~(VP).
\end{pro}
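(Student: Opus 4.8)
The plan is to show that the functional $F_{ab}$ separates into two one‑variable functions once Axiom~(t5) is applied, and that Axiom~(I) is precisely the inequality forcing each of them to be strictly unimodal with extremum at the common perpendicular.

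First I would fix the configuration. The hypotheses on $a=(o,o')$, $b=(\om,\om')$ force the cyclic order $o,\om,\om',o'$, and $e=(o,\om)$, $e'=(o',\om')$ are in the strong causal relation. By Corollary~\ref{cor:common_perpendicular_prescribed} and Lemma~\ref{lem:unique_common_perpendicular} there is a unique common perpendicular $d_0=(u,v)\in h_a\cap h_b$, with $u$ on the arc $oo'$ and $v$ on the arc $\om\om'$; this is the candidate minimizer, and since $d_0\perp a$, $d_0\perp b$ we have $a,b\in h_{d_0}$ and $F_{ab}(d_0)=t(a,b)$. Writing a general $d=(x,y)\in D_{ab}$ with $x$ on the arc $\om\om'$ and $y$ on the arc $oo'$, Axiom~(t5) gives $t(o_d,\om_d)=t(x_e,y_e)$ and $t(o'_d,\om'_d)=t(x_{e'},y_{e'})$, exactly as in Lemma~\ref{lem:monotone_plus_dist}. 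Introducing the projection coordinate $s_e(p)$ of a boundary point $p$ on the timelike line $h_e$, so that $t(p_e,p'_e)=|s_e(p)-s_e(p')|$ by Axiom~(t4a), and likewise $s_{e'}$, I would check that for $d\in D_{ab}$ the two differences $s_e(x)-s_e(y)$ and $s_{e'}(x)-s_{e'}(y)$ are both positive (routine sign bookkeeping via Corollary~\ref{cor:interval_monotone}). Consequently
\[
F_{ab}(d)=\tfrac12\bigl(\Phi(x)-\Psi(y)\bigr),\qquad
\Phi(x)=s_e(x)+s_{e'}(x),\quad \Psi(y)=s_e(y)+s_{e'}(y),
\]
so the two‑dimensional minimization splits: $F_{ab}$ is minimized precisely where $\Phi$ is minimized over the arc $\om\om'$ and $\Psi$ is maximized over the arc $oo'$.

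Thus the statement reduces to showing that $\Phi$ has a strict minimum at $x=v$ and $\Psi$ a strict maximum at $y=u$. By Lemma~\ref{lem:continuity_semimetrics} and Corollary~\ref{cor:interval_monotone} these functions are continuous and monotone on each half‑arc, with $\Phi\to+\infty$ at both ends of $\om\om'$ and $\Psi\to-\infty$ at both ends of $oo'$, so the extrema are interior. To locate them I would invoke Axiom~(I) in its increment form: for $y$ on the arc $oo'$ satisfying the separation condition, $\De(u,y)=g_+(u,y)g_-(u,y)>1$ unwinds to $t_e(u_e,y_e)>t_{e'}(u_{e'},y_{e'})$, i.e. $|s_e(u)-s_e(y)|>|s_{e'}(u)-s_{e'}(y)|$. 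For $y$ on the side of $u$ towards $o$ the signs turn this into exactly $\Psi(u)>\Psi(y)$; for $y$ on the side towards $o'$ I would apply Axiom~(I) to the relabelled $7$‑tuple obtained by interchanging $o\leftrightarrow o'$ and $\om\leftrightarrow\om'$ (the same common perpendicular $(u,v)$, hypotheses of~(I) still met), which yields the reversed increment and again $\Psi(u)>\Psi(y)$. Hence $\Psi$ has a strict maximum at $u$; the strict minimum of $\Phi$ at $v$ follows in the same manner by applying~(I) to the configurations adapted to the arc $\om\om'$, i.e. interchanging the roles of $a$ and $b$.

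Combining, $F_{ab}(d)=\tfrac12(\Phi(x)-\Psi(y))>\tfrac12(\Phi(v)-\Psi(u))=F_{ab}(d_0)$ for every $d\neq d_0$, so the infimum of $F_{ab}$ is attained uniquely at $d_0$ with $a,b\in h_{d_0}$, which is precisely~(VP). I expect the main obstacle to lie in the two places where signs are decisive: verifying the variable‑separated form of $F_{ab}$ on $D_{ab}$, and matching the multiplicative increment $\De(u,y)>1$ to the correct inequality $\Psi(u)>\Psi(y)$ on each side of $u$ — in particular confirming that each relabelled $7$‑tuple genuinely satisfies the harmonicity hypotheses of Axiom~(I), so that the axiom may legitimately be invoked on both sides.
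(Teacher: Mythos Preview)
Your approach is essentially the paper's: both rewrite $F_{ab}(d)$ via Axiom~(t5) as a sum of times along $h_e$ and $h_{e'}$, then use Axiom~(I) (plus its relabelled version) to show that moving a coordinate of $d$ away from the common perpendicular increases $F_{ab}$. The paper organizes this as an incremental comparison---it shows $F_{ab}(d)>F_{ab}(d')$ for $d'=(u,x')$ and leaves the second step $F_{ab}(d')>F_{ab}(d_0)$ to an implicit symmetry---whereas you package the same computation as a full separation of variables $F_{ab}(d)=\tfrac12(\Phi(x)-\Psi(y))$ and optimize each factor independently. Your flagged sign checks (that both $s_e(x)-s_e(y)$ and $s_{e'}(x)-s_{e'}(y)$ stay positive on $D_{ab}$, and that the relabelled $7$-tuples meet the harmonicity hypotheses of~(I)) are exactly what underlies the paper's ``one of $x_e,x_{e'}$ lies in the segment, the other not'' step, so the two arguments are the same in substance.
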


\begin{proof} Let
$a=(o,o')$, $b=(\om,\om')\in\ay$
be events in the strong causal relation such that the pairs
$(o,\om')$
and 
$(o',\om)$
separate each other. Then the events 
$e=(o,\om)$, $e'=(o',\om')$
are also in the strong causal relation.

Let
$d_0=(u,v)\in D_{ab}$
be the unique event with
$a$, $b\in h_{d_0}$.
We show that
$F_{ab}(d)>F_{ab}(d_0)$
for any 
$d=(x,x')\in D_{ab}$, $d\neq d_0$.
Let 
$oo'\sub X$
be the arc between
$o$, $o'$
that does not include
$b$.
Without loss of generality we can assume that
$u,x\in oo'$
and 
$x\neq u$.
It suffices to show that 
$F_{ab}(d)>F_{ab}(d')$
for 
$d'=(u,x')$.

Let
$\si\sub h_e$
be the segment between
$u_e$, $x_e'\in h_e$, $\si'\sub h_{e'}$
the segment between
$u_{e'}$, $x_{e'}'\in h_{e'}$.
Since 
$x\neq u$,
one of the events
$x_e\in h_e$, $x_{e'}\in h_{e'}$
lies in the respective segment
$\si$, $\si'$,
while the other not. We assume without loss of generality that 
$x_{e'}\in\si'$.
Then 
$x_e\not\in\si$,
and moreover
$u_e$
separates the events
$x_e$
and 
$x_e'$
on the timelike line 
$h_e$.
Thus
$t(x_e,x_e')>t(u_e,x_e')$
while
$t(x_{e'},x_{e'}')<t(u_{e'},x_{e'}')$.
By Axiom~(I),
$t(x_e,u_e)>t(x_{e'},u_{e'})$,
thus
$t(x_e,x_e')-t(u_e,x_e')>t(u_{e'},x_{e'}')-t(x_{e'},x_{e'}')$.

Recall that 
$$F_{ab}(d)=\frac{1}{2}(t_d^+(a,b)+t_d^-(a,b)),$$
where
$t_d^+(a,b)=t(o_d,\om_d)$, $t_d^-(a,b)=t(o_d',\om_d')$. 
By (t5) we have 
$t(o_d,\om_d)=t(x_e,x_e')$, $t(o_d',\om_d')=t(x_{e'},x_{e'}')$.
Hence
$$F_{ab}(d)-F_{ab}(d')=\frac{1}{2}(t(x_e,x_e')-t(u_e,x_e')
                       +t(x_{e'},x_{e'}')-t(u_{e'},x_{e'}'))>0,$$
which completes the proof.
\end{proof}

Using Corollary~\ref{cor:vp_ti}, we immediately obtain

\begin{cor}\label{cor:i_ti} Increment Axiom~(I) implies the time 
inequality, (TI).
\end{cor}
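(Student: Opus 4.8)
The plan is to read this corollary off as a one-step composition of two results already in hand. First I would appeal to Proposition~\ref{pro:increment_variational_principle}, which asserts that the Increment Axiom~(I) implies the Variational Principle~(VP) for a monotone M\"obius structure $M\in\cM$ (equivalently, for the associated timed causal space $T=\wh T(M)\in\cT$). Then I would invoke Corollary~\ref{cor:vp_ti}, which records that (VP) implies the time inequality~(TI). Composing the two, we obtain the chain (I)$\Rightarrow$(VP)$\Rightarrow$(TI), which is exactly the assertion.

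There is essentially no obstacle here: the content of the corollary is purely the transitivity of these two implications, which is why the preceding sentence promises that it follows \emph{immediately}. If one prefers not to treat Corollary~\ref{cor:vp_ti} as a black box, the identical conclusion is reached by expanding it into its two constituent steps from the hierarchy in sect.~\ref{subsect:hierarchy}: the trivial implication (VP)$\Rightarrow$(LQI), followed by Proposition~\ref{pro:lqi_ti}, which supplies (LQI)$\Rightarrow$(TI). Either route delivers (I)$\Rightarrow$(TI) with no additional computation, so the proof reduces to citing Proposition~\ref{pro:increment_variational_principle} and Corollary~\ref{cor:vp_ti} and chaining them.
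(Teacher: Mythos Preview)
Your proposal is correct and matches the paper's own argument exactly: the paper states that the corollary follows immediately by combining Proposition~\ref{pro:increment_variational_principle} with Corollary~\ref{cor:vp_ti}, which is precisely the chain (I)$\Rightarrow$(VP)$\Rightarrow$(TI) you give.
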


\subsection{The fine topology and Axiom (I)}
\label{subsect:fine_topology}

We denote by
$\cI$
the class of monotone M\"obius structures on the circle
which satisfies Axiom~(I). This work does not provide tools,
which allow to answer natural questions like to 
characterize hyperbolic spaces
$Y$
with
$\di Y=S^1$
for which the respective M\"obius structure is in the class
$\cI$.
We only show here that a neighborhood of the canonical M\"obius structure
$M_0$
on
$X=S^1$
in an appropriate topology lies in
$\cI$.

Recall that a M\"obius structure 
$M$
on a set 
$X$
determines the 
$M$-topology
on
$X$
(see sect.~\ref{subsect:semi-metrics_topology})
and hence the induced topology on the set 
$\reg\cP_n(X)\sub X^n$. 
One can consider a M\"obius structure as a map defined on
$\reg\cP_4$
with values in a vector space (see sect.~\ref{subsect:moeb_sub-moeb}).
Thus it not clear how to define a topology on a set of M\"obius 
structures on
$X$
because the topology of
$X$
may change together with change of a M\"obius structure.

However, for monotone M\"obius structures on
$X=S^1$
such a problem does not exist in view of Axiom~(T): all M\"obius
structures
$M\in\cM$
induce on
$X$
one and the same topology of the circle. We define a fine topology on
$\cM$
as follows.

Let
$\reg^+\cP_7\sub X^7$
be the subset of
$\reg\cP_7$
which consists of all 
$q\in\reg\cP_7$
with the cyclic order. That is, for 
$q\in\reg^+\cP_7$
we have
$\co(q)=q$.
We take on
$\reg^+\cP_7$
the topology induced from the standard topology of the 7-torus
$X^7$.
We associate with a M\"obius structure
$M\in\cM$
a section of the trivial bundle
$\reg^+\cP_7\times\R^4\to\reg^+\cP_7$
given by
$$M(q)=(q,\crr_2(q_{247}),\crr_2(q_{157}),\crr_1(q_{345}),\crr_1(q_{123}))$$
for 
$q=1234567\in\reg^+\cP_7$.
Taking the product topology on
$\reg^+\cP_7\times\R^4$,
we define the {\em fine} topology on
$\cM$
with base given by sets
$$U_V=\set{M\in\cM}{$M(\reg^+\cP_7)\sub V$},$$
where
$V$
runs over open subsets of
$\reg^+\cP_7\times\R^4$.

We show that the canonical M\"obius structure
$M_0$
on
$X$
possesses a neighborhood
$U_V$
in the fine topology which lies in
$\cI$,
that is, every M\"obius structure
$M\in U_V$
satisfies Axiom~(I). To this end, consider a function
$\ep:\reg^+\cP_7\to\R$
given by
$$\ep(q)=\frac{|o\om|_0^2}{4|x\om'|_0^2}$$
for 
$q=(o,\om,v,\om',o',u,x)\in\reg^+\cP_7$,
where
$|\cdot\,\cdot\,|_0$
is a standard metric on
$X_u=\R$
from the canonical M\"obius structure
$M_0$
with infinitely remote point
$u$.
Such a metric is determined up to a homothety, but clearly
$\ep$
does not depend on that.

\begin{lem}\label{lem:ep_continuous} The function
$\ep:\reg^+\cP_7\to\R$
is continuous.
\end{lem}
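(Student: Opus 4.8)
The plan is to reduce $\ep$ to a ratio of distances measured in one \emph{fixed} representative of $M_0$, after which continuity becomes a routine consequence of Lemma~\ref{lem:continuity_semimetrics} together with Axiom~(T). The only genuine subtlety is that the semi-metric $|\cdot\,\cdot\,|_0$ used in the definition of $\ep$ has its infinitely remote point at $u$, and $u$ is itself one of the coordinates of $q$; thus the very semi-metric used to measure $|o\om|_0$ and $|x\om'|_0$ changes as $q$ moves. I resolve this by trading the moving infinitely remote point $u$ for a single background point that stays clear of all coordinates near a given $q$.

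More precisely, I fix $q_0\in\reg^+\cP_7$. Since the seven entries of $q_0$ are finitely many distinct points of the circle, I choose a point $\om_0\in X$ distinct from all of them and let $d=d_{\om_0}\in M_0$ be a representative with infinitely remote point $\om_0$. For any $u\neq\om_0$ the semi-metric of $M_0$ with infinitely remote point $u$ is, up to a positive homothety, the metric inversion of $d$ at $u$, that is $d_u(a,b)=d(a,b)/(d(a,u)\,d(b,u))$. Since $\ep$ is a ratio of squared distances it is invariant under this homothety, and substituting yields the explicit formula
\begin{equation*}
\ep(q)=\frac{1}{4}\left(\frac{d_u(o,\om)}{d_u(x,\om')}\right)^2
       =\frac{1}{4}\cdot\frac{d(o,\om)^2\,d(x,u)^2\,d(\om',u)^2}
                             {d(x,\om')^2\,d(o,u)^2\,d(\om,u)^2},
\end{equation*}
valid for every $q=(o,\om,v,\om',o',u,x)$ all of whose entries differ from $\om_0$.

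To finish, I work in a neighborhood $U$ of $q_0$ in $\reg^+\cP_7$ small enough that every coordinate of each $q\in U$ stays away from $\om_0$ and the seven coordinates remain pairwise distinct. On $U$ each factor $d(\,\cdot\,,\,\cdot\,)$ above is a value of a distance function $f_x(y)=d(x,y)$, continuous in the $M_0$-topology by Lemma~\ref{lem:continuity_semimetrics}; by Axiom~(T) this is the standard topology of $S^1$, hence these factors are continuous in the coordinates of $q$ in the topology carried by $\reg^+\cP_7$. Each factor is moreover finite and strictly positive on $U$ (its arguments are distinct points both different from the infinitely remote point $\om_0$ of $d$), so the right-hand side is a quotient of products of nonvanishing continuous functions, and is therefore continuous on $U$. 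As $q_0\in\reg^+\cP_7$ was arbitrary, $\ep$ is continuous.

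The step I expect to require the most care is the first one: correctly identifying the semi-metric with infinitely remote point $u$ as the inversion of the fixed background metric $d$ and checking that the homothety ambiguity cancels, so that the moving infinitely remote point is eliminated. Once the displayed product formula is in hand, continuity is immediate.
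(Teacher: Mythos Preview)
Your proof is correct and rests on the same device as the paper's: the metric-inversion formula, used to neutralise the fact that the infinitely remote point of the semi-metric defining $\ep$ is the moving coordinate $u$. The executions differ. The paper keeps $d_u$ itself as the reference, declares continuity in the six coordinates other than $u$ to be obvious, and then checks only the $u$-direction by computing $\ep(q')=\ep(q)\cdot\dfrac{d_u^2(u',\om')\,d_u^2(u',x)}{d_u^2(u',o)\,d_u^2(u',\om)}$ and observing that the correction factor tends to $1$ as $u'\to u$. You instead pick an auxiliary infinitely remote point $\om_0$ away from all entries of $q_0$, rewrite $\ep$ once and for all as an explicit rational function of distances in the fixed metric $d_{\om_0}$, and then treat all seven coordinates uniformly. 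Your version is more systematic and leaves nothing implicit; the paper's is shorter because it isolates the single coordinate that genuinely requires an argument.

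One small remark: Lemma~\ref{lem:continuity_semimetrics} as stated yields only continuity of $y\mapsto d(x,y)$ for each fixed $x$, i.e.\ separate continuity, whereas your last paragraph needs joint continuity of $d$ on $X_{\om_0}\times X_{\om_0}$. This is not a real gap here, since any representative of the canonical $M_0$ with an infinitely remote point is the standard Euclidean metric on $\R$, which is evidently jointly continuous; but the lemma you cite does not literally say this.
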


\begin{proof} Obviously, it suffices to check that 
$\ep$
varies continuously in the variable 
$u\in q$.
We switch to the notation
$d_u(x,y)=|xy|_0$
for a metric from
$M_0$
with infinitely remote point
$u$.
Applying to
$u'\in X$, $u'\neq u$,
a metric inversion, we have
$$d_{u'}(x,y)=\frac{d_u(x,y)}{d_u(u',x)d_u(u',y)}.$$
The point
$u'\in X$
is infinitely remote for 
$d_{u'}$.
Thus for 
$q'=(o,\om,v,\om',o',u',x)$, $q=(o,\om,v,\om',o',u,x)$
we obtain
$$\ep(q')=\frac{d_{u'}^2(o,\om)}{4d_{u'}^2(x,\om')}
         =\ep(q)\frac{d_u^2(u',\om')d_u^2(u',x)}{d_u^2(u',o)d_u^2(u',\om)}.$$
The factor after
$\ep(q)$
in the right hand side tends to 1 as
$u'\to u$.
Thus
$\ep(q')\to\ep(q)$
as
$u'\to u$,
that is, as
$q'\to q$. 
\end{proof}

The set 
$$V=\set{(q,r)\in\reg^+\cP_7\times\R^4}{$|r-\pr_2\circ M_0(q)|<\ep(q)$},$$
where
$\pr_2:\reg^+\cP_7\times\R^4\to\R^4$
is the projection to the second factor, is the 
$\ep$-neighborhood
of
$M_0(\reg^+\cP_7)$
with variable
$\ep=\ep(q)$
in
$\reg^+\cP_7\times\R^4$.
It follows from Lemma~\ref{lem:ep_continuous} that 
$V$
is open in
$\reg^+\cP_7\times\R^4$.
Thus the set 
$$U_V=\set{M\in\cM}{$M(\reg^+\cP_7)\sub V$}$$
of M\"obius structures is open in the fine topology.
The following is a pertubed version of Proposition~\ref{pro:canonical_vp}.

\begin{pro}\label{pro:pertubed_canonical_vp} Every M\"obius structure
$M\in U_V$
satisfies Increment Axiom~(I), that is,
$U_V\sub\cI$.
\end{pro}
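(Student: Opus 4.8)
The plan is to fix a single $7$-tuple and reduce Axiom~(I) to an elementary inequality between canonical cross-ratios, letting the weight $\ep(q)$ absorb the perturbation. So take $q=(o,\om,v,\om',o',u,x)\in\reg^+\cP_7$ with $\co(q)=1234567$ and suppose $q_{247}$, $q_{157}$ are $M$-harmonic, i.e. $\crr_2(q_{247})=1=\crr_2(q_{157})$ for $M$. First I would pass to the metric of $M_0$ on $X_u\cong\R$ with $u$ infinitely remote and coordinatize so that $0=x<o<\om<v<\om'<o'$ (the reverse orientation gives the same cross-ratios). A direct computation, exactly as in the proof of Proposition~\ref{pro:canonical_vp}, yields
\[
\crr_1(q_{345})_{M_0}=\frac{\om}{o},\quad \crr_1(q_{123})_{M_0}=\frac{o'}{\om'},\quad
\crr_2(q_{247})_{M_0}=\frac{o'-v}{v-o},\quad \crr_2(q_{157})_{M_0}=\frac{\om'-v}{v-\om}.
\]
Membership $M\in U_V$ means precisely that each of these four coordinates of $M$ differs from its $M_0$-value by less than $\ep(q)=\frac{(\om-o)^2}{4\om'^2}$.

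Since the two harmonicity coordinates of $M$ equal $1$, closeness gives $|\crr_2(q_{247})_{M_0}-1|<\ep$ and $|\crr_2(q_{157})_{M_0}-1|<\ep$. Writing $s=\crr_2(q_{247})_{M_0}-1$ and $t=\crr_2(q_{157})_{M_0}-1$, the two identities above rearrange to $o'+o-2v=s(v-o)$ and $\om'+\om-2v=t(v-\om)$; subtracting eliminates $v$ and yields
\[
o'=(\om+\om'-o)+E,\qquad E=s(v-o)-t(v-\om),\quad |E|<2\ep\,\om',
\]
the bound using $|s|,|t|<\ep$ and $v-o,\,v-\om<\om'$. This is the key quantitative step: near-harmonicity of the axis $(u,v)$ pins the free coordinate $o'$ to its canonical position $\om+\om'-o$ up to a controlled error. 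Substituting into the clean gap gives
\[
\frac{\om}{o}-\frac{o'}{\om'}=(\om-o)\,\frac{\om'-o}{o\,\om'}-\frac{E}{\om'}>(\om-o)\,\frac{\om'-o}{o\,\om'}-2\ep.
\]

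Finally I would combine this with the closeness of the two $\crr_1$-coordinates, namely $\crr_1(q_{345})_M>\frac{\om}{o}-\ep$ and $\crr_1(q_{123})_M<\frac{o'}{\om'}+\ep$, to obtain
\[
\crr_1(q_{345})_M-\crr_1(q_{123})_M>(\om-o)\,\frac{\om'-o}{o\,\om'}-4\ep.
\]
It remains to check that the canonical gap dominates $4\ep=\frac{(\om-o)^2}{\om'^2}$, i.e. $(\om-o)\frac{\om'-o}{o\om'}>\frac{(\om-o)^2}{\om'^2}$; dividing by $\om-o>0$ and clearing denominators this is $\om'(\om'-o)>o(\om-o)$, equivalently $\om'^2+o^2>o(\om+\om')$, which holds since $o(\om+\om')<2o\om'<\om'^2+o^2$ (using $\om<\om'$ and $o<\om'$). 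Hence $\crr_1(q_{345})>\crr_1(q_{123})$ for $M$, which is Axiom~(I). The only real difficulty is the bookkeeping: one must verify that $\ep(q)$ is calibrated so that the total error $4\ep$ (two from fixing $o'$, two from the $\crr_1$ perturbation) stays strictly below the canonical gap, and this is exactly what the elementary inequality $\om'(\om'-o)>o(\om-o)$ guarantees.
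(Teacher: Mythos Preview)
Your proof is correct and follows essentially the same route as the paper's. Both arguments work in the $M_0$-metric with $u$ infinitely remote, use the two near-harmonicity conditions to control the deviation of $|o\om|_0$ from $|\om'o'|_0$ (your quantity $E$ is exactly minus the paper's $|o\om|_0-|\om'o'|_0$), lose two further $\ep$'s from the $\crr_1$ perturbations, and finish by checking that the canonical gap $\frac{|o\om|_0\cdot|o\om'|_0}{|xo|_0\cdot|x\om'|_0}$ strictly exceeds $4\ep$. Your choice of coordinates $x=0$ is just a normalization; the resulting inequality $\om'(\om'-o)>o(\om-o)$ is precisely the paper's $|o\om'|_0\cdot|x\om'|_0>|o\om|_0\cdot|xo|_0$.
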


\begin{proof} Given
$M\in U_V$,
for any
$q\in\reg^+\cP_7$, $q=1234567$,
such that 4-tuples
$q_{247}$, $q_{157}$
are 
$M$-harmonic,
that is,
$\crr_2(q_{247})=1=\crr_2(q_{157})$,
we have to show that
$\crr_1(q_{345})>\crr_1(q_{123})$
for 
$M$-cross-ratios.

We assume that
$q=(o,\om,v,\om',o',u,x)$,
and for (semi-)metrics
$d_u\in M$, $d_u^0\in M_0$
with infinitely remote point
$u$
we use notations
$d_u(a,b)=|ab|$, $d_u^0(a,b)=|ab|_0$.
The assumption
$M\in U_V$
implies
$|\crr_2^0(q_{247})-1|<\ep$, $|\crr_2^0(q_{157})-1|<\ep$
for 
$M_0$-cross-ratios,
where
$\ep=\ep(q)$.
Since
$q_{247}=(o,v,o',u)$, $q_{157}=(\om,v,\om',u)$,
we have
$1=\crr_2(q_{247})=\frac{|vo'|\cdot|ou|}{|ov|\cdot|o'u|}=\frac{|vo'|}{|ov|}$,
$1=\crr_2(q_{157})=\frac{|v\om'|\cdot|\om u|}{|\om v|\cdot|\om'u|}=\frac{|v\om'|}{|\om v|}$.
Hence,
\begin{equation}\label{eq:ep_harm_canon}
\left|\frac{|vo'|_0}{|ov|_0}-1\right|<\ep,\quad \left|\frac{|v\om'|_0}{|\om v|_0}-1\right|<\ep. 
\end{equation}
Using that
$|o\om|_0=|ov|_0-|\om v|_0$, $|\om'o'|_0=|vo'|_0-|v\om'|_0$,
because
$M_0$
is canonical, we have
$$|o\om|_0-|\om'o'|_0=|ov|_0-|vo'|_0+|v\om'|_0-|\om v|_0$$
and thus using (\ref{eq:ep_harm_canon}) we obtain
\begin{equation}\label{eq:ep_oom}
-\ep(|ov|_0+|\om v|_0)\le|o\om|_0-|\om'o'|_0\le\ep(|ov|_0+|\om v|_0).
\end{equation}

Similarly, since
$|x\om|_0=|xo|_0+|o\om|_0$, $|xo'|_0=|x\om'|_0+|\om'o'|_0$,
we have
$$\crr_1^0(q_{345})-\crr_1^0(q_{123})=\frac{|x\om|_0}{|xo|_0}-\frac{|xo'|_0}{|x\om'|_0}
  =\frac{|o\om|_0}{|xo|_0}-\frac{|\om'o'|_0}{|x\om'|_0}.$$
Using (\ref{eq:ep_oom}) and that 
$|x\om'|_0-|xo|_0=|o\om'|_0$
we obtain
\begin{equation}\label{eq:diff_cr1_below_0}
\crr_1^0(q_{345})-\crr_1^0(q_{123})\ge\frac{|o\om|_0\cdot|o\om'|_0}{|xo|_0\cdot|x\om'|_0}
  -\ep\frac{|ov|_0+|\om v|_0}{|x\om'|_0}.
\end{equation}
By the assumption
$M\in U_V$,
we have
$|\crr_1(p)-\crr_1^0(p)|<\ep$
for 
$p=q_{345}$
and
$p=q_{123}$.
Hence
$\crr_1(q_{345})-\crr_1(q_{123})\ge\crr_1^0(q_{345})-\crr_1^0(q_{123})-2\ep$.
Thus using (\ref{eq:diff_cr1_below_0}) we obtain
\begin{equation}\label{eq:diff_cr1_below}
\crr_1(q_{345})-\crr_1(q_{123})\ge\frac{|o\om|_0\cdot|o\om'|_0}{|xo|_0\cdot|x\om'|_0}
  -\ep\left(2+\frac{|ov|_0+|\om v|_0}{|x\om'|_0}\right). 
\end{equation}
We have
$o\om\sub o\om'$, $xo\sub x\om'$, $\om v\sub ov\sub x\om'$
in
$X_u$.
Thus
$|o\om|_0<|o\om'|_0$, $|xo|_0<|x\om'|_0$, $|\om v|_0<|ov|_0<|x\om'|_0$,
and hence
$$\frac{|o\om|_0\cdot|o\om'|_0}{|xo|_0\cdot|x\om'|_0}>\frac{|o\om|_0^2}{|x\om'|_0^2},\quad 
  \frac{|ov|_0+|\om v|_0}{|x\om'|_0}<\frac{2|ov|_0}{|x\om'|_0}<2.$$
Therefore
$\crr_1(q_{345})-\crr_1(q_{123})>\frac{|o\om|_0^2}{|x\om'|_0^2}-4\ep=0$. 
\end{proof}

\subsection{Convex M\"obius structures}
\label{subsect:convex_moeb}

We introduce the following Axiom for a M\"obius structure
$M\in\cM$,
which implies convexity of the function
$F_{ab}$.

(C) Convexity: for any 
$q\in\reg\cP_6$
with cyclic order
$\co(q)=123456$
such that 
$\crr_3(q_{46})=\crr_3(q_{26})$
we have 
$$\crr_1(q_{12})>\crr_1(q_{14}).$$
A M\"obius structure
$M\in\cM$
is {\em convex}, if it satisfies Axiom~(C).

Axiom~(C) can be rewritten in the following way. Assume we have
$q=(o',x,y,z,o,\om)\in\reg\cP_6$
written in the cyclic order,
$\co(q)=123456$.
Then
$q_{46}=(o',x,y,o)$, $q_{26}=(o',y,z,o)$,
and the assumption
$\crr_3(q_{46})=\crr_3(q_{26})$
is equivalent to
$\de_{x,y,z}(o)=\de_{x,y,z}(o')$,
where 
$$\de_{x,y,z}(o)=\frac{|yo|^2}{|xo|\cdot|zo|}.$$
Further, we have 
$q_{12}=(y,z,o,\om)$, $q_{14}=(x,y,o,\om)$.
Thus the condition
$\crr_1(q_{12})>\crr_1(q_{14})$
is equivalent to
$\de_{x,y,z}(o)>\de_{x,y,z}(\om)$.

\begin{pro}\label{pro:canonical_convex} The canonical M\"obius structure
$M_0$
on
$X$
is convex.
\end{pro}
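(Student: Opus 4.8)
The plan is to reduce Axiom~(C) for $M_0$ to an elementary inequality for a single rational function of one real variable. Using the reformulation recorded just before the statement, I would first translate the hypothesis $\crr_3(q_{46})=\crr_3(q_{26})$ into $\de_{x,y,z}(o)=\de_{x,y,z}(o')$ and the desired conclusion $\crr_1(q_{12})>\crr_1(q_{14})$ into $\de_{x,y,z}(o)>\de_{x,y,z}(\om)$, where $\de_{x,y,z}(w)=|yw|^2/(|xw|\cdot|zw|)$. Because the hypothesis and conclusion are phrased through cross-ratios, which by definition take the same value on every semi-metric of $M_0$, I am free to compute in any convenient representative; I would take the one with infinitely remote point $w_0$ placed on the arc between $\om$ and $o'$. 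Since $M_0$ is canonical, this representative is (up to a positive factor, which cancels in $\de_{x,y,z}$) the standard Euclidean metric on $X_{w_0}=\R$, and the cyclic order $\co(q)=123456$ becomes the linear order $o'<x<y<z<o<\om$. In these coordinates $\de_{x,y,z}(w)=f(w):=\dfrac{(w-y)^2}{(w-x)(w-z)}$ for every $w$ outside the interval $[x,z]$, in particular for $w\in\{o',o,\om\}$.

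Writing $c:=f(o)=f(o')$, the proposition then splits into two clean pieces, of which the first is the real point: to show $c>1$. For this I would examine the linear function $h(w)=(w-y)^2-(w-x)(w-z)=(x+z-2y)w+(y^2-xz)$, which satisfies $h(x)=(x-y)^2>0$ and $h(z)=(z-y)^2>0$, together with the identity $f(w)-1=h(w)/[(w-x)(w-z)]$, whose denominator is positive on each of the two outer rays $(-\infty,x)$ and $(z,\infty)$. A linear function positive at both $x$ and $z$ is positive on at least one of these rays, depending only on the sign of its slope $x+z-2y$: if the slope is $\ge 0$ it stays positive on $[z,\infty)$, giving $f(o)>1$; if the slope is $\le 0$ it stays positive on $(-\infty,x]$, giving $f(o')>1$; and if the slope vanishes then $h\equiv (x-z)^2/4>0$ and both hold. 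In every case at least one of $f(o),f(o')$ exceeds $1$, and since they are equal to $c$ we obtain $c>1$. This is exactly where the hypothesis $\de_{x,y,z}(o)=\de_{x,y,z}(o')$ is genuinely used, since when $x+z-2y<0$ the value $f(o)$ alone need not exceed $1$ and one must pass through $f(o')$.

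The second piece is then immediate. The equation $f(w)=c$ is equivalent to $P_c(w):=(w-y)^2-c(w-x)(w-z)=0$, a quadratic whose leading coefficient $1-c$ is negative (as $c>1$) and whose two roots are precisely $o$ and $o'$; hence $P_c$ is a downward parabola, negative outside $[o',o]$. Since $\om>o$ we get $P_c(\om)<0$, while $P_c(\om)=(\om-x)(\om-z)\,(f(\om)-c)$ with $(\om-x)(\om-z)>0$, so $f(\om)<c=f(o)$, i.e. $\de_{x,y,z}(\om)<\de_{x,y,z}(o)$, which is the asserted inequality $\crr_1(q_{12})>\crr_1(q_{14})$. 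I expect the only delicate points to be bookkeeping: checking that the chosen representative really turns $\de_{x,y,z}$ into $f$ with the stated ordering of the six points (in particular that the signs work out for $w=o'<x$), and tracking which outer ray carries the positivity in the borderline case $x+z-2y<0$; everything else is routine algebra.
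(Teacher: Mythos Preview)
Your argument is correct. It differs from the paper's proof in one key choice: the paper sends $o'$ itself to infinity rather than an auxiliary point $w_0$. Since any representative of $M_0$ with an infinitely remote point is the standard metric on $\R$, sending $o'$ to infinity makes $\de_{x,y,z}(o')=1$ hold automatically (as the limit $\lim_{w\to\infty}(w-y)^2/[(w-x)(w-z)]$), so the hypothesis collapses to the single relation $|yo|^2=|xo|\cdot|zo|$; the conclusion $\de_{x,y,z}(\om)<1$ then follows in one line from the strict AM--GM inequality $2\sqrt{|xo|\,|zo|}<|xo|+|zo|$. Your approach, keeping all six points finite, trades this shortcut for the two-step analysis of the rational function $f$: you must first establish $c>1$ (your case split on the sign of $x+z-2y$, which is where the hypothesis $f(o)=f(o')$ genuinely enters), and only then exploit the quadratic $P_c$. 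Both routes are elementary; the paper's is shorter because the choice of chart absorbs half the work, while yours makes the algebraic structure --- in particular the role of the second root $o'$ of $P_c$ --- more transparent and avoids any appeal to limiting conventions at the infinitely remote point.
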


\begin{proof} In the metric from
$M_0$
with infinitely remote point
$o'$,
we have 
$\de_{x,y,z}(o')=1$.
Thus we have 
$\de_{x,y,z}(o)=1$
and hence
$|yo|^2=|xo|\cdot|zo|$.
Let
$\si=|o\om|$.
Using that 
$M_0$
is canonical, we have 
$|y\om|=|yo|+\si$, $|x\om|=|xo|+\si$, $|z\om|=|zo|+\si$.
Therefore,
$$\de_{x,y,z}(\om)=\frac{(|yo|+\si)^2}{(|xo|+\si)(|zo|+\si)}=
\frac{1+\al\si+\be\si^2}{1+\ga\si+\be'\si^2},$$
where
$\al=2/|yo|$, $\be=1/|yo|^2$, $\ga=\frac{|xo|+|zo|}{|xo|\cdot|zo|}$,
$\be'=1/(|xo|\cdot|zo|)$.
Since
$|yo|^2=|xo|\cdot|zo|$,
we have
$\be=\be'$,
and thus the inequality
$\de_{x,y,z}(\om)<1$
is equivalent to
$\sqrt{|xo|/|zo|}+\sqrt{|zo|/|xo|}>2$,
which is always true because
$x\neq z$.
\end{proof}

Let
$a=(o,o')$, $b=(\om,\om')\in\ay$
be events in the strong causal relation such that the pairs
$(o,\om')$
and 
$(o',\om)$
separate each other. Using the parametrization
$x\leftrightarrow x_a$
of the arc 
$oo'$
between
$o$, $o'$
that does not contain
$b$
by the timelike line
$h_a$, $x\in oo'$, $x_a\in h_a$,
and the parametrization
$x'\leftrightarrow x_b'$
of the arc 
$\om\om'$
between
$\om$, $\om'$
that does not contain
$a$
by the timelike line
$h_b$,
we consider the function
$F_{ab}:D_{ab}\to\R$,
see sect.~\ref{subsect:hierarchy}, as a function defined on
$h_a\times h_b$, $F_{ab}:h_a\times h_b\to\R$.

\begin{pro}\label{pro:convexity_axiom_convex} Convexity Axiom~(C)
implies that the function
$F_{ab}:h_a\times h_b\to\R$
is strictly convex for any events
$a$, $b\in\ay$
in the strong causal relation.
\end{pro}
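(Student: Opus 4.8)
The plan is to put time coordinates on $h_a$ and $h_b$, rewrite $F_{ab}$ as a \emph{separable} function of them, and then extract the strict convexity of each summand directly from Axiom~(C).

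First I would coordinatize. For $x$ on the arc $oo'$ (the one not containing $b$) set $s=\ln\frac{|o'x|}{|ox|}$, and for $x'$ on the arc $\om\om'$ set $\tau=\ln\frac{|\om'x'|}{|\om x'|}$; by formula~\eqref{eq:time} these are, up to sign, the time coordinates of $x_a\in h_a$ and $x_b'\in h_b$, so that the affine structure on $h_a\times h_b$ in which convexity is asserted is exactly that of $(s,\tau)\in\R^2$. Next, exactly as in the proof of Proposition~\ref{pro:increment_variational_principle}, Axiom~(t5) gives $t_d^+(a,b)=t(o_d,\om_d)=t(x_e,x_e')$ and $t_d^-(a,b)=t(x_{e'},x_{e'}')$, the times between the projections of $x$ and $x'$ onto $h_e$ and onto $h_{e'}$, where $e=(o,\om)$, $e'=(o',\om')$. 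Writing $P(w)=\ln\frac{|\om w|}{|ow|}$ and $Q(w)=\ln\frac{|\om'w|}{|o'w|}$ and applying the time formula, one obtains
\[
2F_{ab}=|P(x)-P(x')|+|Q(x)-Q(x')|,
\]
in which $P(x),Q(x)$ depend only on $s$ while $P(x'),Q(x')$ depend only on $\tau$.

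I would then resolve the absolute values. The pair $\{o,\om\}$ splits $X$ into two arcs, and along the one running $o\to o'\to\om'\to\om$ the projection to $h_e$ is monotone (Proposition~\ref{pro:timelike_lines}(a)); since $x\in oo'$ and $x'\in\om\om'$ sit near its two ends, $P(x)>P(x')$ throughout $D_{ab}$, and likewise $Q(x)>Q(x')$. Hence both absolute values open with a fixed sign and $2F_{ab}=\Phi(s)-\Psi(\tau)$ is separable, with $\Phi(s)=P(x)+Q(x)$ and $\Psi(\tau)=P(x')+Q(x')$. It therefore remains only to prove that $\Phi$ is strictly convex in $s$ and $\Psi$ strictly concave in $\tau$. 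This is where Axiom~(C) enters, used four times. Taking three positions $x_1,x_2,x_3$ of $x$ with $s_2=\tfrac12(s_1+s_3)$ and unwinding the definition of $\de_{x_1,x_2,x_3}$, the midpoint condition becomes $\de_{x_1,x_2,x_3}(o)=\de_{x_1,x_2,x_3}(o')$, while the conclusion $\de_{x_1,x_2,x_3}(o)>\de_{x_1,x_2,x_3}(\om)$ rewrites as $P(x_2)<\tfrac12(P(x_1)+P(x_3))$; that is, $s\mapsto P$ is strictly midpoint convex. The three analogous invocations, with the appropriate triple of source points among $o,o',\om,\om'$ playing the roles in Axiom~(C), show $s\mapsto Q$ strictly convex and $\tau\mapsto P$, $\tau\mapsto Q$ strictly concave. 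Since $P,Q$ are continuous in the $M$-topology (Lemma~\ref{lem:continuity_semimetrics}), strict midpoint convexity upgrades to strict convexity, so $\Phi$ and $-\Psi$ are strictly convex. A separable sum of two strictly convex functions is jointly strictly convex, whence $F_{ab}=\tfrac12(\Phi(s)-\Psi(\tau))$ is strictly convex on $h_a\times h_b$.

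The main obstacle is the combinatorial bookkeeping in the last step: in each of the four applications one must check that the cyclic order of the six relevant points matches the hypothesis of Axiom~(C) and that the equal-$\de$ premise is \emph{precisely} the midpoint condition in the correct time coordinate. The sign resolution of the second step demands equal care, since without it the absolute values would destroy the separable convex structure on which the whole argument rests.
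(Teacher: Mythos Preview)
Your proof is correct and follows essentially the same route as the paper: use (t5) to rewrite $F_{ab}$ via times on $h_e$ and $h_{e'}$, recognize the separable structure, and invoke Axiom~(C) to obtain strict midpoint convexity of each piece. Your version is in fact more explicit---you write out the separable form $\Phi(s)-\Psi(\tau)$, resolve the signs, and treat both coordinates with four applications of~(C)---whereas the paper argues only along the $h_a$-direction (two applications of~(C), one for each of $e,e'$) and leaves the $h_b$-direction to the evident symmetry between $a$ and $b$.
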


\begin{rem}\label{rem:convexity} 1. The convexity of the function
$F_{ab}$
is a precise analog of the convexity of the distance function in 
$\CAT(-1)$ spaces, cp. Proposition~\ref{pro:functional_canonical}.

2. The convexity property depends on a parametrization up to an affine
equivalence. Here, the parametrization of
$D_{ab}$
by 
$h_a\times h_b$
is chosen because
$h_a\times h_b$
is an affine space isomorphic to
$\R\times\R$.
\end{rem}

\begin{proof}[Proof of Proposition~\ref{pro:convexity_axiom_convex}]
As usual, we assume that 
$a=(o,o')$, $b=(\om,\om')\in\ay$
are events in the strong causal relation such that the pairs
$(o,\om')$
and 
$(o',\om)$
separate each other, and 
$e=(o,\om)$, $e'=(o',\om')$.
We show that the increment of the function
$F_{ab}$
strictly increases along any line in
$h_a\times h_b=\R^2$.
To this end, it suffices to show that for any 
$x_a$, $y_a$, $z_a\in h_a$, $x_a<y_a<z_a$,
such that 
$t(x_a,y_a)=t(y_a,z_a)$
we have 
$\De F_{a,b}(z_a,y_a)>\De F_{a,b}(y_a,x_a)$,
where
$$\De F_{a,b}(y_a,x_a)=\frac{1}{2}(t(y_e,x_e')+t(y_{e'},x_{e'}')
                                  -t(x_e,x_e')-t(x_{e'},x_{e'}'))
$$
for some
$x_b'\in h_b$
which is independent of
$x_b'$
(recall that we use here parametrizations
$x\leftrightarrow x_a$ 
and 
$x'\leftrightarrow x_b'$).
Indeed, without loss of generality, we assume that
$q=(o',x,y,z,o,\om)\in\reg\cP_6$
is written in the cyclic order. Then
$t(y_e,x_e')-t(x_e,x_e')=t(y_e,x_e)$,
$t(y_{e'},x_{e'}')-t(x_{e'},x_{e'}')=-t(y_{e'},x_{e'})$,
and thus
$$\De F_{a,b}(y_a,x_a)=\frac{1}{2}(t(y_e,x_e)-t(y_{e'},x_{e'})).$$
The condition
$t(x_a,y_a)=t(y_a,z_a)$
is equivalent to
$\frac{|yo'|\cdot|xo|}{|yo|\cdot|xo'|}=\frac{|zo'|\cdot|yo|}{|zo|\cdot|yo'|}$
for any semi-metric from
$M$,
or which is the same to
$\de_{x,y,z}(o)=\de_{x,y,z}(o')$.
Axiom~(C) implies
$\de_{x,y,z}(o)>\de_{x,y,z}(\om)$.
Since
$$t(z_e,y_e)=\frac{|yo|\cdot|z\om|}{|zo|\cdot|y\om|}\quad\textrm{and}\quad
  t(y_e,x_e)=\frac{|xo|\cdot|y\om|}{|yo|\cdot|x\om|},$$
this is equivalent to
$t(z_e,y_e)>t(y_e,x_e)$.

Applying the same argument to
$q'=(o,z,y,x,o',\om')\in\reg\cP_6$,
we obtain that Axiom~(C) implies
$\de_{x,y,z}(o')>\de_{x,y,z}(\om')$,
which is equivalent to
$t(z_{e'},y_{e'})<t(y_{e'},x_{e'})$.
Therefore,
$\De F_{a,b}(z_a,y_a)>\De F_{a,b}(y_a,x_a)$,
and the strict convexity of the function
$F_{ab}$
follows.
\end{proof}

\begin{rem}\label{rem:axioms_I_C} By Proposition~\ref{pro:convexity_axiom_convex},
Axiom~C implies that the function
$F_{ab}:D_{ab}\to\R$
achieves the infimum at a unique point
$d_0'\in D_{ab}$
for any 
$a$, $b\in\ay$
in the strong causal relation because
$F_{ab}(d)\to\infty$
as
$d$
approaches to the boundary
$\d D_{ab}$
of
$D_{ab}$.
However, in general there is no reason that 
$d_0'=h_a\cap h_b$.
It seems that Axioms~(I) and (C) are independent of each other.
\end{rem}

\section{Appendix 1}
\label{sect:appendix_1}

We show that Gromov hyperbolic spaces from a large class
are boundary continuous, see sect.~\ref{subsect:boundary_continuous}.

\begin{thm}\label{thm:cat0_boundary_contiuous} Every proper Gromov
hyperbolic $\CAT(0)$ space 
$Y$
is boundary continuous.
\end{thm}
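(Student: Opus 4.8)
The plan is to reduce everything to geodesic rays issuing from a fixed basepoint $o$, where $\CAT(0)$ convexity turns the Gromov product into a monotone quantity, and then to control arbitrary sequences by these rays. First I would use properness together with the $\CAT(0)$ property: every $\xi\in\di Y$ is represented by a unique unit-speed ray $\ga_\xi$ from $o$, and if $x_i\to\xi$ then the segments $[o,x_i]$ converge to $\ga_\xi$ uniformly on compact sets (Arzel\`a--Ascoli plus uniqueness of geodesics), so the boundary topology agrees with the cone topology. This is the routine part.

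Next, along the rays the product is monotone. For two rays $\ga_\xi,\ga_\eta$ the function $r\mapsto d(\ga_\xi(r),\ga_\eta(r))$ is convex with Lipschitz constant at most $2$ (convexity of the $\CAT(0)$ metric along geodesics from the common vertex $o$), so $G(r):=(\ga_\xi(r)|\ga_\eta(r))_o=r-\tfrac12 d(\ga_\xi(r),\ga_\eta(r))$ has derivative at least $1-\tfrac12\cdot 2=0$ and is non-decreasing in $r$. For $\xi\ne\eta$ Gromov hyperbolicity bounds $G$ from above, so $G(r)$ increases to a finite limit, which I take as the definition of $(\xi|\eta)_o$; for $\xi=\eta$ the limit is $+\infty$, consistent with the semi-metric vanishing. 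It then remains to show that for arbitrary $\{x_i\}\in\xi$, $\{y_i\}\in\eta$ one has $\lim_i(x_i|y_i)_o=(\xi|\eta)_o$.

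The lower bound is clean and already uses $\CAT(0)$ in the essential way. Writing $\ga_i=[o,x_i]$, $\de_i=[o,y_i]$ and $g_i(r)=r-\tfrac12 d(\ga_i(r),\de_i(r))$, the same convexity shows $g_i$ is non-decreasing on its interval of definition, while a triangle-inequality estimate (truncating the longer geodesic at the length of the shorter one) gives $(x_i|y_i)_o\ge g_i(\min(|ox_i|,|oy_i|))$. Since for each fixed $T$ the uniform convergence of segments to rays yields $g_i(T)\to G(T)$, monotonicity of $g_i$ gives $\liminf_i(x_i|y_i)_o\ge\sup_T G(T)=(\xi|\eta)_o$.

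The main obstacle is the matching upper bound $\limsup_i(x_i|y_i)_o\le(\xi|\eta)_o$. In a general $\de$-hyperbolic space the four-point inequality only yields this up to an additive error $2\de$, and indeed without extra structure the limit can genuinely fail to exist; removing this gap is exactly where $\CAT(0)$ is indispensable. Here I would use the monotonicity of comparison angles: the comparison angle $\bar\angle_o(\ga_\xi(s),\ga_\eta(t))$ increases to the Alexandrov angle $\angle_o(\xi,\eta)$, while cone convergence forces $\angle_o(x_i,y_i)\to\angle_o(\xi,\eta)$; together these pin down the asymptotics $d(x_i,y_i)=|ox_i|+|oy_i|-2(\xi|\eta)_o+o(1)$, which is precisely the upper bound. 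Once the two bounds coincide, the limit exists and is independent of the chosen sequences, for every $o$, which is the assertion of boundary continuity. I expect the careful execution of this angle-comparison step, in particular controlling $\bar\angle_o(x_i,y_i)$ rather than merely the Alexandrov angle $\angle_o(x_i,y_i)$, to be the technical heart of the argument.
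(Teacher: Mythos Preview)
Your lower bound is correct and clean; the $\CAT(0)$ convexity of $r\mapsto d(\ga_i(r),\de_i(r))$ together with the $2$-Lipschitz bound does make $g_i$ non-decreasing, and the uniform convergence of segments to rays on compacta gives $\liminf_i(x_i|y_i)_o\ge(\xi|\eta)_o$ exactly as you say.

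The upper bound, however, has a genuine gap, and the angle route you sketch cannot close it. In a Gromov hyperbolic $\CAT(0)$ space with $\xi\ne\eta$ one has $d(\ga_\xi(r),\ga_\eta(r))=2r-2(\xi|\eta)_o+o(1)$, so the Euclidean comparison angle $\bar\angle_o(\ga_\xi(r),\ga_\eta(r))$ tends to $\pi$, not to the Alexandrov angle $\angle_o(\xi,\eta)$; the Gromov product is encoded in the \emph{rate} at which $\bar\angle$ approaches $\pi$, which is invisible to the inequality $\bar\angle_o(x_i,y_i)\ge\angle_o(x_i,y_i)$. Concretely, that inequality only yields (say for $|ox_i|=|oy_i|=R$) the bound $|x_iy_i|\ge 2R\sin(\angle_o(\xi,\eta)/2)$, hence $(x_i|y_i)_o\le R\bigl(1-\sin(\angle_o(\xi,\eta)/2)\bigr)\to\infty$ unless the angle is $\pi$. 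So your proposed mechanism gives no useful upper bound, and you correctly identify this step as the heart of the matter but do not supply an argument that works.

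The paper's proof avoids angles at $o$ altogether. It fixes a bi-infinite geodesic $\ga$ joining $\xi$ to $\xi'$ (properness and hyperbolicity), picks $p\in\ga$, and proves two estimates: first $|x_ip|+|px_i'|=|x_ix_i'|+o_i(1)$, via comparison triangles for $pq_ix_i$ and $pq_ix_i'$ where $q_i$ is the nearest point to $p$ on $x_ix_i'$ (with $|pq_i|$ bounded by hyperbolicity); second $|ox_i|-|px_i|=|o\xi(t)|-|p\eta(t)|+o_{t,i}(1)$ where $\eta(t)$ is the nearest-point projection of $\xi(t)$ onto $\ga$, using the first variation formula and the fact that $\dist(\xi(t),\ga)$ is convex, bounded, and hence has right derivative $o_t(1)$. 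Combining these gives $(x_i|x_i')_o=(\xi(t)|\xi'(t))_o+o_{t,i}(1)$ directly, furnishing both inequalities at once. The key geometric input is not the angle at $o$ but the near-additivity of distances through a point on the ideal geodesic $\xi\xi'$.
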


For 
$\CAT(-1)$ 
spaces this is established in \cite[Proposition~3.4.2]{BS1}.
Here, we extend this result to
$\CAT(0)$
spaces. A distinction between
$\CAT(-1)$
and 
$\CAT(0)$
cases relevant to arguments is that
$\dist(\ga,\ga')=\inf\set{d(s,s')}{$s\in\ga,s'\in\ga'$}=0$
for asymptotic geodesic rays 
$\ga$, $\ga'$
in the former case, while that distance is only finite in last case. 
This distinction is compensated by the following Lemma.

We use the notation
$o_t(1)$
for a quantity with
$o_t(1)\to 0$
as
$t\to\infty$.

\begin{lem}\label{lem:o(1)_difference} Let
$xyz\sub\R^2$
be a triangle with
$|yz|\le d$
for some fixed
$d>0$
and 
$|xy|,|xz|\ge t$.
Assume 
$\angle_z(x,y)$, $\angle_y(x,z)\ge\pi/2-o_t(1)$.
Then
$||xy|-|xz||=o_t(1)$.  
\end{lem}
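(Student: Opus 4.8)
The statement is a purely Euclidean estimate: a triangle $xyz \subset \R^2$ with short base $|yz| \le d$, long sides $|xy|, |xz| \ge t$, and base angles each at least $\pi/2 - o_t(1)$; the conclusion is that the two long sides differ by $o_t(1)$. The plan is to compute directly in coordinates, placing $yz$ on a convenient axis, and to exploit the fact that as $t \to \infty$ a triangle with a bounded base and nearly-right base angles is forced to become nearly isoceles. The main obstacle is bookkeeping the error terms carefully enough to see that $||xy|-|xz||$ is genuinely $o_t(1)$ and not merely bounded.

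Let me think about this more carefully.

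The statement is a purely Euclidean estimate, so the natural approach is to pick coordinates adapted to the short base $yz$ and read everything off by elementary trigonometry. The plan is to place $y=(0,0)$ and $z=(c,0)$ with $c=|yz|\le d$, and to write $x=(p,q)$ with $q>0$ after a reflection if necessary. Then $|xy|^2=p^2+q^2$ and $|xz|^2=(p-c)^2+q^2$, so the difference of squares factors cleanly as
$$|xy|^2-|xz|^2=c(2p-c),$$
and therefore $||xy|-|xz||=c\,|2p-c|/(|xy|+|xz|)$. The whole task thus reduces to bounding $|2p-c|$ from above and $|xy|+|xz|$ from below.

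Next I would translate the angle hypotheses. A direct computation of the two base angles gives $\cos\angle_y(x,z)=p/|xy|$ and $\cos\angle_z(x,y)=(c-p)/|xz|$. Since the hypothesis $\angle\ge\pi/2-o_t(1)$ forces $\cos\angle\le\sin(o_t(1))=o_t(1)$, writing $\ep=o_t(1)$ I obtain $p\le\ep|xy|$ and $c-p\le\ep|xz|$. Substituting $|xy|=\sqrt{p^2+q^2}$ and $|xz|=\sqrt{(c-p)^2+q^2}$ and solving these (for $\ep<1$) yields $p\le\ep'q$ and $c-p\le\ep'q$ with $\ep'=\ep/\sqrt{1-\ep^2}=o_t(1)$; in particular $c-\ep'q\le p\le\ep'q$, so that $|2p-c|\le d+2\ep'q$. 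The same bound $p\le\ep'q$ also gives $q\le|xy|\le q\sqrt{1+\ep'^2}$, whence $q\ge|xy|/\sqrt{1+\ep'^2}\ge t/2$ once $t$ is large.

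Combining these, $||xy|-|xz||\le d(d+2\ep'q)/(|xy|+|xz|)\le d(d+2\ep'q)/(2q)=d^2/(2q)+d\ep'$, and since $q\ge t/2$ and $\ep'=o_t(1)$, both summands are $o_t(1)$, which is the claim. I expect the only delicate point to be the error-term bookkeeping: one must verify that the two angle constraints together pin $x$ essentially \emph{above} the segment $yz$, so that $q$ is comparable to the long sides and hence to $t$, and then check that the residual discrepancy $|2p-c|$, multiplied by the bounded factor $c\le d$ and divided by a length $\gtrsim t$, genuinely produces an $o_t(1)$ quantity rather than merely a bounded one. No deeper geometric input beyond this elementary computation is required.
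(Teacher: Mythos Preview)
Your coordinate computation is correct and complete. One small wording slip: when you write ``the same bound $p\le\ep'q$ also gives $q\le|xy|\le q\sqrt{1+\ep'^2}$,'' you actually need $|p|\le\ep'q$, not just $p\le\ep'q$. But this follows immediately from your two one-sided bounds: $p\le\ep'q$ and $p\ge c-\ep'q\ge -\ep'q$ (since $c\ge 0$), so $|p|\le\ep'q$ and the rest goes through.

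The paper's own proof is just a one-line hint: ``follows from the convexity of the distance function on $\R^2$ and the first variation formula.'' Unpacked, that argument runs as follows. Parametrize the segment $yz$ by unit speed, $\gamma(s)$, $s\in[0,c]$, and set $f(s)=|x\gamma(s)|$. Then $f$ is convex, and by first variation $f'(0)=-\cos\angle_y(x,z)$ and $f'(c)=\cos\angle_z(x,y)$. The hypothesis gives $f'(0)\ge -o_t(1)$ and $f'(c)\le o_t(1)$; convexity forces $f'$ to stay in $[f'(0),f'(c)]$, hence $|f'|\le o_t(1)$ throughout, and $||xy|-|xz||=|f(c)-f(0)|\le c\cdot o_t(1)\le d\cdot o_t(1)$. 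Compared to your approach, this is shorter and avoids coordinates entirely; the convexity automatically handles the two-sided control of the derivative that you obtained by combining the two angle inequalities. Your route, on the other hand, is fully explicit and makes the dependence on $d$ and $t$ quantitative ($d^2/(2q)+d\ep'$), which the convexity sketch does not.
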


\begin{proof} The required estimate follows from the convexity of the distance function on
$\R^2$
and the first variation formula. We leave details to the reader.
\end{proof}

Recall that in a geodesic metric space, the Gromov product is {\em monotone} 
in the following sense, see e.g. \cite[Lemma~2.1.1]{BS1}.

\begin{lem}\label{lem:gromov_product_monotone} Let
$Y$
be a geodesic metric space, 
$xyz\sub Y$
a geodesic triangle. Then for any 
$y'\in xy$, $u\in yz$
we have
$$(y'|z)_x\le (y|z)_x\le\min\{(y|u)_x,(u|z)_x\}.$$
\end{lem}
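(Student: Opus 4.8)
The plan is to unravel the definition of the Gromov product $(a|b)_x=\frac{1}{2}(|xa|+|xb|-|ab|)$ and reduce each of the three claimed inequalities to a single application of the triangle inequality, using the additivity of distance along a geodesic. Concretely, since $y'\in xy$ we have $|xy|=|xy'|+|y'y|$, and since $u\in yz$ we have $|yz|=|yu|+|uz|$; these additivity relations are the only special features of the configuration that enter the argument.

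For the left inequality $(y'|z)_x\le(y|z)_x$, I would expand both sides, cancel the common term $|xz|$, and substitute $|xy|=|xy'|+|y'y|$. The inequality then becomes equivalent to $|yz|-|y'z|\le|y'y|$, which is precisely the triangle inequality $|yz|\le|y'y|+|y'z|$ for the triple $y',y,z$. For the right inequality I would treat the two bounds separately: cancelling $|xy|$ and inserting $|yz|=|yu|+|uz|$ reduces $(y|z)_x\le(y|u)_x$ to $|xz|-|xu|\le|uz|$, the triangle inequality for $x,u,z$; symmetrically, cancelling $|xz|$ and using the same additivity reduces $(y|z)_x\le(u|z)_x$ to $|xy|-|xu|\le|yu|$, the triangle inequality for $x,y,u$.

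All three reductions are exact equivalences, so there is no genuine obstacle here; the only care required is bookkeeping, namely matching each reduced inequality with the correct triangle among the points $x,y,z,y',u$. I would emphasize that the argument is purely metric: it never invokes hyperbolicity or thinness of geodesic triangles, using only that an intermediate point on a geodesic splits its length additively. Hence the monotonicity holds in every geodesic metric space exactly as stated, and the special role of $x$ as the basepoint is inessential beyond fixing which two cancellations to perform.
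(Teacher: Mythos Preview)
Your argument is correct and is essentially the same as the paper's: expand the Gromov products, cancel the common term, use additivity along the relevant geodesic side, and reduce to a single triangle inequality. The paper carries this out explicitly for the left inequality and then says ``a similar argument using $|yz|=|yu|+|uz|$'' for the right-hand side, which is exactly what you have written out in full.
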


\begin{proof} The left hand side inequality is equivalent to
$|y'x|-|y'z|\le|yx|-|yz|$,
which follows from the triangle inequality
$|yz|\le|yy'|+|y'z|$
because
$|yx|-|y'x|=|yy'|$.
A similar argument using
$|yz|=|yu|+|uz|$
proves the right hand side inequality.
\end{proof}

All necessarily information about 
$\CAT(0)$
spaces like definition of angles, the triangle inequality for angles,
the comparison of angles, the first variation formula etc used in the proof below can be 
found in \cite{BH}.

\begin{proof}[Proof of Theorem~\ref{thm:cat0_boundary_contiuous}]
Given
$o\in Y$, $\xi$, $\xi'\in\di Y$,
we have to show that for any sequences
$\{x_i\}\in\xi$, $\{x_i'\}\in\xi'$
there is a limit
$\lim_i(x_i|x_i')_o$.
We can assume that
$\xi\neq\xi'$
because otherwise there is nothing to prove.

We use the notation
$\xi=\xi(t)$
for the unit speed parametrization of the geodesic ray 
$o\xi$
with 
$\xi(0)=o$.
By monotonicity of the Gromov product, see Lemma~\ref{lem:gromov_product_monotone}, 
there is a limit
$$a=\lim_{t\to\infty}(\xi(t)|\xi'(t))_o.$$
We have
$a<\infty$
because
$Y$
is hyperbolic and 
$\xi\neq\xi'$,
which implies that the geodesic segment
$\xi(t)\xi'(t)$
stays at uniformly in
$t$
bounded distance from
$o$.
Since
$Y$
is proper, the segments
$\xi(t)\xi'(t)$
subconverge in the compact-open topology as
$t\to\infty$
to a geodesic
$\ga\sub Y$
with the end points
$\xi$, $\xi'$
at infinity.

(1) We fix 
$p\in\ga$
and show that 
$|x_ip|+|px_i'|=|x_ix_i'|+o_i(1)$.
The geodesic segments
$px_i$, $px_i'$
converge to subrays
$p\xi$, $p\xi'\sub\ga$
respectively in the compact-open topology as 
$i\to\infty$.
It follows that the angle
$\angle_p(x_i,x_i')\ge\pi-o_i(1)$.

Let 
$q_i\in x_ix_i'$
be the point closest to
$p$.
By hyperbolicity of 
$Y$
we have
$|pq_i|=\dist(p,x_ix_i')\le d$
for some 
$d>0$
and all 
$i$.
For the triangles
$\De_i=pq_ix_i$, $\De_i'=pq_ix_i'$
we have 
$\angle_{q_i}(p,x_i)$, $\angle_{q_i}(p,x_i')\ge\pi/2$,
and 
$\angle_p(x_i,q_i)+\angle_p(q_i,x_i')\ge\angle_p(x_i,x_i')\ge\pi-o_i(1)$.

Using the comparison of angles for 
$\CAT(0)$
spaces, we see that the comparison triangles
$\wt\De_i=\wt p\wt q_i\wt x_i$, $\wt\De_i'=\wt p\wt q_i\wt x_i'\sub\R^2$
have angles
$\ge\pi/2$
at
$\wt q_i$,
and
$\angle_{\wt p}(\wt x_i,\wt q_i)\ge\angle_p(x_i,q_i)$,
$\angle_{\wt p}(\wt q_i,\wt x_i')\ge\angle_p(q_i,x_i')$.
Thus
$\angle_{\wt p}(\wt x_i,\wt q_i)$, $\angle_{\wt p}(\wt q_i,\wt x_i')<\pi/2$,
and we obtain
$$\pi-o_i(1)\le\angle_{\wt p}(\wt x_i,\wt q_i)+\angle_{\wt p}(\wt q_i,\wt x_i')<\pi.$$
Hence,
$\angle_{\wt p}(\wt x_i,\wt q_i)$, $\angle_{\wt p}(\wt q_i,\wt x_i')\ge\pi/2-o_i(1)$.
Using that 
$|\wt p\wt q_i|\le d$,
we can apply Lemma~\ref{lem:o(1)_difference} and conclude that
$|\wt x_i\wt p|=|\wt x_i\wt q_i|+o_i(1)$,
$|\wt p\wt x_i'|=|\wt q_i\wt x_i'|+o_i(1)$.
Therefore
$|x_ip|+|px_i'|=|x_ix_i'|+o_i(1)$.

(2) By hyperbolicity of
$Y$,
there are points
$u\in o\xi$, $u'\in o\xi'$, $v_t\in\xi(t)\xi'(t)$
with mutual distances bounded above independent of
$t$.
Thus
$\angle_{\xi(t)}(o,\xi'(t))=\angle_{\xi(t)}(o,v_t)=o_t(1)$, 
$\angle_{\xi'(t)}(o,\xi(t))=\angle_{\xi'(t)}(o,v_t)=o_t(1)$,
that is, the segment
$ov_t$
is observed from
$\xi(t)$
and 
$\xi'(t)$
under arbitrarily small angles as
$t\to\infty$.

(3) Let
$\eta(t)$, $\eta'(t)\in\ga$
be points closest to
$\xi(t)$, $\xi'(t)$
respectively. Since the geodesic
$\ga$
is convex as a set in
$Y$,
we have 
$|\eta(t)\eta'(t)|\le|\xi(t)\xi'(t)|$.
Our next goal is to show that 
$|\xi(t)\xi'(t)|\le|\eta(t)\eta'(t)|+o_t(1)$.

Since the geodesic rays
$o\xi$, $p\xi$
are asymptotic, the distance
$\dist(\xi(t),\ga)$
is uniformly bounded above. 
Using convexity of the distance function on
$Y$,
we conclude that
$g(t)=\dist(\xi(t),\ga)$
and similarly
$g'(t)=\dist(\xi'(t),\ga)$
decrease as
$t\to\infty$.
Then for 
$t'>t$
we have 
$g(t')\le g(t)\le|\xi(t)\eta(t')|$
and similarly
$g'(t')\le g'(t)\le|\xi'(t)\eta'(t')|$.
The first variation formula for 
$\CAT(0)$
spaces, see \cite[Corollary~3.6]{BH}, implies that
$\angle_{\xi(t)}(\eta(t),o)$, $\angle_{\xi'(t)}(\eta'(t),o)\ge\pi/2$
for all 
$t>0$.
Combining that with the estimates from (2) for the angles
$\angle_{\xi(t)}(o,\xi'(t))$, $\angle_{\xi'(t)}(o,\xi(t))=o_t(1)$,
we conclude that 
$\angle_{\xi(t)}(\eta(t),\xi'(t))$, $\angle_{\xi'(t)}(\eta'(t),\xi(t))\ge\pi/2-o_t(1)$.
Therefore, all the angles of the quadrilateral
$\eta(t)\xi(t)\xi'(t)\eta'(t)$
are at least
$\pi/2-o_t(1)$.
We also note that
$g(t)=|\xi(t)\eta(t)|$
and 
$g'(t)=|\xi(t)\eta(t)|\le c$
for all 
$t\ge 0$
and some 
$c>0$
independent of
$t$.

Let
$x(t)y(t)u(t)$, $y(t)z(t)u(t)$
be comparison triangles in
$\R^2$
with vertices
$x(t)$, $z(t)$
separated by the common side 
$y(t)u(t)$
for triangles
$\eta(t)\xi(t)\eta'(t)$, $\xi(t)\xi'(t)\eta'(t)$
in 
$Y$
respectively. Using the comparison of angles in
$\CAT(0)$
spaces and the triangle inequality for angles, we obtain that all 
the angles of the quadrilateral
$x(t)y(t)z(t)u(t)\sub\R^2$
are at least
$\pi/2-o_t(1)$.
Since 
$|x(t)y(t)|$, $|z(t)u(t)|\le c$,
we have 
$\angle_{y(t)}(z(t),u(t))$, $\angle_{u(t)}(x(t),y(t))=o_t(1)$.
Thus
$\angle_{y(t)}(x(t),u(t))$, $\angle_{u(t)}(z(t),y(t))\ge\pi/2-o_t(1)$.
By Lemma~\ref{lem:o(1)_difference},
$|y(t)z(t)|$, $|x(t)u(t)|=|y(t)u(t)|+o_t(1)$,
hence
$|\xi(t)\xi'(t)|\le|\eta(t)\eta'(t)|+o_t(1)$.

(4) Now, we show that 
$\al(t)$, $\al'(t)\ge\pi/2-o_t(1)$,
where
$\al(t)=\angle_{\xi(t)}(\eta(t),\xi)$,
$\al'(t)=\angle_{\xi'(t)}(\eta'(t),\xi')$.
For brevity, we only prove this estimate for the angles
$\al(t)$.

By the first variation formula, we have
$|\xi(t+s)\eta(t)|=|\xi(t)\eta(t)|-s\cos\al(t)+o(s)$
for all sufficiently small
$s\ge 0$.
On the other hand, the function
$g=g(t)$
is convex. Thus it has at every point the right derivative
$d_+ g/dt$,
which is non decreasing. It is nonpositive because
$g(t)$
decreases. Thus
$-d_+ g(t)/dt=o_t(1)$.
Using that 
$g(t+s)\le|\xi(t+s)\eta(t)|$
for every
$s\ge 0$,
we obtain
$$g(t)-s\cos\al(t)+o(s)=|\xi(t+s)\eta(t)|\ge g(t+s)\ge g(t)+s\cdot d_+g(t)/dt$$
for all sufficiently small
$s>0$,
hence
$\cos\al(t)\le-d_+g(t)/dt=o_t(1)$,
and therefore
$\al(t)\ge \pi/2-o_t(1)$.

(5) We show that
$|\xi(t)x_i|=|\eta(t)x_i|+o_{t,i}(1)$
for every sufficiently large fixed
$t$,
and similarly
$|\xi'(t)x_i'|=|\eta'(t)x_i'|+o_{t,i}(1)$.
The geodesic segments
$\xi(t)x_i$, $\eta(t)x_i$
converge in the compact-open topology to subrays
$\xi(t)\xi$, $\eta(t)\xi$
respectively as
$i\to\infty$.
Thus
$\angle_{\xi(t)}(\eta(t),x_i)\ge\al(t)-o_i(1)$
and
$\angle_{\eta(t)}(\xi(t),x_i)\ge\be(t)-o_i(1)$,
where
$\be(t)=\angle_{\eta(t)}(\xi(t),\xi)\ge\pi/2$.
Using (4) and the comparison of angles, we obtain that the angles at
$x$, $y$
of the comparison triangle
$xyz\sub\R^2$
for 
$\xi(t)\eta(t)x_i$
are 
$\ge\pi/2-o_{t,i}(1)$.
By Lemma~\ref{lem:o(1)_difference},
$|\xi(t)x_i|=|\eta(t)x_i|+o_{t,i}(1)$.

(6) Since the geodesic segments
$ox_i$
converge to the ray 
$o\xi$,
we have
$|ox_i|=|o\xi(t)|+|\xi(t)x_i|-o_{t,i}(1)$
for every fixed
$t>0$
and all sufficiently large
$i$.
Similarly,
$|px_i|=|p\eta(t)|+|\eta(t)x_i|-o_{t,i}(1)$.
By (5),
$|ox_i|-|px_i|=|o\xi(t)|-|p\eta(t)|+o_{t,i}(1)$.
Using (1), (3) and 
$|\eta(t)p|+|p\eta'(t)|=|\eta(t)\eta'(t)|$,
we finally obtain
$(x_i|x_i')_o=(\xi(t)|\xi'(t))_o+o_{t,i}(1)$.
Hence
$\lim_i(x_i|x_i')_o=a$.
\end{proof}

\begin{cor}\label{cor:zero_gromov_product} In a proper
Gromov hyperbolic
$\CAT(0)$
space
$Y$,
we have
$(\xi|\xi')_o=0$
if and only if
$\angle_o(\xi,\xi')=\pi$
for  
$o\in Y$, $\xi$, $\xi'\in\di Y$.
\end{cor}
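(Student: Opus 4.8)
The plan is to work along the unit-speed geodesic rays $\xi(t)$, $\xi'(t)$ issuing from $o$, exactly as in the proof of Theorem~\ref{thm:cat0_boundary_contiuous}. Then $|o\xi(t)|=|o\xi'(t)|=t$, so $(\xi(t)|\xi'(t))_o=t-\frac12|\xi(t)\xi'(t)|$, and since $\{\xi(i)\}\in\xi$, $\{\xi'(i)\}\in\xi'$, boundary continuity gives $(\xi|\xi')_o=\lim_{t\to\infty}(\xi(t)|\xi'(t))_o$. I would reformulate the angle condition through comparison triangles: writing $\wt\angle_o(\xi(s),\xi'(t))$ for the Euclidean comparison angle of the triangle $o\,\xi(s)\,\xi'(t)$, one has $\wt\angle_o(\xi(s),\xi'(t))=\pi$ exactly when the comparison triangle degenerates, i.e. when $|\xi(s)\xi'(t)|=s+t$. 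In a $\CAT(0)$ space the Alexandrov angle $\angle_o(\xi,\xi')$ is the infimum of the comparison angles $\wt\angle_o(\xi(s),\xi'(t))$; as each is at most $\pi$, this yields $\angle_o(\xi,\xi')=\pi$ if and only if $|\xi(s)\xi'(t)|=s+t$ for all $s,t>0$. This reformulation settles one direction at once: if $\angle_o(\xi,\xi')=\pi$, then in particular $|\xi(t)\xi'(t)|=2t$, so $(\xi(t)|\xi'(t))_o=0$ for every $t$ and hence $(\xi|\xi')_o=0$.

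For the converse I would first show that $f(t)=(\xi(t)|\xi'(t))_o$ is nonnegative and non-decreasing. Nonnegativity is the triangle inequality. Monotonicity I would obtain from Lemma~\ref{lem:gromov_product_monotone} applied twice: for $t'\le t$, shrinking the side $[o,\xi(t)]$ to its subpoint $\xi(t')$ gives $(\xi(t')|\xi'(t))_o\le(\xi(t)|\xi'(t))_o$, and then shrinking $[o,\xi'(t)]$ to $\xi'(t')$ gives $(\xi(t')|\xi'(t'))_o\le(\xi(t')|\xi'(t))_o$; combining, $f(t')\le f(t)$. Consequently $(\xi|\xi')_o=\lim_{t\to\infty}f(t)=\sup_t f(t)$, so the hypothesis $(\xi|\xi')_o=0$ forces $f\equiv 0$, that is $|\xi(t)\xi'(t)|=2t$ for all $t$.

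It then remains to upgrade this diagonal identity to $|\xi(s)\xi'(t)|=s+t$ for arbitrary $s,t$. Given $s,t$, set $u=\max\{s,t\}$; from $|\xi(u)\xi'(u)|=2u=|o\xi(u)|+|o\xi'(u)|$ and the unique geodesics of $\CAT(0)$, the point $o$ lies on the geodesic $[\xi(u),\xi'(u)]$, which thus coincides with the concatenation of the two rays up to parameter $u$. Hence $\xi(s)$ and $\xi'(t)$ lie on a single geodesic on opposite sides of $o$, so $|\xi(s)\xi'(t)|=s+t$, giving $\angle_o(\xi,\xi')=\pi$. I expect the main obstacle to be the converse, and within it the monotonicity of $f$ via Lemma~\ref{lem:gromov_product_monotone}; once that is established, the $\CAT(0)$ rigidity (equality in the triangle inequality forces $o$ to be collinear) closes the argument cleanly.
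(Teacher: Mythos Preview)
Your proof is correct and uses the same two ingredients as the paper's proof: boundary continuity (Theorem~\ref{thm:cat0_boundary_contiuous}) and the monotonicity of the Gromov product (Lemma~\ref{lem:gromov_product_monotone}). The only difference is presentational: the paper argues the converse by contrapositive (if $\angle_o(\xi,\xi')<\pi$ then points close to $o$ already have $(x|x')_o>0$, and monotonicity pushes this to infinity), whereas you argue it directly, which obliges you to add the collinearity upgrade from the diagonal identity $|\xi(t)\xi'(t)|=2t$ to arbitrary $s,t$; this extra step is correct but unnecessary, since in $\CAT(0)$ the Alexandrov angle is already the limit along the diagonal.
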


\begin{proof} If
$\angle_o(\xi,\xi')=\pi$,
then
$|xo|+|ox'|=|xx'|$,
and 
$(x|x')_o=0$
for every
$x\in o\xi$, $x'\in o\xi'$.
By Theorem~\ref{thm:cat0_boundary_contiuous},
$(\xi|\xi')_o=0$.
 
Conversely, assume that 
$\angle_o(\xi,\xi')<\pi$.
Then for 
$x\in o\xi$, $x'\in o\xi'$
sufficiently close to
$o$,
we have
$|xo|+|ox'|>|xx'|$,
and thus 
$(x|x')_o>0$.
By monotonicity of the Gromov product and Theorem~\ref{thm:cat0_boundary_contiuous},
$(\xi|\xi')_o\ge (x|x')_o>0$.
\end{proof}

\section{Appendix 2}
\label{sect:appendix_2}

\begin{center} {\Large Viktor Schroeder}
 
\end{center}

Here, it will be shown that Axiom~(t6) follows from the other axioms of 
timed causal spaces. That is, we assume Axioms (h1)--(h6) and (t1)--(t5)
but not (t6) and show that (t6) follows. Given an event
$e=(\al,\be)$
we have a reflection
$\rho=\rho_e:S^1\to S^1$
fixing
$\al$, $\be$.
The M\"obius structure
$M$
is obtained in Theorem~\ref{thm:submoeb_is_moeb} without using (t6).
It gives another timelike line structure
$\cH_M$
and hence for 
$e$
another reflection
$\tau=\tau_e:S^1\to S^1$.
Choose
$x$, $y$
in the same component of
$S^1\sm\{\al,\be\}$
in the order
$\al xy\be$.
We use notation
$[\ ,\ ,\ ,\ ]$
for the cross-ratio
$\crr_3$,
$$[x,y,z,u]:=\frac{|xy||zu|}{|xz||yu|}.$$
Then we have
\begin{equation}\label{eq:tau_equality}
[\al,x,\tau(x),\be]=[\al,y,\tau(y),\be]=1. 
\end{equation}
This cross-ratio satisfies the {\em cocycle property}
$$[\al,x,y,\be][\al,y,z,\be]=[\al,x,z,\be]$$
for any 
$x$, $y$, $z$.
Axiom~(t6) is not used in the proof of Lemma~\ref{lem:pro:timed_monotone:axiom_m}.
By that Lemma, the time of the timed causal space is computed in the usual way via
$M$-cross-ratios. Thus we have
$$\ln[\al,x,y,\be]=-t((x,\rho(x)),(y,\rho(y)))=\ln[\al,\rho(x),\rho(y),\be],$$
and we have by the cocycle property and (\ref{eq:tau_equality})
$[\al,x,y,\be]=[\al,\tau(x),\tau(y),\be]$.
Thus
$$[\al,\rho(x),\tau(x),\be][\al,\tau(x),\rho(y),\be]=[\al,\rho(x),\rho(y),\be]$$
equals
$$[\al,\tau(x),\rho(y),\be][\al,\rho(y),\tau(y),\be]=[\al,\tau(x),\tau(y),\be].$$
Thus 
$[\al,\rho(x),\tau(x),\be]$
is constant for 
$x$
in a connected component of
$S^1\sm\{\al,\be\}$.
In order to prove the result, we have to show that 
$[\al,\rho(x),\tau(x),\be]=1$.
Then be monotonicity
$\rho(x)=\tau(x)$,
and we have (t6).

Now
$[\al,\rho(x),\tau(x),\be]=[\al,\rho(x),x,\be]$
since 
$[\al,\tau(x),x,\be]=1$
and, hence, also
\begin{equation}\label{eq:rho_constant}
[\al,x,\rho(x),\be]\quad\textrm{is constant in}\ x. 
\end{equation}

\begin{figure}[htbp]
\centering
\psfrag{1}{$1$}
\psfrag{2}{$2$}
\psfrag{3}{$3$}
\psfrag{4}{$4$}
\psfrag{5}{$5$}
\psfrag{6}{$6$}
\psfrag{7}{$7$}
\psfrag{8}{$8$}
\psfrag{9}{$9$}
\psfrag{10}{$10$}
\includegraphics[width=0.6\columnwidth]{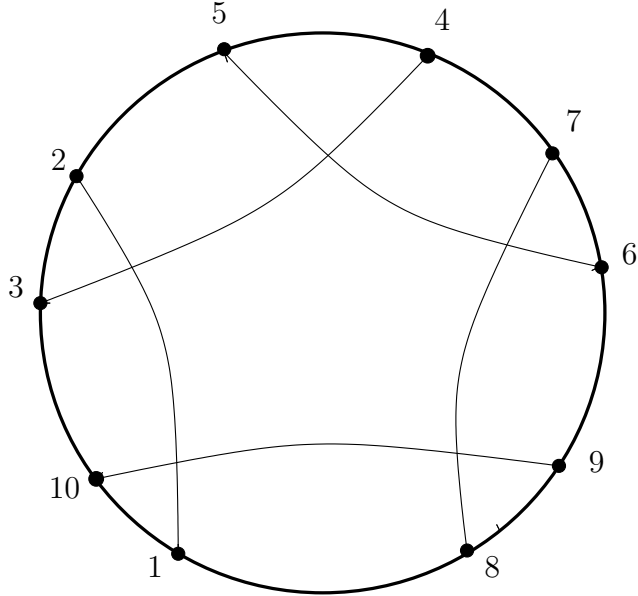}
\caption{Pentagon $P$}\label{fi:fivegone}
\end{figure}

Now, construct a pentagon
$P=x_1x_2,x_2x_3,\dots,x_9x_{10}$
of consecutively ``orthogonal'' timelike lines, i.e.,
$\rho_{x_i,x_{i+1}}(x_{i+2})=x_{i+3}$
for 
$i=1,\dots,9$,
where indexes are taken modulo 10 (existence of
$P$
easily follows from Proposition~\ref{pro:timelike_lines}(b)). Then
(\ref{eq:rho_constant}) implies (we use 
$[i,j,k,l]=[x_i,x_j,x_k,x_l])$
\begin{align*}
 [1,3,4,2]&=[6,3,4,5]\\
          &=[6,8,7,5]\\
          &=[9,8,7,10]\\
          &=[9,1,2,10]\\
          &=[4,1,2,3]=[1,4,3,2]=1/[1,3,4,2],
\end{align*}
hence
$[1,3,4,2]=1$.

\bigskip
\begin{tabbing}

Sergei Buyalo\\

St. Petersburg Dept. of Steklov Math. Institute RAS,\\ 
Fontanka 27, 191023 St. Petersburg, Russia\\
{\tt sbuyalo@pdmi.ras.ru}\\

\end{tabbing}

\end{document}